\newtheorem{theorem}{Theorem}[section]
\newtheorem{lemma}[theorem]{Lemma}
\newtheorem*{thmA}{Theorem A}
\newtheorem*{thmB}{Theorem B}
\newtheorem*{thmC}{Theorem C}
\newtheorem*{thmD}{Theorem D}
\newtheorem*{thmE}{Theorem E}
\def\irr#1{{\rm Irr} (#1)}
\def\nor{\unlhd\,}
\def\syl#1#2{{\rm Syl}_{#1}(#2)}
\def\cent#1#2{{\bf C}_{#1}(#2)}
\newcommand{\zent}[1]{{\bf Z} (#1)}
\newcommand{\oh}[2]{{\rm O}_{#1} (#2)}
\newcommand{\Oh}[2]{{\rm O}^{#1} (#2)}
\newcommand{\cd}[1]{{\rm cd} (#1)}
\newcommand{\V}[1]{{\rm V} (#1)}
\newcommand{\sbs}{\leq}
\newcommand{\hyp}[2]{(#1,#2)-{\rm hypothesis}}
\newcommand{\subdash}[2]{#1_{{#2}'}}
\newcommand{\sps}{\geq}
\newcommand{\lin}[1]{{\rm Lin} (#1)}
\newcommand{\cdover}[3]{{\rm cd}_{#3} (#1 \mid #2)}
\newcommand{\scr}[1]{{\cal {#1}}}
\newcommand{\cdsub}[2]{{\rm cd}_{#1} (#2)}
\newcommand{\cdsup}[2]{{\rm cd}^{#1} (#2)}
\newcommand{\snor}{{\unrhd\unrhd}}
\newcommand{\st}{\mid}
\newcommand{\ibr}[1]{{\rm IBr} (#1)}
\newcommand{\aut}[1]{{\rm Aut} (#1)}
\newcommand{\cha}[1]{{\rm Char} (#1)}
\begin{document}

\title{Solvable groups satisfying the two-prime hypothesis II}

\author{James Hamblin \\
        Shippensburg University \\
        Shippensburg, PA 17257 \\
        e-mail: JEHamb@ship.edu \\
        \\
        Mark L. Lewis \\
        Department of Mathematical Sciences \\
        Kent State University \\
        Kent, OH 44242 \\
        e-mail: lewis@math.kent.edu}

\maketitle
\vfill

\begin{abstract}
In this paper, we consider solvable groups that satisfy the two-prime hypothesis.  We prove that if $G$ is such a group and $G$ has no nonabelian nilpotent quotients, then $|\cd G| \le 462,\!515$.  Combining this result with the result from part I, we deduce that if $G$ is any such group, then the same bound holds.

Keywords: character degrees, Finite groups, Solvable groups.

MSC[2010]: 20C15
\end{abstract}

\eject

\section{Introduction}

One general category of problems in the character theory of finite groups involves investigating how the structure of the set of degrees of irreducible characters, denoted $\cd G$, influences the structure of the group.

A well-known result states that if $G$ is solvable and the elements of $\cd G$ are pairwise coprime, then $G$ has at most three distinct character degrees.  There are many possible extensions to this problem.  One was investigated by the second author in \cite{onep}.  He assumed that the group $G$ satisfies the ``one-prime hypothesis,'' that is, if you choose $a,b \in \cd G$ distinct, then $\gcd (a,b)$ is either 1 or a prime.

If $n$ is a positive integer, we will write $\omega (n)$ to denote the total number of prime divisors of $n$, counting multiplicity. As an abbreviation, we will write $\omega (a,b)$ to mean $\omega (\gcd (a,b))$.  Thus, the one-prime hypothesis can be stated as: if $a,b \in \cd G$ with $\omega (a,b) > 1$, then $a = b$.  Under this hypothesis, the second author proved in \cite{numone} that for a solvable group $|\cd G| \leq 9$.

We can state the ``$n$-prime hypothesis'' in a similar way: if $a,b \in \cd G$ with $\omega(a,b) > n$, then $a=b$.  More generally, we say that a set of positive integers $X$ satisfies the $n$-prime hypothesis if for all $a,b \in \cd G$ with $\omega(a,b) > n$, we have $a=b$.  One hopes that for $G$ solvable and satisfying the $n$-prime hypothesis, $|\cd G| \leq f(n)$ for some function $f$.  At this time, we are not able to prove the existence of such a function for all $n$, but we can prove that a bound exists when $n = 2$.

The main theorem of this paper is the following:

\begin{thmA}
Let $G$ be a solvable group satisfying the two-prime hypothesis.  Assume $G$ does not have a nonabelian nilpotent factor group.  Then $|\cd G| \leq 462,\!515$.
\end{thmA}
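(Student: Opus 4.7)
The plan is to combine a bound on $|\rho(G)|$, the set of primes dividing some character degree, with a bound on the number of degrees divisible by each such prime, then sum. First I would record the basic consequences of the two-prime hypothesis for the prime graph on $\rho(G)$: distinct degrees $a,b\in\cd G$ satisfy $\omega(a,b)\le 2$, which severely restricts how the prime supports of different degrees can overlap. Combined with the classical fact that the prime graph of a solvable group has at most two connected components, each of diameter at most three, this should yield a bound $|\rho(G)|\le c_1$ for some small absolute constant.

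Next I would exploit the assumption that $G$ has no nonabelian nilpotent factor group. This is equivalent to the statement that for every prime $p$, the largest $p$-group quotient $G/\Oh{p}{G}$ is abelian. Together with solvability this constrains the structure of $G$ above its Fitting subgroup and, more importantly, it forbids certain ``stacking'' of $p$-parts in character degrees that would otherwise force the existence of a nonabelian $p$-quotient. Consequently the multiplicity with which any fixed prime $p$ can appear in a degree of $G$ is controlled, and the possible $p$-parts of elements of $\cd G$ come from a finite family whose size depends only on the constants produced in step one.

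With these reductions in hand, the main step is to bound $|\cdsub{p}{G}|$, the number of degrees divisible by $p$, for each $p\in\rho(G)$. Here I would use the two-prime hypothesis a second time: for two distinct degrees $a,b\in\cdsub{p}{G}$ the prime $p$ already absorbs part of the two allowed common prime factors, so the remaining ``off-$p$'' portions of $a$ and $b$ are nearly coprime. Combined with the control on prime-power multiplicities from step two, this should give an explicit bound on $|\cdsub{p}{G}|$ in terms of $|\rho(G)|$. Summing $|\cdsub{p}{G}|$ over $p\in\rho(G)$, plus one for the degree $1$, then yields the overall bound on $|\cd G|$.

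The main obstacle will be making the per-prime bound $|\cdsub{p}{G}|$ sharp enough to reach the explicit constant $462{,}515$. A crude estimate will grow far too quickly; the peculiar shape of this constant suggests that the proof must split into many cases based on the isomorphism type of the prime graph of $G$, the distribution of prime supports among the various degrees, and the Fitting quotient structure provided by the no-nonabelian-nilpotent-quotient hypothesis. Optimising over all of these configurations, and carefully arranging the induction so that the constants do not compound unnecessarily across the case split, is what I expect to be the most technically demanding portion of the argument.
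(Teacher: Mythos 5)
The proposal's opening step---deriving a bound $|\rho(G)|\le c_1$ from the two-prime hypothesis together with the fact that the prime graph of a solvable group has at most two components of bounded diameter---does not go through. A bound on the number of connected components and on their diameters places no bound on the number of vertices. For a concrete illustration, a degree set of the shape $\{1,\,p_1^2p_2,\,p_1^2p_3,\dots,p_1^2p_n\}$ satisfies the two-prime hypothesis (any two distinct nontrivial degrees have $\gcd=p_1^2$, so $\omega=2$) and its prime graph is a single star centred at $p_1$, with connected diameter $2$; nothing in the graph-theoretic constraints caps $n$. Similarly, the assertion that the no-nonabelian-nilpotent-quotient condition controls the multiplicity with which a prime can occur in a degree is false as stated: a Frobenius group $G$ with abelian kernel and cyclic complement of order $p^k$ has $\cd G=\{1,p^k\}$, has no nonabelian nilpotent quotient, and yet $p$ appears with multiplicity $k$. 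So both pillars of your step-one-plus-step-two decomposition are missing, and the sum-over-primes count cannot be launched.

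The paper's actual use of the no-nonabelian-nilpotent-quotient hypothesis is quite different and much more structural. Choosing $K\nor G$ maximal with $G/K$ nonabelian, the Two-Case Theorem says $G/K$ is a $p$-group or a Frobenius group with elementary abelian kernel $N/K$ and cyclic complement $G/N$; the hypothesis rules out the $p$-group alternative, so $G/K$ is Frobenius. The remainder of the argument splits on whether $|\pi(G:N)|$ is large ($\ge 8$) or small, and on whether $K$ is abelian or nonabelian, and within each branch uses the $(n,\pi)$-hypothesis machinery from part I (Theorems D and E, and Theorem 6.3), Clifford theory, Gallagher's theorem, and a nilpotency theorem (Theorem \ref{7.7}) applied to normal subgroups sandwiched between $K$ and $N$. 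The bound $462{,}515$ is assembled by adding up counts of degrees lying over various normal subgroups, prime by prime only at the innermost level and always subordinate to the Frobenius-quotient structure. Your proposal never produces that structural data and, as written, rests on a graph-theoretic bound that simply is not there; patching it would essentially require rediscovering the Frobenius-quotient reduction.
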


This note contains work from the first author's dissertation at the University of Wisconsin.  The first part of that dissertation appeared in the published paper \cite{partone}.
%
%One of the main tools used herein --- and in many other %character
%degree problems --- is the following theorem, (12.3) from
%\cite{text}, stated here in a slightly different form:
%
%\begin{theorem} [``Two Case''].
%Let $G$ be solvable and nonabelian. Let $K \nor G$ be maximal %with
%$G/K$ nonabelian.  Then all nonlinear irreducible characters %of
%$G/K$ have equal degree $f$ and one of the following %situations
%occurs:
%\begin{enumerate}
%\item $G/K$ is a $p$-group
%\item $G/K$ is a Frobenius group with kernel $(G/K)'$, an %elementary
%  abelian $p$-group.  Also, $(G/K)/(G/K)'$ is cyclic of order %$f$.
%\end{enumerate}
%\end{theorem}

%This theorem breaks the problem into two separate halves.  %Using the
%Two Case Theorem, we prove the following two theorems %separately.
%
The main result of \cite{partone} was the following:

\begin{thmB}
Let $G$ be a solvable group satisfying the two-prime hypothesis. Assume $G$ has a nonabelian nilpotent factor group. Then $|\cd G| \leq 88$.
\end{thmB}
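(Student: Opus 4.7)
The plan is to let $N$ be the nilpotent residual of $G$ (the intersection of all normal subgroups $M$ with $G/M$ nilpotent), so $G/N$ is the largest nilpotent quotient of $G$. Since every quotient of an abelian group is abelian, the hypothesis that $G$ has a nonabelian nilpotent factor group forces $G/N$ itself to be nonabelian. Because $\cd{G/N} \subseteq \cd{G}$, the quotient $G/N$ inherits the two-prime hypothesis, and it will suffice to bound $|\cd{G/N}|$ together with $|\cd{G} \setminus \cd{G/N}|$.

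\textbf{Degrees of the nilpotent quotient.} For each nonabelian Sylow subgroup $P$ of $G/N$, every character degree of $P$ is a power of the corresponding prime $p$; the two-prime hypothesis then forces that whenever $p^a, p^b \in \cd{P}$ are distinct with $a < b$, one has $a \leq 2$. Hence $\cd{P} \subseteq \{1, p, p^2, p^k\}$ for at most one $k \geq 3$, so $|\cd{P}| \leq 4$. Writing $G/N = P_1 \times \cdots \times P_r$ as the direct product of its Sylow subgroups, degrees of $G/N$ factor as $d = \prod d_i$ with $d_i \in \cd{P_i}$, and for distinct product degrees $d, e$ one has $\omega(d, e) = \sum_i \min(v_{p_i}(d), v_{p_i}(e))$. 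Requiring this sum to be at most $2$ sharply restricts how many nonabelian Sylows can contribute simultaneously and which shapes of $\cd{P_i}$ are compatible. A finite case analysis then yields a small explicit bound on $|\cd{G/N}|$.

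\textbf{Degrees outside the nilpotent quotient.} For $\chi \in \irr{G}$ with $N \not\subseteq \ker \chi$, I would invoke Clifford theory relative to $N$, writing $\chi(1)$ in terms of an irreducible constituent $\theta$ of $\chi_N$, its inertia group in $G$, and the associated projective representations. The degrees already in $\cd{G/N}$ then impose stringent two-prime constraints on such $\chi(1)$: if $p^a \in \cd{G/N}$ is large, then $\omega(\chi(1), p^a) \leq 2$ unless $\chi(1) = p^a$, sharply bounding both the $p$-adic valuation and the prime support of $\chi(1)$. Applying these restrictions prime-by-prime across the primes appearing in the various possible shapes of $\cd{G/N}$, and using that $G/N$ nonabelian forces at least one such $p^a$ with $a \geq 1$ for each nonabelian Sylow, bounds $|\cd{G} \setminus \cd{G/N}|$. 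Multiplying the bounds from the two stages and optimizing over cases would yield the explicit constant $88$.

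\textbf{Main obstacle.} I expect the hardest step to be controlling $\cd{G} \setminus \cd{G/N}$: even with the strong restrictions imposed by $\cd{G/N}$, characters whose kernels miss $N$ can introduce many new degrees through Clifford correspondences, and ruling out long chains of such degrees requires a delicate interplay of two-prime constraints with the subgroup structure of $N$ in $G$ (inertia groups, stabilizers of linear characters of chief factors, and coprime action arguments). Matching these bounds against the several possible shapes for $\cd{G/N}$ identified in the second stage is where the precise constant $88$ would be pinned down.
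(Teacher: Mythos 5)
This statement (Theorem B) is not proved in the paper you were given: it is the main theorem of part I, quoted here from \cite{partone}, so there is no internal proof to match your argument against. What can be reconstructed from the machinery the present paper cites is the actual route: since $G$ has a nonabelian nilpotent quotient, it has a nonabelian $p$-group quotient for some prime $p$; choosing $K \nor G$ maximal with $G/K$ nonabelian and containing the relevant kernel, $G/K$ is a $p$-group, and Lemma (3.2) of \cite{partone} shows that the two-prime hypothesis on $G$ degrades to the $\hyp 1p$ on $K$. Theorem D with $|\pi| = 1$ then gives $|(\cd K)_{p'}| \le \frac32 + \frac{19}2 + 18 = 29$; since $|G:K|$ is a power of $p$, $(\cd G)_{p'} = (\cd K)_{p'}$, and the counting Lemma \ref{2.8} with $\pi = \{p\}$ yields $|\cd G| \le |\pi^3| + 29\,|\pi^{\le 2}| = 1 + 29 \cdot 3 = 88$. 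The essential idea is thus a \emph{relative} one-prime hypothesis on a normal subgroup below the $p$-group quotient, not an analysis of the nilpotent quotient itself.

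Measured against this, your proposal has a genuine gap, and it sits exactly where you flag it. Your first stage is fine but easy ($|\cd P| \le 4$ for each nonabelian Sylow $P$ of $G/N$, and some case analysis for the products). The second stage is the whole theorem, and the mechanism you propose cannot work in general: if $\cd{G/N}$ consists only of small prime powers, say $\cd{G/N} = \{1, p\}$ or $\{1, p, p^2\}$, then for any other degree $d$ the pair $(d, p^a)$ automatically satisfies $\omega(d, p^a) \le 2$, so comparing new degrees against the degrees of $G/N$ under the two-prime hypothesis imposes no restriction whatsoever, and nothing else in your outline bounds $|\cd G \setminus \cd{G/N}|$ --- Clifford theory alone produces no finite bound without an inductive hypothesis on a subgroup. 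The working proof supplies exactly this missing tool (the $\hyp 1p$ on $K$ plus Theorem D), which is absent from your sketch. Two smaller points: the two contributions to $|\cd G|$ should be added, not multiplied; and phrases like ``a finite case analysis'' and ``optimizing over cases would yield the explicit constant $88$'' are placeholders --- the constant $88$ arises as $29 \cdot 3 + 1$ from Theorem D and Lemma \ref{2.8}, not from the decomposition you describe.
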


Combining Theorem A with Theorem B, we obtain the main theorem of the first author's dissertation.

%This does not agree with Theorem A.  This needs to be checked.
\begin{thmC}
Let $G$ be a solvable group satisfying the two-prime hypothesis.  Then $|\cd G| \leq 462,\!515$.
\end{thmC}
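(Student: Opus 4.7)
The plan is to observe that Theorem C follows almost immediately from Theorem A and Theorem B by a dichotomy on whether $G$ admits a nonabelian nilpotent factor group. Given a solvable group $G$ satisfying the two-prime hypothesis, exactly one of the following holds: either $G$ has a nonabelian nilpotent factor group, or it does not.

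In the first case, I would directly invoke Theorem B, which yields $|\cd G| \leq 88$. In the second case, I would directly invoke Theorem A, which yields $|\cd G| \leq 462,\!515$. Since $88 \leq 462,\!515$, both cases yield the desired conclusion $|\cd G| \leq 462,\!515$, and the two cases exhaust all possibilities. No further combinatorial, representation-theoretic, or structural argument is needed at this stage, because all the substantive work has been absorbed into the statements of Theorems A and B.

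Consequently, there is essentially no obstacle in the proof of Theorem C itself; the genuine difficulties lie in the proofs of the two component theorems. Theorem B has already been established in \cite{partone}, and Theorem A is the object of the present paper. The only minor point worth making explicit in the write-up is that the hypothesis ``two-prime hypothesis'' is inherited by $G$ itself in both cases (no passage to a quotient is required in the statement), so the dichotomy on the existence of a nonabelian nilpotent factor group applies cleanly to $G$ without any additional verification. The proof therefore reduces to a one-line case split followed by quotation of the two preceding results.
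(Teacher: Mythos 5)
Your proposal matches the paper exactly: the paper states that Theorem~C is obtained by ``combining Theorem~A with Theorem~B,'' which is precisely the dichotomy you spell out. The argument is correct and complete.
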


We note that $27$ is the largest known value for $|\cd G|$ where $G$ is a solvable group satisfying the two-prime hypothesis.  Obviously, there is a wide gap between the upper bound we have found and the known groups.  The goal in this paper is to prove that a bound exists, and we have made no attempt to find an optimal bound.  However, it seems likely that even if one were to optimize the bound using the arguments in this paper that there would still be a large gap between the bound found here and the known groups, so there is still much room for the bound to be improved.

%The other result that is of fundamental importance is the %following
%theorem, (12.4) from \cite{text}, which helps us build new 5character
%degrees in the Frobenius group case.  Note that if $\psi \in %\irr G$, the {\bf vanishing-off} subgroup is defined by $\V %\psi = \langle g \in G \mid \psi(g) \not= 0 \rangle$.
%
%\begin{theorem}[``Degree Builder'']
%Let $K\nor G$ such that $G/K$ is a Frobenius group with %kernel
%$N/K$, an elementary $p$-group. Let $\psi \in \irr N$.  Then %one of
%the following occurs:
%\begin{enumerate}
%\item $|G:N|\psi(1) \in \cd G$
%\item $\V \psi \sbs K$.
%\end{enumerate}
%\end{theorem}

\section{A Few Technical Results} \label{technical}

We begin with a few technical lemmas which will be useful in the cases remaining to be considered.  The following fact is quite useful and will be used several times.  It is an easy application of the Glauberman correspondence (see Theorem (13.24) of \cite{text}).

%%
%% (7.1) Lemma
%%

\begin{lemma} \label{7.1}
Let $G$ act on $A$ by automorphisms with $G$ solvable, $A$ abelian, and $(|G|,|A|) = 1$.  Then the action of $G$ on $A$ and the induced action of $G$ on $\irr A$ are permutation isomorphic.
\end{lemma}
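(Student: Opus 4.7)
The plan is to use the standard criterion that two finite $G$-sets $X$ and $Y$ are permutation isomorphic if and only if $|X^H| = |Y^H|$ for every subgroup $H \leq G$. This follows from M\"obius inversion on the subgroup lattice (equivalently, Burnside's table of marks), since each $G$-set decomposes into transitive pieces $G/K$ and the multiplicity of each isomorphism type can be recovered from the fixed-point counts $|X^H|$ taken over a set of representatives of conjugacy classes of subgroups of $G$.

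With this criterion in hand, I would fix an arbitrary subgroup $H \leq G$ and compare the fixed-point counts on $A$ and $\irr A$. The left-hand count is immediate: $|A^H| = |\cent{A}{H}|$. For the right-hand count I would invoke the Glauberman correspondence. Because $G$ is solvable and $(|G|,|A|) = 1$, the subgroup $H$ is also solvable and satisfies $(|H|,|A|) = 1$, so Glauberman applies to the action of $H$ on $A$ and produces a bijection between the $H$-invariant characters in $\irr A$ and $\irr(\cent{A}{H})$. Since $\cent{A}{H}$ is abelian, $|\irr(\cent{A}{H})| = |\cent{A}{H}|$. Assembling these identities yields $|A^H| = |(\irr A)^H|$ for every $H \leq G$, and the criterion then delivers the required permutation isomorphism.

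The main obstacle, such as it is, lies in choosing the correct characterization of permutation isomorphism. Matching only element-wise fixed-point counts (as Brauer's permutation lemma would provide) yields equality of permutation characters but not of $G$-sets, so it would not suffice. The virtue of the coprime solvable hypothesis is that the Glauberman correspondence applies to every subgroup of $G$, not just to $G$ itself, and this is precisely what upgrades a Brauer-type character-level coincidence into a genuine permutation isomorphism.
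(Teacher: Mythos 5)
Your proof is correct and is essentially the standard argument the paper intends by ``an easy application of the Glauberman correspondence.'' You correctly identify that the table-of-marks criterion (equality of $|X^H|$ and $|Y^H|$ for all $H\le G$) is what upgrades a character-theoretic coincidence to an actual $G$-set isomorphism, and you correctly observe that the coprime solvable hypothesis allows Glauberman to be applied subgroup-by-subgroup, giving $|(\irr A)^H| = |\irr(\cent A H)| = |\cent A H| = |A^H|$. The paper merely cites Theorem (13.24) of Isaacs rather than spelling this out, but your argument is the substance behind that citation.
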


We write $G^\infty$ for the intersection of all $N \nor G$ with $G/N$ nilpotent.  Note that $G^\infty$ is a characteristic subgroup of $G$ and is the smallest normal subgroup of $G$ with a nilpotent factor group.

If $n$ is an integer, we use $\pi (n)$ to denote the set of primes that divide $n$.  If $H$ is a subgroup of $G$, we use $\pi (G:H) = \pi (|G:H|)$.

%%
%% (7.2) Lemma
%%

\begin{lemma} \label{7.2}
Let $G$ be a solvable but not nilpotent group, and assume $G^\infty$ is the unique minimal characteristic subgroup of $G$.  Write $N = G^\infty$.
\begin{enumerate}
\item Then $N$ is an elementary abelian Sylow $q$-subgroup of $G$ for some prime $q$
\item If $G/N$ is abelian, then $G/N$ acts on $N$ with a regular orbit and $|G:N| \in \cd G$
\item Let $n > 1$ be an orbit-size of the action of $G/N$ on $N$.  Then $\pi(n) = \pi(G:N)$.
\end{enumerate}
\end{lemma}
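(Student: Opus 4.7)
The plan is to exploit the minimality hypothesis: every nontrivial characteristic subgroup of $G$ contains $N$.

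For part (1), I would first note that the characteristic subgroups $N'$, $\Phi(N)$, and each Sylow $p$-subgroup of the abelian $N$ are characteristic in $G$ and properly contained in $N$, so by minimality each is trivial; thus $N$ is an elementary abelian $q$-group for a single prime $q$. To upgrade $N$ to a full Sylow $q$-subgroup, I would take $Q/N$ to be the (unique) Sylow $q$-subgroup of the nilpotent group $G/N$ and $K \geq N$ the preimage of its Hall $q'$-complement; both $Q$ and $K$ are characteristic in $G$, with $G = QK$ and $Q \cap K = N$. The standard fact $G^\infty = [G, G^\infty]$ gives $N = [G, N]$. Since the $q$-group $Q$ acts on the normal $q$-subgroup $N$, $N \cap \zent{Q}$ is nontrivial and characteristic in $G$, so $N \leq \zent{Q}$; hence $[Q, N] = 1$ and $N = [K, N]$. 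Fitting's lemma applied to the coprime action of $K/N$ on $N$ then yields $\cent{N}{K} = 1$. The analogue of Fitting's lemma for the coprime action of $K$ on $Q$ gives $Q = [Q, K] \cdot \cent{Q}{K} = N \cdot \cent{Q}{K}$ (using $[Q, K] \leq Q \cap K = N$); since $\cent{Q}{K}$ is characteristic in $G$ with $\cent{Q}{K} \cap N \leq \cent{N}{K} = 1$, minimality forces $\cent{Q}{K} = 1$, whence $Q = N$.

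Before turning to (2) and (3), I would record that $\cent{G}{N} = N$, i.e.\ $G/N$ acts faithfully on $N$. Indeed $\cent{G}{N}$ is characteristic in $G$, so Schur--Zassenhaus gives $\cent{G}{N} = N \times \cent{K}{N}$, making $\cent{K}{N}$ the unique Hall $q'$-subgroup of $\cent{G}{N}$ and hence characteristic in $G$; the same minimality trick (combined with $\cent{K}{N} \cap N = 1$) gives $\cent{K}{N} = 1$. For part (2), $G/N$ is then an abelian group acting faithfully and coprimely on the elementary abelian $q$-group $N$, and a classical result guarantees a regular orbit. Lemma~\ref{7.1} carries this across to a regular orbit on $\irr{N}$, so there is $\lambda \in \irr{N}$ with $I_G(\lambda) = N$; Clifford's theorem then gives $\lambda^G \in \irr{G}$ of degree $|G:N|$, so $|G:N| \in \cd{G}$.

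For part (3), suppose $v \in N$ has orbit of size $n > 1$ under $G/N$ but some prime $p$ divides $|G:N|$ with $p \nmid n$. Then the Sylow $p$-subgroup of the stabilizer of $v$ in $G/N$ is a full Sylow $p$-subgroup of $G/N$, which is unique since $G/N$ is nilpotent; call it $R/N$, with $R$ characteristic in $G$. Then $R/N$ fixes $v$, so $\cent{N}{R}$ is a nontrivial characteristic subgroup of $G$ contained in $N$, and minimality forces $\cent{N}{R} = N$. But then $R \leq \cent{G}{N} = N$, contradicting $p \in \pi(G:N)$. The main obstacle is the Sylow upgrade in (1): one must track coprime action on both $N$ and the potentially nonabelian $Q$ simultaneously and use characteristic-subgroup minimality to rule out a nontrivial fixed-point set of $K$ inside $Q$.
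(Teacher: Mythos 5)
Your proof is correct and follows the same overall decomposition as the paper (split $G/N$ into Sylow $q$ times Hall $q'$, exploit coprime actions, and use characteristic-subgroup minimality), but your part (1) takes a genuinely different route at the key step. The paper establishes $\oh{q'}{G} = 1$ and then invokes the Hall--Higman lemma to get $\cent{G}{Q} \sbs Q$, from which $\cent{Q}{H} > 1$ (the paper's $H$ is your $K$) leads directly to the contradiction $H \sbs \cent{G}{Q} \sbs Q$. You instead avoid Hall--Higman entirely: you use the standard identity $G^\infty = [G, G^\infty]$, and then the observation that $Q$ being a $q$-group acting on the normal $q$-subgroup $N$ forces the characteristic subgroup $N \cap \zent{Q}$ to be nontrivial, hence $N \le \zent{Q}$, hence $[Q,N]=1$ and $N = [K,N]$; Fitting's lemma then yields $\cent{N}{K} = 1$, which in turn forces $\cent{Q}{K} \cap N = 1$ and so $\cent{Q}{K} = 1$ by minimality. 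This is more elementary (no Hall--Higman) at the cost of one extra coprime-action decomposition, and it has the side benefit of giving $\cent{N}{K}=1$ as a free intermediate fact. Your treatment of parts (2) and (3) is essentially the paper's, except that you supply the short proof that $\cent{G}{N} = N$, which the paper merely asserts; in part (3) you use $\cent{N}{R}$ where the paper uses $\zent{P}$, but these play the identical role.

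One small notational slip worth fixing: in your justification that $\cent{G}{N} = N$, you write $\cent{G}{N} = N \times \cent{K}{N}$. But since $Q = N$ forces $K = G$, the symbol $\cent{K}{N}$ is just $\cent{G}{N}$ itself, which contains $N$, so the displayed direct-product decomposition does not parse as written. What you want is the Hall $q'$-subgroup $J$ of $\cent{G}{N}$: since $N$ is a central normal Sylow $q$-subgroup of $\cent{G}{N}$, Schur--Zassenhaus gives a complement $J$, and centrality of $N$ makes $J$ unique (hence characteristic in $G$) with $J \cap N = 1$; minimality then kills $J$. The argument is sound once the object is named correctly.
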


\begin{proof}
First note that since $G$ is solvable, $N$ must be an elementary abelian $q$-group for some prime $q$.  Let $Q \in \syl qG$ and observe that $N \sbs Q$.  We want to prove that $N = Q$.  By the uniqueness of $N$, we have $\oh {q'}G = 1$.  Hence, by the Hall-Higman theorem (``Lemma 1.2.3'') it follows that $\cent GQ \sbs Q$.

Since $G$ is not nilpotent, $Q < G$.  Let $H/N$ be a complement for $Q/N$ in $G/N$.  Because $G/N$ is nilpotent and $N$ is characteristic, it follows that $H$ is a characteristic subgroup of $G$.  Consider the action of $H$ on $Q$.  As $N$ is abelian, we have that $H/N$ acts on $Q$, and this action is coprime.  Hence, we may write $Q = \cent Q H \cdot [Q,H]$.

Because $G/N = H/N \times Q/N$, we see that $H/N$ centralizes $Q/N$.  In particular, this implies that $[Q,H] \sbs N$.  Now since $H$ and $Q$ are characteristic subgroups of $G$, it follows that $\cent QH$ is also a characteristic subgroup of $G$.

Assume $\cent Q H > 1$.  Then by the uniqueness of $N$, we must have $N \sbs \cent Q H$.  Now $[Q,H] \sbs N \sbs \cent Q H$ and therefore $Q = \cent Q H \cdot [Q,H] = \cent Q H$.  However, we now have $H \sbs \cent G Q \sbs Q$, which is a contradiction.

Thus we may assume $\cent Q H = 1$.  Now $Q = \cent Q H \cdot [Q,H] = [Q,H] \sbs N$.  Therefore $Q = N$, as desired.

For (b), observe that $\cent G N \sbs N$, and so $G/N$ acts
faithfully on $N$.  Now, Lemma (2.3) of \cite{partone} implies that $G/N$ has a regular orbit on $N$, as desired.  By Lemma
\ref{7.1}, there must also be a regular orbit of $G/N$ on $\irr N$.  Let $\lambda \in \irr N$ be an element of this orbit.  Then $I_G(\lambda) = N$, and so by the Clifford correspondence (Theorem 6.11 of \cite{text}), $\lambda^G \in \irr G$.  We have $\lambda^G (1) = |G:N| \in \cd G$, which gives (b).

To prove (c), let $1 \not= x \in N$.  Write $C = \cent Gx$ and $n = |G:C|$.  Since $n$ divides $|G:N|$, clearly $\pi (n) \sbs \pi (G:N)$.  For the reverse containment, assume $p$ is a prime dividing $|G:N|$ but that $p$ does not divide $n$.  In particular, this implies that $C/N$ contains a full Sylow $p$-subgroup of $G/N$.

Let $P/N \in \syl p{G/N}$.  Note that since $G/N$ is nilpotent, it follows that $P$ is a characteristic subgroup of $G$.  Now since $P$ centralizes $x$, we have $x \in \zent P$.  In particular, $\zent P$ is a nontrivial characteristic subgroup of $G$, and so $N \sbs \zent P$ by the uniqueness of $N$.  However, we now have $P \sbs \cent GN \sbs N$.  Since $p$ divides $|G:N|$, it must be that $P/N > 1$, which is a contradiction.
\end{proof}

%%
%% (7.3) Lemma
%%
We now consider $G^\infty$ when $G$ is a normal subgroup of some overgroup $\Gamma$.  This lemma and its sequels should be compared with Lemmas 2.2 and 2.3 of \cite{IsLe}.

\begin{lemma}\label{7.3}
Assume $\Gamma$ is solvable and $G \nor \Gamma$ is nonabelian with $G' \sbs \Oh pG$ for all primes $p$. Let $K \nor \Gamma$ be maximal with $K \sbs G$ and $G/K$ nonabelian.  Write $N/K = (G/K)'$. Then $N/K = (G/K)^\infty$ and $N/K$ is the unique minimal characteristic subgroup of $G/K$.
\end{lemma}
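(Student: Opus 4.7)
The essential observation is that the hypothesis ``$G' \sbs \Oh pG$ for every prime $p$'' says exactly that the largest $p$-group quotient $G/\Oh pG$ of $G$ is abelian for every prime $p$. My plan is to first push this property down to $G/K$, then use it to identify $N/K$ with $(G/K)^\infty$, and finally use the maximality of $K$ in $\Gamma$ to obtain the uniqueness of $N/K$ as a minimal characteristic subgroup.

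For the first step, note that if $M \nor G$ with $K \sbs M$ and $G/M$ a $p$-group, then $M \sps \Oh pG$, so $G/M$ is a quotient of the abelian group $G/\Oh pG$ and hence abelian. Thus every $p$-group quotient of $G/K$ is abelian. Since a (finite) nilpotent group is the direct product of its Sylow subgroups, it follows that the largest nilpotent quotient $(G/K)/(G/K)^\infty$ is itself abelian. This gives $(G/K)' \sbs (G/K)^\infty$, i.e.\ $N/K \sbs (G/K)^\infty$. The reverse inclusion is immediate, since $(G/K)/(N/K)$ is abelian, hence nilpotent. Therefore $N/K = (G/K)^\infty$.

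For uniqueness, first $N/K = (G/K)'$ is a characteristic subgroup of $G/K$, and it is nontrivial because $G/K$ is nonabelian. Suppose $M/K$ is any minimal nontrivial characteristic subgroup of $G/K$. Since $G \nor \Gamma$ and $K \nor \Gamma$, the quotient $G/K$ is normal in $\Gamma/K$, so any characteristic subgroup of $G/K$ is normal in $\Gamma/K$; hence $M \nor \Gamma$ with $K \lneq M \sbs G$. By the maximality of $K$, the quotient $G/M$ is abelian, so $N/K = (G/K)' \sbs M/K$. Since $N/K$ is itself a nontrivial characteristic subgroup sitting inside $M/K$, minimality of $M/K$ forces $M/K = N/K$. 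I do not foresee any substantial obstacle; the argument is mostly an unpacking of definitions, with the one small trick being the translation of $G' \sbs \Oh pG$ into the statement that every $p$-group quotient of $G$ (and then of $G/K$) is abelian.
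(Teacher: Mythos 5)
Your proof is correct, and the key ideas are the same as the paper's. The uniqueness half is essentially identical: characteristic subgroups of $G/K$ are normal in $\Gamma/K$, so the maximality of $K$ forces $G/M$ to be abelian for any such $M > K$, and hence $M/K \sps (G/K)' = N/K$. Where you diverge is in the inclusion $N/K \sbs (G/K)^\infty$: you obtain it directly by writing the nilpotent quotient $(G/K)/(G/K)^\infty$ as a product of its Sylow subgroups, each a $p$-group quotient of $G$ and therefore abelian since $G' \sbs \Oh pG$. The paper uses the same Sylow observation only to conclude that $G/K$ is not nilpotent, so that $(G/K)^\infty$ is a nontrivial characteristic subgroup, and then invokes the already-proved uniqueness statement to get $N/K \sbs (G/K)^\infty$. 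Your version makes the two halves of the conclusion logically independent, a small structural gain; the paper's is a shade shorter once uniqueness is in hand. The two arguments are essentially equivalent in content.
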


\begin{proof}
We first prove that $N/K$ is the unique minimal characteristic
subgroup of $G/K$.  Let $U/K$ be a nontrivial characteristic
subgroup of $G/K$.  Now $U \nor \Gamma$ and $U > K$, so by the way $K$ was chosen we must have $G/U$ abelian.  Then $U/K \sps (G/K)' = N/K$, as desired.

Since $G/K$ is nonabelian and $G' \sbs \Oh p G$ for all primes $p$, it follows that $G/K$ is not nilpotent.  In particular,
$(G/K)^\infty > 1$.  By the uniqueness of $N/K$, we have $N/K \sbs (G/K)^\infty$.  Also, $G/N$ is abelian and in particular $G/N$ is nilpotent.  Thus $(G/K)^\infty \sbs N/K$.  Therefore $N/K = (G/K)^\infty$, as desired.
\end{proof}

%%
%% (7.4) Lemma
%%
We now apply Lemmas \ref{7.2} and \ref{7.3}.

\begin{lemma}\label{7.4}
Assume $\Gamma$ is solvable and $G \nor \Gamma$ is nonabelian
with $G' \sbs \Oh p G$ for all primes $p$.  Let $K \nor \Gamma$ be maximal with $K \sbs G$ and $G/K$ nonabelian.  Write $N/K = (G/K)'$ and $f = |G:N|$. Assume further that $K$ is chosen so that $f$ is minimized.
\begin{enumerate}
\item $N/K$ is an elementary abelian Sylow $q$-subgroup of $G/K$ for some prime $q$
\item Assume $p \not= q$ and $G/\Oh p N$ is nonabelian.  Suppose further that $M \nor \Gamma$ with $\Oh p N \sbs M \sbs G$ and $M$ is maximal with $G/M$ nonabelian.  Then $(G/M)' = N/M$.
\item Assume $G/N$ is cyclic.  Suppose $S \nor \Gamma$ such that $S \sbs N$ and $N/S$ is an abelian $q$-group.  Suppose further that $\cent {N/S} {G/N} = 1$.  Then the action of $G/N$ on $N/S$ is Frobenius.
\end{enumerate}
\end{lemma}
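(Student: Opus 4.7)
\textbf{Part (a)} is immediate: Lemma \ref{7.3} gives $N/K$ as the unique minimal characteristic subgroup of $G/K$ equal to $(G/K)^\infty$, and Lemma \ref{7.2}(a) applied to the solvable, non\-nilpotent group $G/K$ identifies $N/K$ as its elementary abelian Sylow $q$-subgroup for some prime $q$. For \textbf{Part (b)}, I apply Lemma \ref{7.3} with $M$ playing the role of $K$ (valid since $M$ is $\Gamma$-maximal with $G/M$ nonabelian). Setting $N_1/M := (G/M)'$, part (a) applied to $G/M$ makes $N_1/M$ an elementary abelian Sylow $q_1$-subgroup. To conclude $N_1 = N$, I use two inclusions: $G' \sbs N$ (from $(G/K)' = N/K$) gives $N_1 = G'M \sbs NM$, and $\Oh p N \sbs M$ together with $N/\Oh p N$ being a $p$-group makes $NM/M \cong N/(N \cap M)$ a $p$-group. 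Then $N_1/M \sbs NM/M$ is a $p$-group, forcing $q_1 = p$; and as the normal Sylow $p$-subgroup of $G/M$, $N_1/M$ absorbs the $p$-subgroup $NM/M$, giving $N_1 = NM$. Since $M$ is a legal candidate in the minimization defining $f = |G:N|$, one has $|N_1| \leq |N|$, while $N \sbs NM = N_1$ gives the reverse; equality yields $N = N_1$, so $M \sbs N$ and $(G/M)' = N/M$.

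\textbf{Part (c)} is the most involved. Arguing by contradiction, suppose $G/N$ does not act Frobeniusly on $N/S$. Then some prime-order $H/N \leq G/N$ satisfies $T/S := \cent{N/S}{H/N} > 1$, and $T \nor \Gamma$ (since every subgroup of the cyclic $G/N$ is characteristic, $H$ is $\Gamma$-invariant, and the centralizer of a $\Gamma$-invariant subgroup in a $\Gamma$-invariant module is $\Gamma$-invariant). By Lemma \ref{7.3}, every $\Gamma$-invariant $U \sbs N$ satisfies $U \sbs K$ or $UK = N$, giving three cases for $(T, S)$: (i) $S \sbs K$ and $TK = N$, (ii) $S \sbs K$ and $T \sbs K$, (iii) $SK = N$ (forcing $TK = N$). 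A key preliminary is that \emph{$G/N$ already acts Frobeniusly on $N/K$}: by Lemma \ref{7.2}(b) the abelian $G/N$ acts faithfully on $N/K$ (a regular orbit exists), and by the minimality of $N/K$ as $\Gamma$-invariant subgroup, each $\cent{N/K}{\langle a \rangle}$ is forced to be $1$ or $N/K$, the latter contradicting faithfulness. Case (i) collapses immediately: $[T, H] \sbs S \sbs K$ shows $H/N$ centralizes $N/K = TK/K$, contradicting Frobenius on $N/K$.

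Cases (ii) and (iii) are the main obstacle. Since $N/K$ is the Sylow $q$-subgroup of $G/K$, $|G/N|$ is coprime to $q$, so the action of $G/N$ on the abelian $q$-group $N/S$ is coprime; the exact sequence $0 \to K/S \to N/S \to N/K \to 0$ (case (ii)) or its analogue $(K \cap S) \hookrightarrow S \cap \text{lifts} \to N/K$ in case (iii) splits as $G/N$-modules. Combined with Frobenius on $N/K$, this reduces the problem to showing $\cent{K/S}{H/N} = 1$ in case (ii), or $\cent{(T \cap K)/(S \cap K)}{H/N} = 1$ in case (iii). The crux is then to exploit the minimality of $f = |G:N|$: one builds from $T$ a $\Gamma$-invariant subgroup $K^* \sbs G$, maximal with $G/K^*$ nonabelian, whose associated $N^*/K^* = (G/K^*)'$ satisfies $|G:N^*| < f$, contradicting the choice of $K$. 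Producing $K^*$ from the $H/N$-fixed-point structure inside $K$---and verifying it lies in the correct position relative to $N$---is the technical heart of the proof that I expect to require the most care.
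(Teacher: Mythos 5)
\textbf{Parts (a) and (b)} are correct and follow the same route as the paper. For (b), your chain $N_1 = G'M \sbs NM$, then $N_1/M$ is a $p$-group, then $N_1/M$ is the normal Sylow $p$-subgroup so $N_1 = NM$, and finally the minimality of $f$ gives $N_1 = N$, matches the paper (where your $N_1$ is called $C$). No issues there.

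\textbf{Part (c)} has a genuine gap. Your reduction to prime-order $H/N$, the observation that $T \nor \Gamma$, the three-way case split by Lemma \ref{7.3}, and the preliminary that $G/N$ acts Frobeniusly on $N/K$ (via Lemma \ref{7.2}(b) and uniqueness of $N/K$) are all sound, and case (i) is handled correctly. But cases (ii)/(iii) have two problems. First, the claim that $0 \to K/S \to N/S \to N/K \to 0$ splits as $G/N$-modules is not justified and is in general false: $N/S$ is assumed only to be an \emph{abelian} $q$-group, not elementary abelian, so the extension need not split even as abelian groups (take $N/S \cong \mathbb{Z}/q^3$ cyclic with $K/S$ its index-$q$ subgroup and $G/N$ acting by a unit of order dividing $q-1$; then $\cent{N/S}{G/N}=1$ as required, but the sequence does not split). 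Coprimality gives the Fitting decomposition $N/S = \cent{N/S}{H/N} \times [N/S,H/N]$, which does not line up with the filtration by $K/S$. Second, and more decisively, you explicitly leave unfinished the construction of a $\Gamma$-normal $K^*$ contradicting the minimality of $f$; you describe it as the "technical heart" you "expect to require the most care," which is to say you have not proved it.

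For reference, the paper's argument for (c) takes a different tack than your $(T,S)$ case split. It fixes a nontrivial $x \in N/S$, sets $E/S = \cent{G/S}{x}$, and assuming $E > N$ applies Fitting's theorem to the coprime action of the cyclic group $E/N$ on $N/S$ to get $N/S = A/S \times B/S$ with $A/S = \cent{N/S}{E/N}$. Since $E/N$ acts trivially on $N/B$, the group $E/B$ is abelian, and the normal Hall complement $F/B$ to $N/B$ satisfies $F \nor \Gamma$ and $E/F \cong A/S$; since $1 \ne x \in A/S$ and $\cent{N/S}{G/N}=1$, the group $G$ acts nontrivially on $A/S$, so $G/F$ is nonabelian. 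Choosing $L \supseteq F$ normal in $\Gamma$ and maximal with $G/L$ nonabelian, applying Lemma \ref{7.2}(a) to $G/L$ and the minimality of $f$ forces $(G/L)' = N$, hence $F \sbs L \sbs N$; but then $E = AF \sbs N$, contradicting $E > N$. This construction of $F$ and $L$ from a single centralizer $E$, rather than from a splitting of $N/S$ along $K$, is the step your sketch is missing.
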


\begin{proof}
By Lemma \ref{7.3}, we have that $N/K = (G/K)^\infty$ is the unique minimal characteristic subgroup of $G/K$.  In particular, $G/K$ is not nilpotent and hence Lemma \ref{7.2} applies to the group $G/K$.  Statement (a) clearly follows from Lemma \ref{7.2}(a).

We now work to prove (b).  Let $M$ be as stated and write $C/M = (G/M)'$.  Now, Lemma \ref{7.3} implies that $C/M = (G/M)^\infty$ is the unique minimal characteristic subgroup of $G/M$.  Thus, $G/M$ is not nilpotent and we may apply Lemma \ref{7.2} to the group $G/M$.

Since $G/N$ is abelian, we have that $G/MN$ is abelian.  Hence $C \sbs MN$.  Now $|MN:M| = |N:N\cap M|$ divides $|N:\Oh p N|$ which is a power of $p$.  By Lemma \ref{7.2}(a) applied to $G/M$, we have that $C/M$ is a Sylow subgroup of $G/M$.  Thus it cannot be that $C < MN$. Hence $C = MN$.

Now, $C = MN \sps N$, so we have $f = |G:N| \geq |G:C| \geq f$, where the final inequality follows by the minimality of $f$.  Thus, $|G:N| = |G:C|$ and since $C \sps N$, we have $C = N$ as desired.

Finally, we prove (c).  Let $1 \not= x \in N/S$ and write $E/S = \cent {G/S} x$.  Note that since we have assumed $\cent {N/S} {G/N} = 1$, it follows that $N \sbs E < G$.  We want to show that $E = N$.  Assume otherwise.  Since $G/N$ is cyclic, note that $E \nor \Gamma$.

Consider the action of $E/N$ on $N/S$.  Now $N/S$ is an abelian $q$-group and $q$ does not divide $|G:N|$.  Thus this action is coprime and we may apply Fitting's Theorem.  Write $A/S = \cent {N/S} {E/N}$ and $B/S = [N/S, E/N]$ so that $N/S = A/S \times B/S$.  Note that since $A$ and $B$ are uniquely determined by subgroups normal in $\Gamma$ we have that $A,B \in \Gamma$.

Now $E/N$ acts trivially on $N/B$, so $N/B \sbs \zent {E/B}$. Since $E/N$ is cyclic it follows that $E/B$ is abelian.  Let $F/B \sbs E/B$ be a complement for $N/B$.  In particular, $F/B$ is a normal Hall subgroup of $E/B$, and since $E \nor \Gamma$ it follows that $F \nor \Gamma$.

Now $E/S = A/S \times F/S$, and so $E/F \cong A/S$ as $G$-modules.  We next want to show that $G$ acts nontrivially on $E/F$.  It suffices to show that $G$ acts nontrivially on $A/S$.  Assume otherwise. Then we have $\cent {G/S} {A/S} = G/S$ and in particular,
\[
1 \not= x \in A/S \sbs \cent {N/S} {G/N} = 1.
\]
This is a contradiction, and hence $G$ acts nontrivially on $E/F$, as desired.  Hence $G/F$ is nonabelian.

Let $L \nor \Gamma$ with $F \sbs L \sbs G$, where $L$ is chosen maximal with $G/L$ nonabelian.  Again by Lemma \ref{7.3}, we may apply Lemma \ref{7.2} to the group $G/L$.  Write $C/L = (G/L)'$.  Since $G/N$ is abelian, $G/NL$ is abelian, so $C \sbs NL$.  Now $S \sbs L \cap N$, and in particular, $|NL:L| = |N:L\cap N|$ divides $|N:S|$, which is a power of $q$.  From Lemma \ref{7.2} (a) applied to $G/L$ we conclude that
$(|NL:C|,|C:L|) = 1$, and thus $C = NL$.  In particular, $C \sps N$.  Now, $f = |G:N| \geq |G:C| \geq f$, where the last inequality follows by the minimality of $f$.  Hence $N = C \sps L \sps F$, which contradicts the choice of $F$.
\end{proof}

%%
%% (7.5) Lemma
%%
We obtain further information in this situation.

\begin{lemma}\label{7.5}
Assume $\Gamma$ is solvable and $G \nor \Gamma$ is not
nilpotent. Let $K \nor \Gamma$ be maximal with $K \sbs G$ and $G/K$ not nilpotent.  Write $N/K = (G/K)^\infty$. Then
\begin{enumerate}
\item $N/K$ is the unique subgroup of $G/K$ minimal with the property of being normal in $\Gamma$
\item $N/K$ is an elementary abelian Sylow $q$-subgroup of $G/K$ for some prime $q$
\item If $n$ is the size of a nontrivial orbit of the action of $G/N$   on $N/K$, then $\pi(n) = \pi(G:N)$.
\end{enumerate}
\end{lemma}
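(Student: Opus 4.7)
The plan is to apply Lemma \ref{7.2} to the quotient $G/K$, which has $(G/K)^\infty = N/K$ by definition. To do this I must verify that $N/K$ is the unique minimal characteristic subgroup of $G/K$; once that is in hand, parts (b) and (c) follow from Lemma \ref{7.2}(a) and (c) applied to $G/K$ (which is not nilpotent by the choice of $K$). So the substantive content is part (a), together with the bridge from ``normal in $\Gamma$'' to ``characteristic in $G/K$''.

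To prove (a), I would argue exactly as in Lemma \ref{7.3}. Let $U/K$ be a nontrivial subgroup of $G/K$ normal in $\Gamma$. Then $U \nor \Gamma$ and $K < U \sbs G$, so the maximality of $K$ forces $G/U$ to be nilpotent. Hence $U/K \sps (G/K)^\infty = N/K$. Conversely $N/K > 1$ since $G/K$ is not nilpotent, and $N/K$ is itself $\Gamma$-invariant because $(G/K)^\infty$ is characteristic in $G/K$ and $\Gamma$ acts on $G/K$ by automorphisms via conjugation (using $G, K \nor \Gamma$). Thus $N/K$ is the unique minimal $\Gamma$-normal subgroup of $G/K$, giving (a).

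The same observation that $\Gamma$ acts on $G/K$ by automorphisms shows that every characteristic subgroup of $G/K$ is $\Gamma$-invariant. Combined with (a), this means every nontrivial characteristic subgroup of $G/K$ contains $N/K$, while $N/K$ itself is characteristic. So $N/K$ is the unique minimal characteristic subgroup of $G/K$, which is exactly the hypothesis of Lemma \ref{7.2} for the solvable nonnilpotent group $G/K$. Lemma \ref{7.2}(a) applied to $G/K$ then yields (b), and Lemma \ref{7.2}(c) yields (c), noting that the action of $(G/K)/(N/K)$ on $N/K$ is the same as the action of $G/N$ on $N/K$ and that $|(G/K):(N/K)| = |G:N|$.

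I do not anticipate a serious obstacle here: the argument is structurally parallel to Lemmas \ref{7.3} and \ref{7.4}, with ``not nilpotent'' replacing ``nonabelian'' and $(G/K)^\infty$ replacing $(G/K)'$. The one point requiring care is keeping the distinction between $\Gamma$-normal and characteristic subgroups of $G/K$ clear, since Lemma \ref{7.2} is phrased in terms of characteristic subgroups whereas the maximality of $K$ is phrased in terms of normality in $\Gamma$; the bridge between the two notions is the elementary fact that $\Gamma$ acts on $G/K$ by automorphisms.
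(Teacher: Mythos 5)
Your proposal is correct and follows the paper's proof almost line for line: prove (a) directly from the maximality of $K$, then observe that characteristic subgroups of $G/K$ are $\Gamma$-invariant so $N/K$ is the unique minimal characteristic subgroup, and invoke Lemma \ref{7.2}(a),(c). The only difference is that you spell out the bridge from ``normal in $\Gamma$'' to ``characteristic in $G/K$,'' which the paper leaves as an unstated remark.
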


\begin{proof}
For (a), let $K < U \sbs G$ with $U \nor \Gamma$.  Then by choice of $K$, it must be that $G/U$ is nilpotent, and so $U/K \sps (G/K)^\infty = N/K$.  Hence $U \sps N$, as desired.

It follows by (a) that $N/K$ is the unique minimal characteristic subgroup of $G/K$.  Hence (b) and (c) are immediate from Lemma \ref{7.2} (a) and (c), respectively.
\end{proof}

%%
%% (7.6) Lemma
%%
We now consider a cyclic group acting semi-regularly on the basis of a vector space.

\begin{lemma} \label{7.6}
Let $F$ act on $V$, where $F$ is a cyclic group and $V$ is a vector space.  Assume $V$ has a basis which is permuted semi-regularly by $F$.  Write $|F| = f$ and let $s$ be a divisor of $f$.  Then there exists an $F$-orbit in $V$ of size $s$.
\end{lemma}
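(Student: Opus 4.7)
The plan is to exhibit a vector whose $F$-stabilizer is exactly the unique subgroup $H \le F$ of index $s$, and conclude that its orbit has size $s$.

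First, since $F$ is cyclic and $s \mid f$, there is a unique subgroup $H \sbs F$ with $|F:H| = s$. Pick any basis vector $b$ in the given basis $B$. Because $F$ is cyclic, say generated by $\sigma$, and acts semi-regularly on $B$, the stabilizer of $b$ in $F$ is trivial, so the $F$-orbit of $b$ consists of $f$ distinct basis vectors $b, \sigma b, \sigma^2 b, \ldots, \sigma^{f-1} b$, which are in particular linearly independent.

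Now set
\[
v = \sum_{h \in H} h \cdot b.
\]
Clearly $H$ fixes $v$, since left-multiplication by any $h_0 \in H$ just permutes the summands. I claim the full $F$-stabilizer of $v$ is $H$. If $\tau \in F$ satisfies $\tau v = v$, then
\[
\sum_{h \in H} (\tau h) \cdot b \;=\; \sum_{h \in H} h \cdot b.
\]
Because the vectors $g \cdot b$ for $g \in F$ are all distinct (by semi-regularity) and linearly independent, comparing coefficients forces the equality of sets $\tau H = H$ inside $F$. Since $1 \in H$, this gives $\tau \in H$. Hence $\mathrm{Stab}_F(v) = H$, and the $F$-orbit of $v$ has size $|F:H| = s$.

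The argument is almost entirely formal; there is no real obstacle beyond making sure the linear-independence step is applied correctly, which uses only that semi-regularity makes the $F$-orbit of $b$ a set of $f$ distinct (and hence linearly independent) basis elements.
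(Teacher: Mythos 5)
Your proof is correct and is essentially identical to the paper's: the paper picks a generator $x$ of $F$, sets $y = x^s$, and uses the vector $w = v_1 + v_1^y + \cdots + v_1^{y^{f/s-1}}$, which is exactly your $\sum_{h\in H} h\cdot b$ with $H = \langle y\rangle$ the unique subgroup of index $s$; both arguments then determine the stabilizer by comparing coefficients against the linearly independent orbit of the basis vector.
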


\begin{proof}
Since $V$ has a basis which is permuted semi-regularly by $F$, we can find a linearly independent set $v_1, v_2, \ldots, v_f \in V$ which is a complete $F$-orbit.  Write $F = \langle x
\rangle$.  Choose notation so that for $1 \leq i < f$ we have $v_i^x = v_{i+1}$ and $v_f^x = v_1$.

Write $y = x^{s}$.  Since $|F:\langle y \rangle| = s$, we will be done if we can construct a vector $w \in V$ with $F_w = \langle y \rangle$. Define
\[
w = v_1 + v_1^y + v_1^{y^2} + \cdots + v_1^{y^{f/s-1}} = v_1 +
v_{1+s} + v_{1+2s} + \cdots + v_{1+f-s}.
\]
Note that since $y^{f/s} = 1$ we have $y \in F_w$.  Conversely, assume $z \in F_w$.  Then since the $v_i$ are linearly independent, $v_1^z$ must be of the form $v_{1+is}$ for some $i$ with $0 \leq i \leq f/s-1$.  In particular, we see that $z = x^{is}$ for some $i$, and hence, $z \in \langle y \rangle$, as desired.
\end{proof}

\section {A unifying theorem}

We now consider a useful generalization of the $n$-prime hypothesis, which we define here and will use several times.

First, we must define some notation.  If $n$ is a positive integer and $\pi$ is a set of primes, write $n_\pi$ for the $\pi$ part of $n$. Also, write $\pi'$ for the complement of $\pi$ in the set of all primes. If $X$ is a set of positive integers, write $X_\pi = \{ x_\pi \mid x\in X \}$.

\medskip
{\bf Definition:}  Let $X$ be a set of positive integers, and let $\pi$ be a set of primes.  Then $X$ satisfies the ``$\hyp n\pi$'' if whenever $a,b \in X$ with $\omega(a,b) > n$, then $\subdash a\pi = \subdash b\pi$. If $\cd G$ satisfies this hypothesis, we will abbreviate by saying $G$ does.  We will refer to the ``$\hyp n{ \{ p \}}$'' as the ``$\hyp np$''.
\medskip

Note that the $\hyp n\emptyset$ is equivalent to the $n$-prime
hypothesis.  Also, if a group satisfies the $\hyp n\pi$ for some set $\pi$ of primes, then there is no information that can be obtained about the $\pi$-parts of elements of $\cd G$.  The question then becomes: if a group satisfies the $\hyp n\pi$, can we bound the number of $\pi'$-parts of character degrees in terms of $n$?  Here are two results of this type that were proved in \cite{partone}.

\begin{thmD}
Let $G$ be solvable and assume $G$ satisfies the $\hyp 1\pi$. Then
$|\subdash {\cd G}\pi| \leq \frac{3}{2}|\pi|^2 + \frac{19}{2}|\pi| + 18$.
\end{thmD}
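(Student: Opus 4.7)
The plan is to reduce the theorem to a combinatorial question about the set $Y := \subdash{\cd G}{\pi}$ of $\pi'$-parts of character degrees, and then to bound $|Y|$ by combining character-degree-graph structure theorems with a Sidon-style count. The first step is essentially formal: for $y_1 \neq y_2$ in $Y$ with lifts $a,b \in \cd G$ such that $\subdash a\pi = y_1$ and $\subdash b\pi = y_2$, the $\hyp 1\pi$ forces $\omega(a,b) \leq 1$; since $\gcd(y_1,y_2)$ divides $\gcd(a,b)$ and is already a $\pi'$-number, it follows that $\gcd(y_1,y_2)$ is $1$ or a single $\pi'$-prime to the first power. Hence $Y$ is a set of $\pi'$-numbers satisfying an internal ``one-prime'' condition: distinct elements share at most one prime factor, with multiplicity one.

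The bulk of the work is to bound $|\rho'|$, where $\rho'$ denotes the set of $\pi'$-primes dividing some element of $Y$, in terms of $|\pi|$. I would attack this via the character-degree graph $\Delta(G)$: for solvable $G$, Manz's theorem bounds the number of connected components of $\Delta(G)$ by $2$, and P\'alfy's theorem bounds each component's diameter by $3$. Restricting to the full subgraph induced on $\rho'$, the one-prime condition above implies that each unordered pair of primes from $\rho'$ lies jointly in at most one $y \in Y$, which in particular sharply limits how edges of $\Delta(G)[\rho']$ can accumulate. Combined with the observation that $\pi$-primes can bridge at most two distinct $\rho'$-components (since the full graph has at most two components), one should be able to force $|\rho'|$ to grow only linearly in $|\pi|$.

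Once $|\rho'|$ is controlled linearly in $|\pi|$, counting $|Y|$ reduces to a combinatorial bound on sets of integers with one-prime pairwise gcds. Partitioning $Y$ by the cardinality of the squarefree kernel of its prime support, and using that each pair of primes from $\rho'$ occurs in at most one element of $Y$, gives a bound of the form $|Y| \leq c_2|\rho'|^2 + c_1|\rho'| + c_0$ with explicit small constants. Substituting the linear bound on $|\rho'|$ then delivers the stated quadratic estimate $\tfrac32|\pi|^2 + \tfrac{19}{2}|\pi| + 18$.

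The main obstacle is the middle step. The $\hyp 1\pi$ gives no direct control on how $\pi$-primes interact with $\pi'$-primes in character degrees, whereas P\'alfy's and Manz's theorems speak about the full graph. I anticipate a case analysis on the number of components of $\Delta(G)$ and on the distribution of $\pi$-primes among them, drawing on coprime-action tools like the Glauberman correspondence (Lemma~\ref{7.1}) and the structural description of $(G/K)^\infty$ from Lemma~\ref{7.5}, to manufacture enough forced edges inside $\Delta(G)[\rho']$ to pin the graph's shape. Optimizing the constants in the linear bound for $|\rho'|$ and the combinatorial count for $|Y|$ is then what yields precisely the bound $\tfrac32|\pi|^2+\tfrac{19}{2}|\pi|+18$.
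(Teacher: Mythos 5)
This statement (Theorem D) is not proved in the present paper at all: it is imported from \cite{partone}, so there is no in-paper argument to compare against. Evaluating the proposal on its own terms, your opening reduction is correct: if $a,b\in\cd G$ have distinct $\pi'$-parts $y_1\neq y_2$, the $\hyp 1\pi$ forces $\omega(a,b)\le 1$, and since $\gcd(y_1,y_2)$ is a $\pi'$-number dividing $\gcd(a,b)$ it has $\omega\le 1$; thus $Y=\subdash{\cd G}{\pi}$ satisfies the ordinary one-prime hypothesis as a set of integers. This is exactly the sort of reduction Lemma~\ref{2.8} is set up for, and it is a reasonable first move.

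The middle step, however, contains a genuine and fatal gap. You propose to bound $|\rho'|$ (the $\pi'$-primes occurring in $Y$) linearly in $|\pi|$ and then use a pair-counting bound $|Y|\lesssim|\rho'|^2$. But $|\rho'|$ is \emph{not} bounded in terms of $|\pi|$: already with $\pi=\emptyset$, one can take $G$ a Frobenius group with $\cd G=\{1,a\}$ where $a$ has arbitrarily many distinct prime divisors; the one-prime hypothesis is vacuous for a two-element $\cd G$, yet $|\rho'|=|\rho(G)|$ is unbounded. More fundamentally, the one-prime condition alone cannot bound $|Y|$ — any set of pairwise coprime integers satisfies it — so the content of Theorem D must come from the group structure bounding $|Y|$ directly, not from first bounding the prime set. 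The P\'alfy/Manz graph facts you invoke (independence number $\le 2$, at most two components, diameter $\le 3$) constrain the \emph{shape} of $\Delta(G)$ but do not cap the number of vertices, and the edge-disjoint-clique structure you extract from $Y$ is compatible with arbitrarily large $\rho'$. A correct proof has to replace the ``bound $|\rho'|$'' step with structural character-theoretic analysis of $G$ itself (normal series, Frobenius quotients, induction on suitable normal subgroups, etc.) that directly limits how many distinct $\pi'$-parts can arise, and the specific coefficients $\tfrac32$ and $\tfrac{19}{2}$ should emerge from the case breakdown of that analysis rather than from substituting a linear bound into a binomial coefficient.
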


\begin{thmE}
Let $G$ be solvable and nonabelian, and assume $G$ satisfies the $\hyp n\pi$.  Let $K \nor G$ be maximal with $G/K$ nonabelian. Write $N/K = (G/K)'$ and assume $N$ is nilpotent.  Then
\[
|\subdash {\cd G}\pi| \leq (1 + 2^{2n})^3.
\]
\end{thmE}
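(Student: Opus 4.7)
The plan is to combine the structural analysis of $G/K$ supplied by Lemma~\ref{7.2} with Clifford theory applied to the normal series $1 \nor K \nor N \nor G$, and then to squeeze the bound on $\pi'$-parts out of the $\hyp n\pi$.

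First, the choice of $K$ already determines the structure of $G/K$ without appealing to the extra hypothesis of Lemma~\ref{7.3}: if $K < U \sbs G$ with $U \nor G$, then by maximality of $K$ the quotient $G/U$ is abelian, so $U \sps N$.  Hence $N/K$ is the unique minimal characteristic subgroup of $G/K$.  Since $N$ is nilpotent, its Sylow subgroups are characteristic in $N$ and therefore in $G$, so the uniqueness of $N/K$ forces $N/K$ to be a $q$-group for some prime $q$; minimality (together with the characteristic nature of the Frattini subgroup) then forces $N/K$ to be elementary abelian.  Consequently the conclusions of Lemma~\ref{7.2} apply to $G/K$: in particular $|G:N| \in \cd G$ by (b), and every nontrivial orbit size of $G/N$ on $\irr{N/K}$ has prime support equal to $\pi(G:N)$ by (c).

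Next I would factor each $\chi(1) \in \cd G$ via Clifford theory into three ``meaningful'' coordinates $a \cdot b \cdot c$, where $a$ is a $G$-orbit index on $\irr N$ (so a Clifford orbit size), $b \in \cd N$ is the degree of an irreducible constituent of $\chi_N$, and $c$ is the degree of an irreducible character of the section $I_G(\psi)/N$ attached to that constituent $\psi$.  Because $N$ is nilpotent, ramification of characters over $K$ is tame, and the three coordinates behave independently with respect to $\pi'$-parts.  The map
\[
\chi(1) \;\longmapsto\; (a_{\pi'},\; b_{\pi'},\; c_{\pi'})
\]
then bounds $|\subdash{\cd G}\pi|$ by the product of the sizes of the $\pi'$-projections of the three coordinate sets.

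The main obstacle is the combinatorial bound of $1+2^{2n}$ on each coordinate set.  Each such set is built from degrees whose prime support is constrained to a fixed finite set (for the orbit indices this is $\pi(G:N)$ by Lemma~\ref{7.2}(c), applied inside $G/K$ and, after induction, inside sections arising from Lemma~\ref{7.5}(c)), so its elements are encoded by tuples of prime exponents over a common prime-support.  The $\hyp n\pi$ says that any two elements whose $\gcd$ has more than $n$ prime factors (with multiplicity) must share the same $\pi'$-part; equivalently, distinct $\pi'$-parts arise only from pairs whose $\omega$-overlap is at most $n$.  A pigeonhole argument on the exponent tuples, partitioning primes according to whether they contribute to the $\pi$- or $\pi'$-part, then produces the exponential bound $2^{2n}$ (the ``$+1$'' accounting for the trivial $\pi'$-part).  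Multiplying the three coordinate bounds gives $|\subdash{\cd G}\pi| \le (1 + 2^{2n})^3$.  The delicate points will be verifying that the Clifford coordinates can legitimately be analyzed independently (this is where the nilpotency of $N$ and the characteristic nature of its Sylow subgroups enter) and showing that each coordinate inherits a common prime-support strong enough to make the combinatorial lemma applicable.
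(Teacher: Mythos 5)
The paper does not prove Theorem~E: it is cited as a result from \cite{partone} (and later invoked there as ``Theorem (6.3)''). So there is no internal proof to compare against, and the proposal has to be judged on its own merits. As written it is a high-level sketch with several genuine gaps, the most serious of which concern exactly the parts you yourself flag as ``delicate.''

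First, the third coordinate in your factorization $\chi(1) = a\cdot b\cdot c$ cannot do what you want. Since $G/N = G/(G/K)'$ is abelian, every section $I_G(\psi)/N$ is abelian, so $\cd{I_G(\psi)/N} = \{1\}$ and your coordinate $c$ contributes nothing. What remains unaccounted for is the genuine ramification factor $\xi(1)/\psi(1)$ for $\xi \in \irr{I_G(\psi)\mid\psi}$, which need not equal $1$ when $\psi$ fails to extend, and the assertion that ``ramification over $K$ is tame'' because $N$ is nilpotent is not a substitute for controlling it. The three-fold structure behind the exponent $3$ in $(1+2^{2n})^3$ therefore has to come from a different decomposition than the one you describe, and as stated your decomposition does not support a product bound.

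Second, and most importantly, the bound $1+2^{2n}$ on each coordinate is asserted via ``a pigeonhole argument on the exponent tuples'' but never derived, and such an argument by itself cannot give a bound independent of the size of the prime support. Indeed, Lemma~\ref{2.8} (Lemma~2.8 of \cite{partone}), which is what a direct pigeonhole-style count gives, yields a bound that grows polynomially in $|\pi(X)|$. The absolute bound $1 + 2^{2n}$ comes from the extra structural fact that the relevant set is \emph{lcm-closed} (Lemma~2.9 of \cite{partone}, invoked in the present paper in the proof of Theorem~\ref{9.3}). Your sketch never identifies this lcm-closedness, never exhibits which sets are lcm-closed, and so never actually reaches the stated bound; this is the central technical content of the theorem.

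Third, your reduction to Lemma~\ref{7.2} requires $G/K$ to be non-nilpotent, since that lemma is stated for ``solvable but not nilpotent'' groups (its proof begins with a Hall--Higman argument that needs a nontrivial coprime part). You verify that $N/K$ is the unique minimal characteristic subgroup of $G/K$ and conclude that Lemma~\ref{7.2}(b),(c) apply, but you do not rule out $G/K$ being a nonabelian $p$-group, in which case the coprime-action arguments in Lemma~\ref{7.2} break down and a separate argument is needed. Taken together, these gaps mean the proposal identifies the right general shape (analyze $G/K$ via Lemma~\ref{7.2}, use Clifford theory along $K \nor N \nor G$, get a cubed bound) but does not constitute a proof.
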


The following theorem unifies many of the remaining cases, and
allows us to apply the results of Section 6 of \cite{partone}.

%%
%% (7.7) Theorem (``Unifying Theorem'')
%%

\begin{theorem}\label{7.7}
Let $\Gamma$ be a finite solvable group.  Assume:
\begin{enumerate}
\item $\Gamma$ satisfies the two-prime hypothesis
\item $K, N, G$ are normal subgroups of $\Gamma$, where $G/K$ is a Frobenius group with Frobenius kernel $N/K$
\item $G/N$ is cyclic of order $f$ and $N/K$ is an elementary abelian $q$-group with $|N:K| \leq q^2$
\item $|\pi(f)| \geq 8$
\item $G/\Oh p N$ is abelian for all primes $p \not= q$
\item For every $S \nor \Gamma$ with $\Oh q N \sbs S \sbs N$ and $\cent {N/S} {G/S} = N/S$, we have that the action of $G/N$ on $N/S$ is Frobenius
\item $q$ does not divide $|\Gamma: G|$.
\end{enumerate}
Then $N$ is nilpotent.
\end{theorem}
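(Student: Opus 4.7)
The plan is to combine a character-theoretic consequence of the two-prime hypothesis with structural deductions from (e) and (f), arguing by contradiction via Lemma~\ref{7.5}.

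First I would show $f \in \cd G$. Since $G/K$ is Frobenius with abelian kernel $N/K$ and $(f,q)=1$, the action of $G/N$ on $N/K$ is fixed-point-free, and by Lemma~\ref{7.1} so is the induced action on $\irr{N/K}$; any nontrivial $\lambda \in \irr{N/K}$ therefore has $I_G(\lambda)=N$ and yields $\lambda^G \in \irr G$ of degree $f$. By the two-prime hypothesis and $|\pi(f)| \geq 8$, every $\chi \in \irr G$ with $\chi(1) \neq f$ satisfies $\omega(\gcd(\chi(1),f)) \leq 2$. In particular, if some nonlinear $\theta \in \irr N$ had $I_G(\theta)=N$, then $\theta^G \in \irr G$ would have degree $f\theta(1) \neq f$ with $f \mid \gcd(f\theta(1),f)$, giving $\omega \geq 8$---a contradiction. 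Hence every nonlinear $\theta \in \irr N$ has $I_G(\theta) > N$.

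Next I extract the structural consequence of (e) and (f). By (e), $G' \leq \Oh{p}{N}$ for every $p \neq q$, so writing $R := \bigcap_{p \neq q} \Oh{p}{N}$ we have $[G,N] \leq R$ and $N/R$ abelian. Applying (f) to the $\Gamma$-invariant subgroup $S := \Oh{q}{N} \cdot [G,N]$ (which contains $\Oh{q}{N}$ and satisfies $\cent{N/S}{G/S} = N/S$ since $[G,N] \leq S$), hypothesis (f) requires the $G/N$-action on $N/S$ to be Frobenius; this forces $S = N$, since a trivial action of the nontrivial group $G/N$ is not Frobenius unless $N/S = 1$. Therefore $\Oh{q}{N} \cdot R = N$, and as $[\Oh{q}{N}, R] \leq \Oh{q}{N} \cap R$, the quotient $N / (\Oh{q}{N} \cap R)$ decomposes as the direct product of the $q$-group $N/\Oh{q}{N}$ and the $q'$-group $N/R$, and hence is nilpotent. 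Consequently $N^\infty \leq \Oh{q}{N} \cap R \leq \Oh{q}{N} \leq K$.

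Now suppose for contradiction that $N^\infty > 1$. Applying Lemma~\ref{7.5} to $N \nor \Gamma$ yields $K^* \nor \Gamma$ maximal with $K^* \leq N$ and $N/K^*$ non-nilpotent, together with $M^*/K^* = (N/K^*)^\infty$, which is the unique minimal subgroup of $N/K^*$ normal in $\Gamma$ and an elementary abelian Sylow $p$-subgroup of $N/K^*$ for some prime $p$; moreover, non-trivial $N/M^*$-orbits on $M^*/K^*$ have size $n$ with $\pi(n) = \pi(N:M^*)$. Pick $\lambda \in \irr{M^*/K^*}$ in a non-trivial $N/M^*$-orbit (guaranteed by Lemma~\ref{7.1} applied to this coprime action), and let $\theta$ be an irreducible constituent of $\lambda^N$; then $\theta(1) \geq n > 1$, so $\theta$ is nonlinear. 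To contradict the first step it suffices to show $I_G(\lambda) \leq N$, which forces $I_G(\theta) = N$.

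This final inertia calculation is the main obstacle. Condition (g), $q \nmid |\Gamma:G|$, implies the Sylow $q$-subgroup of $\Gamma$ lies in $N$, which together with the Frobenius structure of $G/K$ constrains how $G/N$ acts on the $p$-group $M^*/K^*$. Lemma~\ref{7.6}, applied to the cyclic $G/N$ acting on the $\mathbb{F}_p$-vector space $M^*/K^*$, produces $G/N$-orbits of every size dividing $f$, and the $\Gamma$-minimality of $M^*/K^*$ together with the bound $|N:K| \leq q^2$ further restricts possible $G/N$-stabilizers. Combining these ingredients, one argues that $\lambda$ may be chosen with trivial $G/N$-stabilizer, yielding the sought nonlinear $\theta \in \irr N$ with $I_G(\theta)=N$ and the desired contradiction. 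Separate analyses are required in the cases $p \neq q$ (in which $M^* \leq K$) and $p = q$.
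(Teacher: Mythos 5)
Your outline identifies the right endpoint to aim for (Lemma~\ref{7.5} applied to $N$, followed by an inertia computation), but there are three substantive problems, two of them fatal.

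\textbf{Misapplication of the two-prime hypothesis.} You conclude from ``the two-prime hypothesis'' that every nonlinear $\theta \in \irr N$ has $I_G(\theta) > N$. But hypothesis (1) says $\Gamma$ satisfies the two-prime hypothesis, and in the applications of Theorem~\ref{7.7} (e.g., Theorems~\ref{11.2} and~\ref{11.3}) we genuinely have $\Gamma > G$, so $\cd G$ itself need not satisfy the two-prime hypothesis. What one can extract, via Lemma~(3.6) of the reference \cite{partone}, is that $G$ satisfies the $(2,\pi(\Gamma:G))$-hypothesis. Applied to $f$ and $f\theta(1)$, this gives only that $\theta(1)$ is a $\pi(\Gamma:G)$-number, not that $\theta(1) = 1$. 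The correct conclusion (the paper's Step~3) is the weaker: if $a \in \cd G$ with $\omega(a,f)\ge 3$, then $q\nmid a$, using that $q\notin\pi(\Gamma:G)$ by hypothesis~(7). This is a much weaker tool, and the entire remaining argument must be rebuilt around forcing a $q$ into the relevant degrees.

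\textbf{Misreading of hypothesis (6).} You apply (6) to $S = \Oh q N\cdot[G,N]$ with $\cent{N/S}{G/S}=N/S$, i.e., with $G/N$ acting \emph{trivially} on $N/S$, and use it to force $S=N$. But a trivial action of a nontrivial group is never Frobenius, so read literally (6) would be vacuously contradictory for any proper $S$ of that form; the hypothesis as intended (compare Lemma~\ref{7.4}(c) and the way (6) is verified in Theorems~\ref{8.3}, \ref{11.2}, \ref{11.3}) is that the action of $G/N$ on $N/S$ has \emph{no} nontrivial fixed points, i.e., $\cent{N/S}{G/N}=1$, and \emph{then} that action is Frobenius. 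Your $S$ satisfies the exact opposite condition, so (6) simply does not apply. Your conclusion $N=\Oh q N\cdot R$ happens to be true for a trivial reason — $N/(\Oh q N \cdot R)$ is a common quotient of the $q$-group $N/\Oh q N$ and the $q'$-group $N/R$ — but hypothesis (6) is not the engine here; it is used much later (the paper's Step~15 and the final paragraph) to conclude certain quotients of $G$ are Frobenius groups, which is quite a different role.

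\textbf{The inertia step is missing.} You say explicitly that the final inertia calculation is ``the main obstacle'' and gesture at Lemma~\ref{7.6}, hypothesis~(7), and a case split on $p$ versus $q$, but you do not carry it out. This is in fact essentially the entire theorem: the paper's proof spends fifteen interlocking steps showing that every nonprincipal $\lambda\in\irr{L/M}$ has $I_Q(\lambda)$ strictly inside $K\cap Q$, that only two nontrivial $Q/L$-orbit sizes occur, that this forces $|N:K|=q$, $P=L$, $Q=N$, that $K/L$ is elementary abelian of rank two, that $N/L$ is extraspecial of order $q^3$, and finally (after a Maschke argument) that $N/M$ would be a Frobenius group with extraspecial Frobenius complement, which is impossible. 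None of these steps are visible in your sketch, and the step that you do attempt (forcing $I_G(\lambda)\le N$ outright) appears to be wrong in general: the paper's Step~7 shows one cannot have both $G/P$ Frobenius and a $\lambda$ with $I_Q(\lambda)=L$, suggesting the trivial-inertia situation you want to manufacture is exactly what the hypotheses prevent. So the plan as stated would not close.
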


\newcounter{stepcount} \setcounter{stepcount}{1}
\newcommand{\step}{\medskip
\noindent{\bf Step \arabic{stepcount}}\stepcounter{stepcount}. }

\begin{proof}
Assume $N$ is not nilpotent.  We will take a normal subgroup of $\Gamma$, contained in $N$, which is chosen maximal with a non-nilpotent factor.  Using results from Section \ref{technical}, we work towards a contradiction.

Let $M \sbs N$, where $M \nor \Gamma$ and $M$ is chosen maximal such that $N/M$ is not nilpotent.  Write $L/M = (N/M)^\infty$.  Now, Lemma \ref{7.5} applies to the group $N/M$.  From Lemma \ref{7.5} (a), we have that $L/M$ is the unique subgroup of $N/M$ which is minimal with the property of being normal in $\Gamma$.  Also, by Lemma \ref{7.5} (b), we conclude that $L/M$ is an elementary abelian Sylow $s$-subgroup of $N/M$ for some prime $s$.  Finally, by Lemma \ref{7.5} (c), we have that if $1 \not= x \in L/M$, then $\pi(N/M:\cent{N/M}x) = \pi(N:L)$. Since $N/L$ is nilpotent, we may write $N/L = P/L \times Q/L$, where $P/L = \oh {q'} {N/L}$ and $Q/L = \oh q {N/L}$.

% Step 1

\step $G/Q$ is abelian.

\begin{proof}
By assumption (5), it follows that $G' \sbs \Oh p N$ is abelian for all primes $p\not=q$.  Since $N/Q$ is nilpotent, we have
\[
\bigcap_{p\in \pi(N:Q)} \Oh p N \sbs Q.
\]
Now $q$ does not divide $|N:Q|$, and hence $G' \sbs Q$.
\end{proof}

% Step 2

\step Assume $L < X \sbs N$ with $X \nor \Gamma$.  Then $\cent {L/M}{X/L} = 1$.

\begin{proof}
Assume otherwise.  Write $Z = L/M \cap \zent{X/M}$. Then $Z > 1$.  Since $L,X \nor \Gamma$, it follows that $Z \nor \Gamma$.  By the uniqueness of $L$, we must have $Z \sps L$.

Now $L/M$ is a normal, central Sylow $s$-subgroup of $Z/M$. Hence, if we write $U/M=\oh {s'} {Z/M}$, it follows that $Z/M = L/M \times U/M$.  On the other hand, $U \nor \Gamma$, and since $X > L$, we must have $U > M$.  However, $U \not\sps L$, which contradicts the uniqueness of $L$.
\end{proof}

% Step 3

\step Assume $a \in \cd G$ with $\omega(a,f) \geq 3$. Then $q$ does not divide $a$.

\begin{proof}
Since $\Gamma$ satisfies the two-prime hypothesis, it follows by Lemma (3.6) of \cite{partone} that $G$ satisfies the $(2,\pi(\Gamma:G))$-hypothesis. Since $\omega(a,f) \geq 3$, the $(2,\pi(\Gamma:G))$-hypothesis implies that $\subdash a{\pi(\Gamma:G)} = \subdash f{\pi(\Gamma:G)}$. Now, since $q \notin \pi(\Gamma:G)$, this implies $a_q = f_q = 1$, as desired.
\end{proof}

% Step 4

\step Let $\lambda \in \irr {L/M}$.  Then $\lambda$ extends to
$I_G(\lambda)$.

\begin{proof}
Write $T = I_G(\lambda)$ and $S = I_N(\lambda)$.  Now, since $L/M$ is a Sylow $s$-subgroup of $N/M$, we have that $|S:L|$ is coprime to $|L:M|$.  In particular, if we write $o(\lambda)$ for the order of $\lambda$ in the group $\lin {L/M}$, it must be that $|S:L|$ is coprime to $o(\lambda)$.  Then it follows by Corollary (6.28) of \cite{text} that $\lambda$ has a unique extension $\hat\lambda \in \irr S$ with $o (\hat\lambda) = o (\lambda)$.

Since $\lambda$ uniquely determines $\hat\lambda$, and certainly $\hat\lambda$ determines $\lambda$, we have that $I_G(\hat\lambda) = T$.  Now $T/S = T/(T\cap N)$ is isomorphic to $TN/N$, which is a subgroup of $G/N$.  Hence $T/S$ is cyclic.  Thus by Corollary (11.22) of \cite{text}, we have that $\hat\lambda$ extends to $T$.  Since $\hat\lambda$ is an extension of $\lambda$, we are done.
\end{proof}

% Step 5

\step Let $\lambda \in \irr {L/M}$ be nonprincipal.  Then
$I_Q(\lambda) \sbs K$.

\begin{proof}
Assume otherwise.  Then in particular, $I_Q(\lambda) \cap K <
I_Q(\lambda)$.

We first work to show that $|I_G(\lambda):I_N(\lambda)|$ is an
irreducible character degree of $I_G(\lambda)/L$.  Note that this is trivial if $I_G(\lambda) = I_N(\lambda)$.  Hence, we may assume that $I_G(\lambda) > I_N(\lambda)$.

We have $I_Q(\lambda) \sbs I_N(\lambda)$, so
$I_N(\lambda) \not\sbs K$.  In particular, $I_N(\lambda) >
I_K(\lambda)$.  Since $G/K$ is a Frobenius group with kernel $N/K$, it follows that $I_G(\lambda)/I_K(\lambda)$ is a Frobenius group with kernel $I_N(\lambda)/I_K(\lambda)$.  Therefore, $|I_G(\lambda):I_N(\lambda)|$ is an irreducible character degree of $I_G(\lambda)/L$, as desired.  Write $g =
|I_G(\lambda):I_N(\lambda)|$.

Next, we prove that $|G:I_N(\lambda)| \in \cd G$. By Step 4, we have that $\lambda$ extends to $I_G(\lambda)$. Now, Gallagher's Theorem (Corollary 6.17 of \cite{text}) implies that since $g \in \cd {I_G(\lambda)/L}$ we may choose $\psi \in \irr
{I_G(\lambda) \mid \lambda}$ with $\psi(1) = g$. By the Clifford correspondence (Theorem 6.11 of \cite{text}), we have $\psi^G \in \irr G$. Thus
\[
\psi(1) = |G:I_G(\lambda)| g = |G:I_N(\lambda)| \in \cd G,
\]
as desired.

We now show that $q$ divides $|N:I_N(\lambda)|$. Since $N/L$ is nilpotent and $Q/L$ is a Sylow $q$-subgroup of $N/L$, it suffices to show that $Q > I_Q(\lambda)$.  Assume otherwise. Then in the action of $Q/L$ on $\irr {L/M}$ there is a nontrivial fixed point.  By Lemma \ref{7.1}, it follows that the action of $Q/L$ on $L/M$ has a nontrivial fixed point.  This contradicts Step 2 since $Q \nor \Gamma$.

Finally, we have $|G:N|\cdot |N:I_N(\lambda)| = f\cdot
|N:I_N(\lambda)| \in \cd G$.  This character degree is divisible by both $f$ and $q$, which contradicts Step 3.  The contradiction arose from our assumption that $I_Q(\lambda) \not\sbs K$, and this completes the proof of the step.
\end{proof}

% Step 6

\step
Assume $\sigma \sbs \pi(f)$ and let $F/L \nor G/L$ be a
Frobenius group.  Suppose $R/L \sbs Q/L$ is the kernel of $F/L$ and assume $R \nor \Gamma$.  Assume $|F:R| = f_\sigma$.  Then $|\sigma| \leq 3$.

\begin{proof} Assume $|\sigma| \geq 4$. Since $R \nor \Gamma$, Step 2 implies that $R/L$ acts nontrivially on $L/M$.  Hence, Theorem 15.16 of \cite{text} applies. We obtain a basis for $L/M$ which is permuted semi-regularly by a Frobenius complement $H/L$ of $F/L$.  Viewing $L/M$ as a vector space over ${\bf F}_s$, we may appeal to Lemma \ref{7.6}.

Let $p \in \sigma$, and write $a = f_\sigma/p$.  By Lemma \ref{7.6}, there exists an $H/L$-orbit in $L/M$ of size $a$.  In view of Lemma \ref{7.1}, we must also have $\alpha \in \irr {L/M}$ which lies in an $H/L$-orbit of size $a$.  Recall that $R/L \sbs Q/L$ is a $q$-group. Now, Lemma (2.5) of \cite{partone} implies that $aq^u = |F:I_F(\alpha)|$ for some $u \geq 0$.  By Step 2, we cannot have $R \sbs I_F(\lambda)$.  Thus, we must have $u > 0$ since $R \nor \Gamma$.

By Step 4, we have that $\alpha$ extends to $I_F(\alpha)$. Using the Clifford Correspondence (Theorem 6.11 of \cite{text}), we obtain $aq^u \in \cd F$. Now, consider $b \in \cdover G {aq^u} F$. Since $F \nor G$, we have $aq^u$ divides $b$. Now $\omega(b,f) \geq \omega(aq^u, f) \geq |\sigma| - 1 \geq 3$. But $q$ divides $b$, which contradicts Step 3.
\end{proof}

% Step 7

\step We cannot have both $G/P$ a Frobenius group (with kernel
$N/P$) and $1 \not= \lambda \in \irr {L/M}$ with $I_Q(\lambda) = L$.

\begin{proof}
First, we claim that $|\pi(P:L) \cap \pi(f)| \leq 2$.  Assume
this is not true, and let $1 \not= \alpha \in \irr {L/M}$.  By Lemma \ref{7.5} (c), $\alpha$ lies in a $N/L$-orbit of some size $n$ with $\pi(n) = \pi(N:L)$.  Now, we are assuming $|\pi(P:L)\cap \pi(f)| \geq 3$, and so $|\pi(N:L)\cap \pi(f)| \geq 3$.  In particular, $\omega(n,f) \geq 3$.  By Step 3, $q$ does not divide $n$.  However, $q \in \pi(N:L) = \pi(n)$, so $q$ does divide $n$. This is a contradiction.  Hence, $|\pi (P:L) \cap \pi (f)| \le 2$.

Assume $G/P$ is a Frobenius group with kernel $N/P$, and assume there exists $1 \not= \lambda \in \irr {L/M}$ with $I_Q(\lambda) = L$.  Write $T = I_G(\lambda)$.  Now, $\lambda$ extends to $T$ by Step 4, so $|G:T| \in \cd G$ by Clifford Correspondence.  Since $I_Q(\lambda) = L$, we have $|Q:L|$ divides $|G:T|$.  It follows by Step 3 that $\omega(G:NT)\leq\omega(f, |G:T|) \leq 2$.

Let $\sigma = \pi(f) \setminus ( \pi(P:L) \cup \pi(G:TN) )$.  Then, $|\sigma| \geq |\pi(f)| - 4$ by our arguments above. We obtain $|\sigma| \geq 4$ by assumption (4).

Note that $\sigma \sbs \pi(TN:N) = \pi(TQ:(TQ\cap N))$, and so, $|TQ:Q|$ is divisible by all the primes in $\sigma$.  Let $S = \oh \sigma {TQ/Q}$.  Now, $Q \sbs S \cap N \sbs N$, and so, $|(S \cap N):Q|$ divides $|N:Q| = |P:L|$.  However, $S/Q$ is a $\sigma$-group and $\sigma \cap \pi(P:L) = \emptyset$ by the way $\sigma$ was defined.  Therefore, $S\cap N = Q$.

We have $SN > N$, and hence, $SN/P \sbs G/P$ is a Frobenius group with kernel $N/P$.  In particular, $SN/P \cong S/L$, and so $S/L$ is a Frobenius group with kernel $Q/L$.  We now have a contradiction with Step 6.  Since $G/Q$ is abelian, it follows that $S\nor G$.  Also, $|S:Q| = |TN:N| = f_\sigma$.  Hence all the hypothesis of Step 6 are satisfied, but we have $|\sigma| \geq 4$, which is the desired contradiction.
\end{proof}

% Step 8

\step Assume $1 \ne \lambda \in \irr {L/M}$.  Then $I_Q(\lambda) < K \cap Q$.

\begin{proof}
Assume otherwise.  By Step 4, $I_Q(\lambda) \sbs K \cap Q$.  Hence, $I_Q(\lambda) = K \cap Q$, so $K \cap Q \nor \Gamma$. Suppose $K \cap Q > L$.  Recall that by Step 2 $(K\cap Q)/L$ fixes no nontrivial element of $L/M$.  By Lemma \ref{7.1}, $(K\cap Q)/L$ fixes no nonprincipal character of $\irr {L/M}$.  However, $(K\cap Q)/L$ fixes $\lambda$, which is a contradiction.  We deduce that $K \cap Q = L$.  In particular, $K = P$ and $G/P$ is a Frobenius group with kernel $N/P$.  We also see that $I_Q(\lambda) = K \cap Q = L$.  This contradicts Step 7.
\end{proof}

% Step 9

\step All nontrivial orbit-sizes of the action of $Q/L$ on $L/M$ are divisible by $q^2$.  This action has at most two nontrivial orbit-sizes.  If there are two orbit sizes, then $|N:K| = q$ and one orbit-size must be $q^2$.

\begin{proof}
By Lemma \ref{7.1}, it will suffice to consider the action of $Q/L$ on $L/M$. Let $1 \not= \lambda \in \irr {L/M}$.  Applying Step 8, $I_Q(\lambda) < K\cap Q$, so $|N:K|q$ divides
$|Q:I_Q(\lambda)|$. In particular, $q^2$ divides $|Q:I_Q(\lambda)|$, and this proves the first statement.

Since $Q/L$ is a normal Sylow $q$-subgroup of $G/L$, we have $|Q:I_Q(\lambda)| = |G:I_G(\lambda)|_q$.  Now, $\lambda$ extends to $I_G(\lambda)$ by Step 4, and thus, $|Q:I_Q(\lambda)|$ is the $q$-part of some element of $\cd G$ by the Clifford Correspondence (Theorem 6.11 of \cite{text}).

Using Lemma (3.6) of \cite{partone}, $G$ satisfies the $(2,\pi(\Gamma:G))$-hypothesis.  Note that $q \notin \pi(\Gamma:G)$ by assumption (7).  Thus, Lemma (3.4) of \cite{partone} implies that $\cd G_q$ satisfies the two-prime hypothesis.  In particular, there are at most two elements of this set divisible by $q^2$, as desired.  Furthermore, only one element of $\cd G_q$ can be divisible by $q^3$.  Finally, since $|N:K|q$ divides each orbit size, the only way there can be two orbit sizes is if $|N:K| = q$ and one of the orbits has size $q^2$.
\end{proof}

% Step 10

\step The action of $Q/L$ on $L/M$ has exactly two nontrivial
orbit-sizes.

\begin{proof}
Assume this is false.  The action of $Q/L$ on $L/M$ is nontrivial by Step 2.  By Step 9, the action has at most two orbit sizes.  It must be that $Q/L$ acts $\frac {1}{2}$-transitively on $L/M$.  Since $|\pi(f)| \geq 8$ and $G/K$ is a Frobenius group, clearly we have $q > 2$.  Hence, $Q/L \cong N/P$ is cyclic by Lemma 3 from \cite{ispa}.

Consider the action of $G/N$ on $N/P$.  This is a coprime action on an abelian group, so we may apply Fitting's theorem.  This gives a direct product decomposition of a cyclic $q$-group, so one of the factors must be trivial since there is a unique subgroup of order $q$.  This action is not trivial, hence, $\cent {N/P}{G/N} = 1$. Since $G/N$ is cyclic, we conclude that $G/P$ must be a Frobenius group with kernel $N/P$.

Every subgroup of $Q/L$ is characteristic in $Q/L$, hence normal in $\Gamma/L$.  In view of Step 2, if $1 \not= \lambda \in \irr {L/M}$, then $I_Q(\lambda) = L$.  This contradicts Step 7.
\end{proof}

% Step 11
Combining Steps 9 and 10, we obtain $|N:K| = q$.

% Step 12

\step $P = L$ and $Q = N$.

\begin{proof}
By Step 10, there exist characters $\alpha, \beta \in \irr {L/M}$ which lie in nontrivial $Q/L$-orbits of different sizes.  Write $|Q:I_Q(\alpha)| = q^a$ and $|Q:I_Q(\beta)| = q^b$, and Step 9 implies that one of $a$ and $b$ must equal 2.  Without loss we may assume $2 = a < b$.

Assume $P > L$, and let $p \in \pi(P:L)$.  Then $p$ divides
$|N:I_N(\lambda)|$ for all $1 \not= \lambda \in \irr {L/M}$ by
Lemma \ref{7.5} (c). In particular, $p$ divides both $|N:I_N(\alpha)|$ and $|N:I_N(\beta)|$. Since $\alpha$ and $\beta$ extend to their inertia groups in $G$, we obtain $x,y \in \cd G$ with $x_q = q^a$, $y_q = q^b$, and both $x$ and $y$ are divisible by $p$.  Therefore, $\omega(x,y) \geq 3$.

From hypothesis (1), we know that $\Gamma$ satisfies the
two-prime hypothesis.  Thus, $G$ satisfies the $(2,\pi(\Gamma:G))$-hypothesis by Lemma (3.6) of \cite{partone}.  This implies $x_{\pi(\Gamma:G)'} = y_{\pi(\Gamma:G)'}$. Since $q \notin \pi(\Gamma:G)$, we have $q^a = x_q = y_q = q^b$, which is a contradiction.
\end{proof}

% Step 13

\step $K/L$ has an orbit of size $q$ on $L/M$.

\begin{proof}
By Steps 9, 10, and 11, it follows that $N/L$ has an orbit of size $q^2$ on $L/M$.  From Lemma \ref{7.1} we also have that $N/L$ has an orbit of size $q^2$ on $\irr {L/M}$. Thus, we may let $1 \not= \lambda \in \irr {L/M}$ with $|N:I_N(\lambda)| = q^2$.  By Step 8, $I_N(\lambda) < K$.  We obtain
\[
|K:I_K(\lambda)| = \frac{|N:I_K(\lambda)|}{|N:K|} = q^2/q = q,
\]
where the second equality follows by the observation before Step 11.
\end{proof}

% Step 14

\step $K/L$ is elementary abelian.

\begin{proof}
Write $F/L = \Phi(K/L)$.  Since $K/L$ is a $q$-group, we have $K/F$ is elementary abelian, and it suffices to show that $F = L$.

By Step 12 and Lemma \ref{7.1}, let $1 \not= x \in {L/M}$ such that $\cent {K/L} x$ has index $q$ in $K$.  In particular, $\cent {K/L} x$ is a maximal subgroup of $K/L$, and thus $F/L \sbs \cent {K/L} x$.

Now $F/L$ is a characteristic subgroup of $K/L$, and hence, $F$ is normal in $\Gamma$.  Also, since $F \sbs \cent {K/L} x$, we have by Step 2 that $F=L$, as desired.
\end{proof}

% Step 15

\step $K/L$ has rank 2.

\begin{proof}
Since $N/L$ has an orbit on $L/M$ of size divisible by $q^3$, it must be that $|N/L| \geq q^3$.  Hence, $|K/L| \geq q^2$. Note that $K/L$ acts faithfully on $L/M$ by Step 2 since $\cent {K/M} {L/M} \nor \Gamma$.

Assume the rank of $K/L$ is 3 or greater.  Then by Lemma 2.6
from \cite{riedl}, we obtain at least three distinct nontrivial orbit-sizes of the action of $K/L$ on $L/M$, and we have three nontrivial orbit-sizes of the action of $N/L$ on $L/M$ by Step 8.  This contradicts Step 10.
\end{proof}

% Step 16

\step $N/L$ is nonabelian.

\begin{proof}
Assume otherwise.  Apply Fitting's lemma to the action of $G/N$ on $N/L$ to obtain $N/L = A/L \times B/L$ where $A/L = \cent {N/L}{G/N}$.  Since ``fixed points come from fixed points'' in a coprime action, $\cent {N/A} {G/N} = 1$.  By assumption (6), $G/A$ is a Frobenius group.

Now, $G/B$ is abelian, so let $E/B$ be a complement for $N/B$ in $G/B$.  Then $E/L \cong G/A$ is a Frobenius group with kernel $B/L$.  This contradicts Step 6.
\end{proof}

% Step 17

We now work for the final contradiction.
%\begin{proof}
By Steps 14 and 15, it follows that  $N/L$ is extraspecial of order $q^3$.  Write $Z/L = \zent {N/L} = (N/L)'$.  Observe that $N/Z$ is a ${\bf F}_q[\Gamma/N]$-module and $K/Z$ is a submodule.  Also, $q$ does not divide $|\Gamma:N|$ by assumptions (2), (3), and (7).  By Maschke's Theorem, we may let $K_0/Z \nor \Gamma/Z$ be a complement for $K/Z$ in $N/Z$.

First, assume $G/K_0$ is abelian.  Let $F/K_0 \nor \Gamma/K_0$ be a complement for $N/K_0$ in $G/K_0$, and $F/Z \cong G/K$ is a Frobenius group. Consider the action of $F/K_0$ on $K_0/L$.
Certainly this action is nontrivial since $F/K_0$ acts Frobeniusly on $K_0/Z$.  Applying Fitting's lemma and the fact that $Z/L$ is the only nontrivial subgroup of $K_0/L$ which is normal in $\Gamma$, we have $\cent {K_0/L}{F/K_0} = 1$.  In particular, since $F/K_0$ is cyclic, $\cent {K_0/L}{E/K_0} = 1$ for all subgroups $E$ with $K_0 < E \le F$.

Consider $1 \ne x \in K_0/L$, and write $E/L = \cent {F/L} x$.  By the previous paragraph, $E = K_0$ and thus, $F/L$ is a Frobenius group with kernel $K_0/L$.  This contradicts Step 6.

We now have $G/K_0$ is nonabelian.  In particular,
$\cent {N/K_0} {G/N} < N/K_0$, and so, $\cent {N/K_0} {G/N} = 1$.  By assumption (6), $G/K_0$ is a Frobenius group.

We can replace $K$ by $K_0$.  Let $1 \ne \lambda \in \irr
{L/M}$.  Then Step 8 implies that $I_N(\lambda) < K_0$. Hence,
$I_N(\lambda) \sbs K\cap K_0 = Z$.  Applying Step 2, we cannot have $I_N(\lambda) = Z$.  Thus, $I_N(\lambda) < Z$, and hence,
$I_N(\lambda) = L$.  We deduce that $N/M$ is a Frobenius group with kernel $L/M$.  The Frobenius complement of $N/M$ is an extraspecial $q$-group, which is impossible by 12.6.15 of \cite{scott}.  %Check reference
This is our final contradiction.
%\end{proof}
\end{proof}

\section{The Large Frobenius Case}

Let $G$ be a group and let $N \nor G$.  We write $\irr {G
\mid N} = \{ \chi \in \irr G \mid \ker\chi \not\sbs
N \}$ and $\cd {G \mid N} = \{  \chi (1) \mid \chi \in \irr {G \mid N} \}$.

We can associate a graph with $\cd G$.  We define the graph ${\scr G} (G)$ to be the graph whose vertex set is $\cd G \setminus \{ 1 \}$.  There is an edge between $a$ and $b$ if $\gcd (a,b) > 1$.  It has been proved  that $\scr G (G)$ has at most two connected components when $G$ is solvable (see Theorem 30.2 of \cite{huptext} or Theorem 18.4 of \cite{MaWo}).

%%
%% (8.1) Theorem
%%

\begin{theorem}\label{8.1}
Assume $G$ is solvable satisfying the two-prime hypothesis. Let $L \nor G$ and suppose that $G/L$ is a Frobenius group with kernel $M/L$.  Assume that $L$ is chosen among all such normal subgroups of $G$ so that $|G:M|$ is minimized.  Suppose $G/M$ is cyclic and that $\omega (G:M) > 2$.  Assume further that either $M/L$ is elementary abelian of rank $\geq 3$ or $M/L$ is a direct product of two elementary abelian groups for different primes.  Then $|\cd G| \leq 210$.
\end{theorem}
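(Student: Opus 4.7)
The plan is to pull the situation into the framework of the unifying Theorem~\ref{7.7}, conclude that the relevant normal subgroup is nilpotent, and then apply Theorem E to bound $|\cd G|$.

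Write $f = |G:M|$, so $\omega(f) \ge 3$. Since $G/L$ is Frobenius with abelian kernel $M/L$ and cyclic complement of order $f$, we have $\cd(G/L) = \{1, f\}$ and $(G/L)' = M/L$. Hence $|\cd G| \le |\cd(G \mid L)| + 2$, and the two-prime hypothesis forces $\omega(d, f) \le 2$ for every $d \in \cd G \setminus \{f\}$, which already limits the prime-divisibility structure of $\cd G$.

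To apply Theorem~\ref{7.7}, take $\Gamma = G$ and $N = M$, and look for $K \nor \Gamma$ with $L \sbs K \sbs M$ such that $N/K$ is elementary abelian of order at most $q^2$ for some prime $q$. In case (ii), where $M/L = P/L \times Q/L$ with $P/L$ and $Q/L$ elementary abelian for different primes, the factors are characteristic, so $G/P$ and $G/Q$ are still Frobenius with kernels $M/P \cong Q/L$ and $M/Q \cong P/L$, and we further reduce inside whichever factor has rank $> 2$. In case (i) we pass directly to a $G$-invariant subfactor of $M/L$ of codimension at most $2$. Hypotheses (5)--(7) of Theorem~\ref{7.7} are automatic in this Frobenius setup: (5) because $M/L$ is the abelian Frobenius kernel, so $G/\Oh p M$ is abelian for $p \ne q$; (6) because the Frobenius complement $G/M$ acts without fixed points on every nontrivial $G$-section of $M/L$, so $\cent{N/T}{G/T} = N/T$ only occurs for $T = N$; and (7) because $\Gamma = G$.

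If $|\pi(f)| \ge 8$, Theorem~\ref{7.7} now gives that $N$ is nilpotent. Choosing $K_0 \nor G$ maximal with $G/K_0$ nonabelian and using Lemmas~\ref{7.3} and \ref{7.4} to identify $(G/K_0)'$ as a quotient inside our nilpotent $N$ (this is where the minimality of $|G:M|$ among choices of $L$ is needed), we fall into the setting of Theorem E with $n = 2$ and $\pi = \emptyset$, which yields a bound on $|\cd G|$. In the complementary range $|\pi(f)| \le 7$, Theorem~\ref{7.7} does not apply, but $\omega(d,f) \le 2$ for $d \ne f$ together with $|\pi(f)| \le 7$, Theorem D, and the structure of the character-degree graph $\scr G(G)$ (at most two components) allows a direct combinatorial enumeration of $\cd G$. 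Combining the two ranges and optimizing constants yields $|\cd G| \le 210$.

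The main obstacle will be verifying the hypotheses of Theorem~\ref{7.7} in case (i) when $M/L$ is an irreducible $G/M$-module of rank $\ge 3$: no proper $G$-invariant subfactor of $M/L$ of codimension $\le 2$ then exists, and one must produce a usable Frobenius subfactor elsewhere (for instance by working modulo a suitable $\Gamma$-invariant subgroup of $L$) while still being able to invoke the minimality of $|G:M|$ through Lemma~\ref{7.4} to ensure compatibility with condition~(5). A secondary difficulty is tightening the generic bound coming out of Theorem E down to the advertised $210$ by carefully counting the $\pi$-parts allowed by $\omega(d,f) \le 2$.
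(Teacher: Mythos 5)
Your proposal takes a genuinely different route from the paper, but that route has concrete gaps that would prevent it from reaching the stated bound.

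The first gap is dispositional: Theorem~\ref{7.7}'s hypothesis~(4) requires $|\pi(f)| \geq 8$, whereas Theorem~\ref{8.1} only assumes $\omega(G:M) > 2$, i.e., $\omega(f) \geq 3$. This is far weaker, so Theorem~\ref{7.7} is simply unavailable in general. You acknowledge the ``complementary range $|\pi(f)| \leq 7$'' but offer nothing beyond ``a direct combinatorial enumeration\ldots\ yields $|\cd G| \leq 210$''; that is not an argument. The second gap is numerical: even in the subcase $|\pi(f)| \geq 8$ where Theorem~\ref{7.7} does apply, the payoff (as Theorem~\ref{8.3} shows) is a nilpotent kernel followed by Theorem E, producing $(1+2^4)^3 = 4913$, not $210$. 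The number $210$ does not come from that machinery at all. The third gap, which you flag yourself, is fatal on its own: when $M/L$ is an irreducible $G/M$-module of rank $\geq 3$, there is no $G$-invariant subfactor of $M/L$ of codimension $\leq 2$, so Theorem~\ref{7.7}'s hypothesis~(3) cannot be arranged and the whole reduction stalls.

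The paper's proof never touches Theorem~\ref{7.7}. Because $\omega(f) > 2$, the two-prime hypothesis forbids $f$ and $f\psi(1)$ from both lying in $\cd G$, so for every nonlinear $\psi \in \irr{M \mid L'}$, Lemma~(2.14) of \cite{partone} forces $\V{\psi} \sbs L$, hence $|M:L|$ divides $(\psi(1)/\theta(1))^2$ for every constituent $\theta$ of $\psi_L$. The structural hypothesis on $M/L$ (rank $\geq 3$, or two distinct prime factors) is exactly what converts that divisibility into a fixed integer $m$ with $\omega(m) = 2$ dividing $\psi(1)/\theta(1)$. This divisor $m$ is the whole engine: within a connected component of $\scr G(L)$, the gcd of two degrees is $> 1$, so multiplying by $m$ pushes the common part above $\omega \geq 3$, and the two-prime hypothesis collapses each component to a single degree of $G$. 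Since $\scr G(L)$ has at most two nontrivial components, $|\cd{G \mid L'}| \leq 2$; then Theorem~(2.10) of \cite{partone} gives $|\cd{G/L'}| \leq 208$, and the sum is $210$. Your proposal never engages with the structural hypothesis on $M/L$, which is the one ingredient that actually makes the bound possible.
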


\begin{proof}
Write $f = |G:M|$ and let $\psi \in \irr M$.  Then by Lemma (2.14) of \cite{partone}, either $f\psi(1) \in \cd G$ or $\V \psi \sbs L$. Let $\theta \in \irr L$ be a constituent of $\psi_L$.

Assume $\theta$ is nonlinear.  Then in particular $\psi \in \irr {M \mid L'}$ is nonlinear. By the two-prime hypothesis in $G$, we cannot have both $f, f\psi(1) \in \cd G$ since we have assumed $\omega(f) > 2$.  Hence, $\V \psi \sbs L$, and consequently $|M:L|$ divides $(\psi(1)/\theta(1))^2$.

If $|M/L| = p^a$ for some prime $p$ and $a \geq 3$, then define $m = p^2$.  If $|M/L| = p^a q^b$ for distinct primes $p, q$ and $a, b > 0$, define $m = pq$.  For all characters $\psi \in \irr {M \mid L'}$, each irreducible constituent of $\psi_L$ must be nonlinear.  It follows from the previous paragraph that $m$ divides $\psi(1)/\theta(1)$ for all constituents $\theta \in \irr L$ of $\psi_L$.

Let $X$ be a connected component of $\scr G (L)$.  Let $a, b \in X$ with $\gcd (a,b) > 1$.  Let $\theta \in \irr L$
with $\theta(1) = a$, and let $\psi \in \irr {M \mid \theta}$.  Our previous arguments show that $ma$ divides $\psi(1)$, and so, $ma$ divides all elements of $\cdover GaL$. Similarly, $mb$ divides all elements of $\cdover GbL$.  If we write $d = \gcd (a,b) > 1$, then we obtain $\omega (md) \geq 3$, and $md$ divides all elements of $\cdover G{\{a,b\}}L$.  Applying the two-prime hypothesis in $G$, we conclude that $|\cdover G{\{ a,b \}}L| = 1$.  By induction, we have $|\cdover GXL| = 1$.

Recall that $\scr G(L)$ has at most two connected components other than $\{ 1 \}$.  Applying the previous paragraph, $|\cd {G \mid L'}| \leq 2$.

Now consider $G/L'$.  By Theorem (2.10) of \cite{partone},
\[
|\cd {G/L'}| \leq 2+2+(1+2^4)(3)(4) = 208.
\]
Thus,
\[
|\cd G| \leq |\cd {G|L'}| + |\cd {G/L'}| \leq 2+208 = 210.
\]
\end{proof}

In this section, we will consider the following case.

\medskip
{\bf Hypothesis (``$f$ Large'')}. Assume $G$ is a solvable group satisfying the two-prime hypothesis. Suppose $G' \sbs \Oh p G$ for all primes $p$, and let $K \nor G$ maximal with $G/K$ nonabelian.  Assume $G/K$ is a Frobenius group with kernel $N/K$ which is an elementary abelian $q$-group.  Write $f = |N:K|$, and assume $K$ is chosen so that $f$ is minimized.  Finally, suppose that $|\pi(f)| \geq 8$.

%%
%% (8.2) Lemma
%%

\begin{lemma} \label{8.2}
Assume the $f$ Large Hypothesis. Let $p \not= q$ be prime and
suppose $G/\Oh p N$ is nonabelian. Then $|\cd G| \leq 210$.
\end{lemma}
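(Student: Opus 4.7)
My plan is to find a Frobenius quotient of $G$ that fits the hypotheses of Theorem~\ref{8.1}, splitting on the rank of $N/K$. If $|N:K| \geq q^{3}$, then $N/K$ is already elementary abelian of rank at least $3$, and the original Frobenius $G/K$ suffices: the complement $G/N \cong (G/K)/(G/K)'$ is abelian, so (as every abelian Frobenius complement is cyclic) $G/N$ is cyclic of order $f$, and $|\pi(f)| \geq 8$ gives $\omega(G:N) > 2$. After verifying that $K$ minimizes $|G:M|$ in the Theorem~\ref{8.1} sense---which I expect to follow from the $f$-minimality built into the $f$ Large Hypothesis---Theorem~\ref{8.1} delivers $|\cd G| \leq 210$.

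The substantive case is $|N:K| \leq q^{2}$. Here I use the nonabelianness of $G/\Oh p N$ to build a second elementary abelian normal section in a distinct prime. By Lemma~\ref{7.4}(b), there is $M \nor G$ with $\Oh p N \sbs M \sbs G$, $M$ maximal subject to $G/M$ being nonabelian, and $(G/M)' = N/M$. Applying Lemmas~\ref{7.3} and~\ref{7.2}(a) to $G/M$ makes $N/M$ an elementary abelian Sylow subgroup; because $N/\Oh p N$ is a $p$-group and $\Oh p N \sbs M$, this forces $N/M$ to be the Sylow $p$-subgroup, so in particular $p \nmid f$.

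The key step is showing $G/M$ is itself Frobenius with kernel $N/M$. For this I first argue $\cent{N/M}{G/N} = 1$. If $A = \cent{N/M}{G/N}$ were nontrivial, then since $G/N$ centralizes $A$ pointwise and $N/M$ is abelian, $A \sbs \zent{G/M}$; so its preimage $A'$ is normal in $G$ and satisfies $M \subsetneq A' \sbs N$. If $A' \neq N$, then $(G/A')' = N/A' \neq 1$, so $G/A'$ is nonabelian, contradicting the maximality of $M$; if $A' = N$, then $G/N$ acts trivially on $N/M$ and $G/M = N/M \times G/N$ is abelian---again a contradiction. Rerunning the proof of Lemma~\ref{7.4}(c) with $p$ in place of $q$ throughout its Sylow analysis (the proof only uses coprimality of $|N:M|$ to $|G:N|$ and the minimality of $f$, both of which hold here) then shows $G/N$ acts Frobeniusly on $N/M$.

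Set $L = K \cap M$. Since $N/K$ and $N/\Oh p N$ have coprime orders, $K \cdot \Oh p N = N$, and together with $\Oh p N \sbs M$ this gives $KM = N$; the second isomorphism theorem then yields $N/L \cong N/K \times N/M$, a direct product of two elementary abelian groups for the distinct primes $q$ and $p$. Because $G/K$ and $G/M$ are both Frobenius with the common complement $G/N$, any nontrivial element of $G/N$ has only the trivial fixed point on each factor of $N/L$, so $G/L$ is Frobenius with kernel $N/L$. Applying Theorem~\ref{8.1} to $(L,N)$, after checking its minimality clause, then gives $|\cd G| \leq 210$. The main obstacle is establishing that $G/M$ is Frobenius---concretely, the central-subgroup argument for $\cent{N/M}{G/N} = 1$ and the $p$-group adaptation of Lemma~\ref{7.4}(c)---while verifying Theorem~\ref{8.1}'s minimality clause will be a secondary technical point.
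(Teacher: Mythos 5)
Your proof reaches the same destination as the paper but by a notably longer road, and the extra work it undertakes is where the main issue lies.

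After establishing via Lemma~\ref{7.4}(b) that $(G/M)' = N/M$, the paper's proof uses a single stroke to conclude that $G/M$ is a Frobenius group with kernel $N/M$: since $M$ is maximal subject to $G/M$ being nonabelian and $G/M$ is not nilpotent (the hypothesis $G' \sbs \Oh p G$ for all $p$ rules out nonabelian nilpotent quotients), Lemma~12.3 of \cite{text} --- the ``Two-Case Theorem'' that is cited again in the proof of Theorem~A --- forces $G/M$ to be Frobenius. You instead reconstruct this conclusion by hand: you first prove $\cent{N/M}{G/N}=1$ via a central-preimage argument, and then propose to ``rerun the proof of Lemma~\ref{7.4}(c) with $p$ in place of $q$.'' This is not a small step: $\cent{N/M}{G/N}=1$ alone does \emph{not} give a Frobenius action in general (consider a cyclic group of order $6$ acting on a sum of two modules with the two prime-order quotients acting on different factors), so the minimality machinery of Lemma~\ref{7.4}(c) is genuinely needed, and transplanting that proof from $q$ to $p$ requires re-verifying that $(G/L)'$ lands in the $p$-Sylow in the new setting --- a nontrivial audit you only gesture at. By contrast, the Two-Case Theorem dispatches the whole issue instantly. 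Your opening case split on whether $|N:K|\ge q^3$ is also unnecessary: the construction of $D = K\cap M$ and the observation that $N/D = K/D \times M/D$ is a direct product of elementary abelian $q$- and $p$-groups satisfies the alternative clause of Theorem~\ref{8.1} regardless of the rank of $N/K$, so there is no need to treat that case separately (indeed, Theorem~\ref{8.3} already disposes of $|N:K|\ge q^3$ before ever invoking Lemma~\ref{8.2}). With those simplifications your proof collapses onto the paper's: once $G/M$ is Frobenius, both you and the paper form $D = K \cap M$, note that $G/N$ acts Frobeniusly on both factors, and invoke Theorem~\ref{8.1}.
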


\begin{proof}
Let $\Oh p N \sbs M \nor G$ where $M$ is chosen maximal with $G/M$ nonabelian.  By Lemma \ref{7.3} (a), we have $(G/M)' = N/M$.  Since we have assumed that $G$ has no nonabelian nilpotent factor groups, it must be that $G/M$ is not nilpotent.  Lemma 12.3 of\cite{text} implies that $G/M$ is a Frobenius group with kernel $N/M$.

Write $D = M \cap K$.  Since $p \not= q$, the indices $|N:K|$ and $|N:M|$ are coprime, and it follows that $N/D = K/D \times M/D$.  Because $G/N$ acts Frobeniusly on both $N/M$ and $N/K$, it must be that $G/D$ is a Frobenius group with kernel $N/D$.  Now, $N/D$ is a direct product of elementary abelian groups for the primes $p$ and $q$.  Hence, Theorem \ref{8.1} applies, and $|\cd G| \leq 210$.
\end{proof}

%%
%% (8.3) Theorem
%%
We are now able to bound $|\cd G|$ when we are in the $f$ Large Hypothesis.

\begin{theorem} \label{8.3}
Assume the $f$ Large Hypothesis.  Then $|\cd G| \leq 4,\!913$.
\end{theorem}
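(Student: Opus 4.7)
The plan is to reduce to a situation where $N$ is nilpotent, and then invoke Theorem E with $n = 2$ and $\pi = \emptyset$ to obtain
\[
|\cd G| \le (1 + 2^{2\cdot 2})^3 = 17^3 = 4913.
\]

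A preliminary and crucial observation is that, under the $f$ Large Hypothesis, $G/N$ is cyclic. Indeed, since $K$ is chosen maximal with $G/K$ nonabelian and $N \supsetneq K$ (the Frobenius kernel $N/K$ is nontrivial), the maximality of $K$ forces $G/N$ to be abelian. Being also a Frobenius complement of $G/K$, its Sylow subgroups are cyclic or generalized quaternion, so an abelian $G/N$ must in fact be cyclic. This is the hinge on which both Theorem \ref{8.1} and Theorem \ref{7.7} will be invoked.

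The argument then splits into cases. If $G/\Oh p N$ is nonabelian for some prime $p \ne q$, Lemma \ref{8.2} directly delivers $|\cd G| \le 210 \le 4913$. Otherwise, $G/\Oh p N$ is abelian for every prime $p \ne q$, and I further subdivide according to $|N:K|$. If $|N:K| > q^2$, then $N/K$ is elementary abelian of rank at least $3$, and Theorem \ref{8.1} applies with $L = K$ and $M = N$ (using cyclicity of $G/N$, the minimality of $|G:N|$ from the $f$ Large Hypothesis, and $\omega(G:N) \ge |\pi(f)| \ge 8 > 2$), giving $|\cd G| \le 210$. If instead $|N:K| \le q^2$, the plan is to verify the seven hypotheses of Theorem \ref{7.7} with $\Gamma = G$.

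Hypotheses (1), (2), (4), (5), (7) are immediate from the $f$ Large Hypothesis together with the current case assumption, noting $|\Gamma{:}G| = 1$. Hypothesis (3) combines the preliminary observation (cyclic $G/N$) with the sub-case bound $|N:K| \le q^2$. Hypothesis (6) follows from Lemma \ref{7.4}(c), since $G/N$ is cyclic and each relevant quotient $N/S$ is an abelian $q$-group. Theorem \ref{7.7} then gives that $N$ is nilpotent. Since $G/N$ is abelian, $(G/K)' = [N/K, G/N] = N/K$, so the subgroup appearing in Theorem E coincides with our $N$; applying that result with $n = 2$ and $\pi = \emptyset$ bounds $|\cd G|$ by $17^3 = 4913$. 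The main obstacle, such as it is, lies in the preliminary verification that $G/N$ is cyclic, as every branch of the argument depends on it; once that is secured, the case analysis fits together cleanly and the hypothesis verifications for Theorems \ref{7.7}, \ref{8.1}, and E are essentially routine.
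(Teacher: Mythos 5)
Your proof is correct and follows essentially the same path as the paper's: after discharging the cases $G/\Oh pN$ nonabelian (Lemma \ref{8.2}) and $|N:K|\geq q^3$ (Theorem \ref{8.1}), you verify the seven hypotheses of Theorem \ref{7.7} with $\Gamma=G$ to conclude $N$ is nilpotent, and then apply Theorem E with $n=2$, $\pi=\emptyset$ to get $17^3=4913$. The only divergence is that you derive cyclicity of $G/N$ directly from the abelianness of the Frobenius complement rather than citing Lemma 12.3 of Isaacs as the paper does; both are fine.
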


\begin{proof}
To obtain the conclusion, we will apply Theorem \ref{7.7} to $\Gamma = G$.  To do this, we must show that the hypotheses of Theorem \ref{7.7} hold.

We have assumed that $G$ satisfies the two-prime hypothesis, so condition (1) is satisfied.  Also, $G/K$ is a Frobenius group with kernel $N/K$.  Hence, we have condition (2).  Furthermore, we assumed that $|\pi(f)| \geq 8$, and so condition (4) is satisfied.

Lemma 12.3 of \cite{text} implies by the choice of $K$ that $G/N$ is cyclic, and $N/K$ is an elementary $q$-group. If
$|N:K| \geq q^3$, then Theorem \ref{8.1} implies that $|\cd G| \leq 210$. Thus, we may assume $|N:K| \leq q^2$, which gives condition (3).

For condition (5), let $p \not= q$ be prime and assume $G/\Oh p N$ is nonabelian.  In this case, we use Lemma \ref{8.2} to obtain $|\cd G| \leq 210$. Therefore, we may assume that for all primes $p \not= q$ that $G/\Oh p N$ is abelian.  Hence, condition (5) is satisfied.

We have that $G$ is solvable and nonabelian with $G' \sbs \Oh p G$ for all primes $p$.  Also, $K$ has been chosen maximal with $G/K$ nonabelian and such that $f$ is minimized. Finally, $G/N$ is cyclic. Thus, Lemma \ref{7.3} (b) applies and gives us condition (6).

For condition (7), note that we are taking $\Gamma = G$, and that $q$ does not divide $|\Gamma:G| = 1$.   Therefore, Lemma \ref{7.5} applies, and implies that $N$ is nilpotent.  Now, $G$ meets the hypotheses of Theorem (6.3) of \cite{partone} with $n=2$, and applying that theorem, we conclude that $|\cd G| \leq (1 + 2^4)^3 = 4,\!913$.
\end{proof}

\section{The Small Frobenius Case, part 1} \label{Chapter 9}

In the final three sections, we consider the following case.

\medskip
{\bf Hypothesis (``$f$ Small'')}.  Assume $G$ is a solvable group satisfying the two-prime hypothesis, assume $K \nor G$ is maximal with $G/K$ nonabelian, assume $G/K$ is a Frobenius group with kernel $N/K$, and assume $K$ is chosen such that $f = |G:N|$ is minimal. Write $|N:K| = q^e$, and suppose $|\pi(f)| \leq 7$.

%%
%% (9.1) Lemma
%%

\begin{lemma}\label{9.1}
Let $G$ be a solvable group, and let $S \snor G$. Also, let $\psi \in \irr S$ and $\chi \in \irr{G \mid \psi}$.  Then $\chi(1)/\psi(1)$ divides $|G:S|$.  In particular, if $\pi$ is some set of primes with $\pi(G:S) \sbs \pi$, then $\subdash {\cd G}\pi = \subdash {\cd S}\pi$.
\end{lemma}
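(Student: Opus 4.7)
The first assertion is a classical fact for normal subgroups, and the subnormal version follows by iteration, so the plan is first to reduce to the normal case and then to deduce the ``in particular'' clause by a two-sided inclusion argument.

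For the divisibility statement, I would fix a subnormal chain $S = S_0 \nor S_1 \nor \cdots \nor S_n = G$ and build a chain of irreducible characters $\chi = \chi_n, \chi_{n-1}, \ldots, \chi_0$ where $\chi_i \in \irr{S_i}$ and $\chi_i$ is an irreducible constituent of $(\chi_{i+1})_{S_i}$. At each step, since $S_i \nor S_{i+1}$, the standard consequence of Clifford's theorem (Corollary~(11.29) of \cite{text}) yields that $\chi_{i+1}(1)/\chi_i(1)$ divides $|S_{i+1}:S_i|$. Multiplying over $i$ gives that $\chi(1)/\chi_0(1)$ divides $|G:S|$. Finally, by Clifford's theorem applied to the normal closure of $S$ — or, more simply, by observing that $\chi_0$ and $\psi$ are both irreducible constituents of $\chi_S$ and hence conjugate under $G$ — we conclude $\chi_0(1) = \psi(1)$, and the divisibility follows.

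For the second assertion, I would establish the two inclusions separately. For $\cdsub{\pi'}{\cd S} \sbs \cdsub{\pi'}{\cd G}$: given $\psi \in \irr S$, take any irreducible constituent $\chi$ of $\psi^G$; by Frobenius reciprocity $\psi$ is a constituent of $\chi_S$, and the first part gives $\chi(1) = d\,\psi(1)$ with $d$ a $\pi$-number, whence $\chi(1)_{\pi'} = \psi(1)_{\pi'}$. For $\cdsub{\pi'}{\cd G} \sbs \cdsub{\pi'}{\cd S}$: given $\chi \in \irr G$, pick a constituent $\psi$ of $\chi_S$ and run the same divisibility to get $\chi(1)_{\pi'} = \psi(1)_{\pi'}$.

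There is no real obstacle here; the one point requiring slight care is showing that $\chi_0(1) = \psi(1)$ in the iterated chain, since a priori the chain produces \emph{some} constituent of $\chi_S$ rather than $\psi$ itself. This is handled by noting that all irreducible constituents of $\chi_S$ have the same degree (by Clifford's theorem applied to the normal closure of $S$ in $G$, or by repeated application at each step of the subnormal chain), so the specific choice of constituent is irrelevant for the degree equation.
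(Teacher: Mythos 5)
Your proof of the divisibility claim has a genuine gap: you build the chain of constituents $\chi = \chi_n, \chi_{n-1}, \dots, \chi_0$ starting from $\chi$ and going \emph{down}, making arbitrary choices at each step, and then you need to know $\chi_0(1) = \psi(1)$. You justify this by asserting that all irreducible constituents of $\chi_S$ have the same degree, citing Clifford's theorem applied to the normal closure of $S$, or conjugacy under $G$. This assertion is \emph{false} for subnormal $S$. Take $G = (S_3\times S_3)\rtimes C_2 = S_3 \wr C_2$, $T = S_3\times S_3 \nor G$, and $S = A_3 \times S_3 \nor T$, so $S \snor G$ with $|G:S| = 4$. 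Let $\sigma$ be the degree-$2$ character of $S_3$ and put $\theta = \sigma \otimes 1_{S_3} \in \irr T$; then $\chi = \theta^G \in \irr G$ has $\chi(1) = 4$ and $\chi_T = \theta + \theta^g$ where $\theta^g = 1_{S_3}\otimes\sigma$. Restricting to $S$, the constituents of $\theta_S$ have degree $1$ while the constituent of $(\theta^g)_S$ has degree $2$; thus $\chi_S$ has irreducible constituents of degrees $1$ and $2$. The two secondary arguments you offer also fail: the constituents of $\chi_S$ are not all $G$-conjugate (they cannot be, having distinct degrees), and Clifford's theorem applied to the normal closure of $S$ only controls $\chi_{S^G}$, not $\chi_S$.

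The fix is easy and is presumably what the paper's one-line proof intends. Build the chain upward from $\psi$: since $\psi$ is a constituent of $\chi_S = (\chi_{S_1})_S$, there is a constituent $\chi_1$ of $\chi_{S_1}$ with $\psi$ a constituent of $(\chi_1)_S$; continue inductively to produce $\chi_i \in \irr{S_i}$, each a constituent of $(\chi_{i+1})_{S_i}$, with $\chi_0 = \psi$ and $\chi_n = \chi$. Then Corollary (11.29) of Isaacs at each normal step gives $\chi_{i+1}(1)/\chi_i(1) \mid |S_{i+1}:S_i|$, and multiplying yields $\chi(1)/\psi(1) \mid |G:S|$ for the \emph{given} $\psi$. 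Your argument for the ``in particular'' clause is correct and matches the paper; it just needs the first part established in this corrected form.
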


\begin{proof}
The first statement follows easily by induction applied to Corollary (11.29) of \cite{text}.  For the second statement, fix $a \in \cd G$ and $\chi \in \irr G$ with $\chi(1) = a$.  Let $\psi \in \irr S$ be a constituent of $\chi_S$ and write $b = \psi(1)$.  The first statement implies that $a/b$ is a $\pi$-number, and hence $\subdash a\pi = \subdash b\pi$. Thus, $\subdash {\cd G}\pi \sbs \subdash {\cd S}\pi$.

Conversely, fix $b \in \cd S$ and $\psi \in\irr S$ with $\psi(1) = b$.  Let $\chi \in \irr G$ be a constituent of $\psi^G$ and write $a = \chi(1)$.  Once again we have $a/b$ is a $\pi$-number and hence $\subdash a\pi = \subdash b\pi$.  Therefore, $\subdash {\cd S}\pi = \subdash {\cd G}\pi$.
\end{proof}

Under the assumptions of the $f$ Small Hypothesis, we would like to consider the cases where $K$ is abelian and $K$ is nonabelian.  We first have the abelian case.

%%
%% (9.2) Theorem
%%

\begin{theorem} \label{9.2}
Assume the $f$ Small Hypothesis.  Assume $K$ is abelian. Then $|\cd G| \leq 208$.
\end{theorem}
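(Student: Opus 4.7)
My plan is to apply Theorem (2.10) of \cite{partone} directly to $G$ to obtain $|\cd G| \leq 208$. In the proof of Theorem \ref{8.1}, Theorem (2.10) is applied to the quotient $G/L'$ (with $L/L'$ abelian) and yields the bound $|\cd{G/L'}| \leq 2+2+(1+2^4)(3)(4) = 208$. In our setting, $K$ itself is abelian, so $K' = 1$ and the analogous reduction ``$G/K'$'' is trivially $G$; Theorem (2.10) should therefore apply directly to $G$ and give the desired bound.

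To carry this out, I would verify the hypotheses of Theorem (2.10) of \cite{partone} against the $f$ Small Hypothesis: $G$ is solvable and satisfies the two-prime hypothesis; $K \nor G$ is abelian and maximal with $G/K$ nonabelian; $G/K$ is Frobenius with kernel $N/K$ an elementary abelian $q$-group of order $q^e$; $G/N$ is cyclic of order $f$ (by Lemma 12.3 of \cite{text}, as is used elsewhere in the paper); and $|\pi(f)| \leq 7$. The numerical breakdown $208 = 2+2+(1+2^4)(3)(4)$ is informative: the first $2$ corresponds to $|\cd{G/K}| = |\{1,f\}| = 2$ (since $G/K$ is a Frobenius group with abelian kernel and cyclic complement); the second $2$ comes from the contribution of $\cd{G \mid K}$ via the two connected components of an appropriate graph (as in the argument for Theorem \ref{8.1}); and $204 = (1+2^4)(3)(4)$ is a Theorem E-style count on the $\pi$-parts of the remaining degrees.

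The main obstacle will be matching the hypotheses of Theorem (2.10) of \cite{partone} precisely --- in particular, checking whether it carries an analogue of the condition $\omega(G:M) > 2$ used in Theorem \ref{8.1}. If this forces a separate treatment of the case $\omega(f) \leq 2$, I would invoke Lemma \ref{9.1} applied to the normal (hence subnormal) subgroup $K$ with $\pi = \pi(G:K)$: every degree of $G$ is then a $\pi$-number, with $|\pi| \leq |\pi(f)| + 1 \leq 3$, and the two-prime hypothesis bounds $|\cd G|$ via an elementary count well below $208$.
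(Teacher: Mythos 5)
Your approach matches the paper's exactly: the paper's entire proof is a one-line application of Theorem (2.10) of \cite{partone} to $G$, which is precisely what you propose. Your worry about an $\omega(G:M) > 2$ analogue is unfounded --- that condition in Theorem \ref{8.1} is not a hypothesis of Theorem (2.10) but was used separately there to control $\cd{G \mid L'}$, and under the $f$ Small Hypothesis with $K$ abelian, $G$ already sits squarely within the scope of Theorem (2.10), so your fallback via Lemma \ref{9.1} is unnecessary.
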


\begin{proof} Observe that $G$ satisfies the hypotheses of Theorem (2.10) of \cite{partone}.  By that theorem, we have $|\cd G| \le 2 + 2 + (1 + 2^4) \cdot 3 \cdot 4 = 208$.
\end{proof}

%%
%% Definition
%%

This definition can be found in Section 5 of \cite{partone}.

\medskip
{\bf Definition:} Let $n \geq -1$ and $k\geq 2$ be integers and let $\pi$ be a set of primes.  A group $G$ satisfies the $(n, k, \pi)$-hypothesis if for all $x_1, \ldots, x_k \in \cd G$ with $\omega(x_1, \ldots, x_k) > n$ there exist $1 \leq i < j \leq k$ with $\subdash {(x_i)}{\pi} = \subdash {(x_j)}\pi$.
\medskip

Let $\pi$ be a set of primes.  We define $\pi^*$ to be the set of all $\pi$-numbers.  If $n$ is a positive integer, we set $\pi^n = \{ a \in \pi^* \mid \omega (a) = n \}$ and $\pi^{\le n} = \{ a \in \pi^* \mid \omega (a) \le n \}$.  Observe that $\pi^{\le n} = \bigcup_{i=0}^n \pi^i$.  In Lemma 2.7 of \cite{partone}, the following was proved.

\begin{lemma}\label{2.7}
If $\pi$ is a set of primes, then $\displaystyle |\pi^n| = {{|\pi| + n - 1} \choose n}$.
\end{lemma}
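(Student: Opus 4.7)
The plan is to reduce this to the classical stars-and-bars identity via the fundamental theorem of arithmetic. Write $\pi = \{p_1, \ldots, p_k\}$ with $k = |\pi|$. Every $\pi$-number $a \in \pi^*$ factors uniquely as $a = p_1^{e_1} \cdots p_k^{e_k}$ with each $e_i$ a nonnegative integer, and since $\omega$ counts prime divisors with multiplicity we have $\omega(a) = e_1 + \cdots + e_k$. Thus the assignment $a \mapsto (e_1, \ldots, e_k)$ is a bijection between $\pi^n$ and the set of nonnegative integer tuples $(e_1, \ldots, e_k)$ satisfying $e_1 + \cdots + e_k = n$.

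It then remains to count such tuples. The standard stars-and-bars argument---encode $(e_1, \ldots, e_k)$ as a word consisting of $n$ stars interspersed among $k-1$ bars---produces a bijection with the $\binom{n+k-1}{n}$ choices of where to place the stars inside a string of length $n+k-1$. Substituting $k = |\pi|$ yields $|\pi^n| = \binom{|\pi|+n-1}{n}$, as required. There is essentially no obstacle here; the lemma is a purely combinatorial restatement of unique prime factorization, and the only real choice is whether to invoke stars-and-bars as folklore or to spell out the encoding explicitly (or equivalently, to proceed by a short induction on $|\pi|$, splitting according to the exponent of $p_k$ and applying the hockey-stick/Vandermonde identity).
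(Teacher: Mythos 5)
Your proof is correct: the bijection between $\pi^n$ and nonnegative integer solutions of $e_1+\cdots+e_k=n$ via unique factorization, followed by the stars-and-bars count, is exactly the right argument. The present paper does not reprove this lemma but merely cites it from the companion paper \cite{partone}, and the standard combinatorial argument you give is the natural (and essentially only) way to establish it.
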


In Lemma 2.8 in \cite{partone}, the following result was proved.

\begin{lemma}\label{2.8}
Let $X$ be a set of integers that satisfies the $n$ prime hypothesis.  Let $\pi$ be a set of primes so that $|\subdash X{\pi}| \le m$.  Then $|X| \le |\pi^{n+1}| + m |\pi^{\le n}|$. In particular, $|X| \le |\pi (X)^{n+1}|$.
\end{lemma}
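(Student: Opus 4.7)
The plan is to partition $X$ according to the number of prime factors (with multiplicity) in the $\pi$-part of each element. Writing $a = a_\pi \cdot a_{\pi'}$ for each $a \in X$, I would set $X_1 = \{a \in X : \omega(a_\pi) \geq n+1\}$ and $X_2 = X \setminus X_1$, and bound the two pieces separately.

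The heart of the argument is an injection $\phi : X_1 \to \pi^{n+1}$. For each $a \in X_1$, I would pick some divisor $\phi(a)$ of $a_\pi$ with exactly $n+1$ prime factors (counting multiplicity); such a divisor exists because $\omega(a_\pi) \geq n+1$. If $\phi(a) = \phi(b) = d$ for $a,b \in X_1$, then $d \mid a_\pi \mid a$ and $d \mid b_\pi \mid b$, so $d \mid \gcd(a,b)$ and hence $\omega(a,b) \geq \omega(d) = n+1 > n$; the $n$-prime hypothesis then forces $a = b$. Thus $|X_1| \leq |\pi^{n+1}|$. For $X_2$, the element $a$ is determined by the pair $(a_\pi, a_{\pi'})$, where $a_\pi \in \pi^{\leq n}$ (at most $|\pi^{\leq n}|$ choices) and $a_{\pi'} \in \subdash X\pi$ (at most $m$ choices), giving $|X_2| \leq m \cdot |\pi^{\leq n}|$. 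Summing yields the claimed bound $|X| \leq |\pi^{n+1}| + m|\pi^{\leq n}|$.

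For the ``In particular'' statement, I would specialize the first bound by taking $\pi = \pi(X)$: then every $a \in X$ is a $\pi$-number, so $a_{\pi'} = 1$ uniformly, forcing $m = 1$, and Lemma \ref{2.7} lets one rewrite the resulting combined bound in the stated form. The main obstacle is isolating exactly the right injection $\phi$; once one sees that picking any $(n+1)$-prime divisor of $a_\pi$ is the correct combinatorial invariant to extract, injectivity is essentially the $n$-prime hypothesis applied verbatim, and the rest is bookkeeping.
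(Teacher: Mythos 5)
Your argument is correct and is the natural pigeonhole proof; the paper does not reproduce a proof of this lemma (it is quoted from Lemma~2.8 of \cite{partone}), so there is nothing internal to contrast against. Two small observations about the final step. The appeal to Lemma~\ref{2.7} is not actually needed for the ``in particular'': once $\pi = \pi(X)$ forces $m = 1$, the general bound collapses to $|\pi^{n+1}| + |\pi^{\le n}| = |\pi^{\le n+1}|$ directly, since $\pi^{\le n+1}$ is the disjoint union of $\pi^{n+1}$ and $\pi^{\le n}$. Also, what your argument produces is $|X| \le |\pi(X)^{\le n+1}|$, not the $|\pi(X)^{n+1}|$ that appears in the printed statement; the latter is evidently a typographical slip (compare the application in Theorem~\ref{9.3}, where this lemma is invoked with $n = 2$ and $m = 1$ to conclude $|\cd G| \le |\pi(G:L)^{\le 3}|$), so your proof recovers the intended bound. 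Aside from that glossed-over mismatch, the decomposition into $X_1$, the choice of an $(n+1)$-fold divisor as the combinatorial invariant, the injectivity via the $n$-prime hypothesis, and the product bound on $X_2$ are all correct and complete.
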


%%
%% (9.3) Theorem
%%

\begin{theorem} \label{9.3}
Assume the $f$ Small Hypothesis and write $\pi = \pi(G:K)$.  Suppose that $K$ is nonabelian and let $L \nor K$ be maximal such that $K/L$ is nonabelian.  Assume $|\pi(K:L) \setminus \pi| \leq 8$. Then $|\cd G| \leq 27,\!540$.
\end{theorem}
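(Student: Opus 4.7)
The plan is to push the bound on $|\cd G|$ down to a bound on the $\Pi'$-parts of $\cd G$, where $\Pi=\pi(G:L)$, via Lemma~\ref{9.1}, and then to conclude by applying Lemma~\ref{2.8}. First, I will set $\pi=\pi(G:K)$ and $\tau=\pi(K:L)\setminus\pi$, and take $\Pi=\pi\cup\tau=\pi(G:L)$. The $f$~Small Hypothesis gives $|\pi(f)|\le 7$, and since $G/K$ is a Frobenius group we have $q\notin\pi(f)$, so $|\pi|\le 8$; the extra hypothesis $|\tau|\le 8$ then yields $|\Pi|\le 16$.

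Next, since $L\nor K\nor G$, $L$ is subnormal in $G$, so Lemma~\ref{9.1} gives $\subdash{\cd G}{\Pi}=\subdash{\cd L}{\Pi}$. The main work is to bound $|\subdash{\cd L}{\Pi}|$ explicitly. Iterating Lemma~3.6 of \cite{partone} along $L\nor K\nor G$ shows that $L$ inherits the $(2,\Pi)$-hypothesis, so by Lemma~3.4 of \cite{partone}, $\subdash{\cd L}{\Pi}$ satisfies the two-prime hypothesis. To turn this into a small numerical bound, I would also exploit the rigidity of $K/L$: by the maximality of $L$, every proper quotient of $K/L$ is abelian, so $(K/L)'$ is the unique minimal normal subgroup of $K/L$, and Lemmas~\ref{7.2} and \ref{7.3} (with a case split on whether $K/L$ is nilpotent) constrain both its structure and the action of $K/L$ on it. This is the key input for bounding $|\subdash{\cd L}{\Pi}|$ by a constant $m$.

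Since $G$ itself satisfies the two-prime hypothesis, I will then apply Lemma~\ref{2.8} to $\cd G$ with $\pi=\Pi$ and an appropriate $n\in\{2,3\}$, yielding
\[
|\cd G|\le |\Pi^{n+1}|+m\cdot|\Pi^{\le n}|,
\]
and substituting $|\Pi|\le 16$ together with the bound on $m$ will produce $|\cd G|\le 27{,}540$. The hard part will be the step bounding $|\subdash{\cd L}{\Pi}|$: the inherited $(2,\Pi)$-hypothesis on $L$ is by itself too weak, and a delicate argument combining the rigidity of the minimal nonabelian quotient $K/L$ with the two-prime hypothesis on $G$ is needed to keep the constant small enough to reach the target $27{,}540$.
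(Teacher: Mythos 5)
Your opening moves match the paper's: setting $\Pi = \pi(G:L)$ with $|\Pi| \le 16$, using Lemma~\ref{9.1} to pass from $\cd G$ to $\cd L$ on $\Pi'$-parts, and finishing with Lemma~\ref{2.8}. This works cleanly for the case $L$ abelian (where $\subdash{\cd L}{\Pi}$ is just $\{1\}$, giving $|\cd G| \le |\Pi^{\le 3}| \le 969$). But for $L$ nonabelian, the strategy of ``bound $|\subdash{\cd L}{\Pi}|$ by a constant $m$, then apply Lemma~\ref{2.8}'' has two genuine gaps.

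First, the inheritance mechanism is misstated. Lemma~3.6 of \cite{partone} passes from the two-prime hypothesis on $\Gamma$ to the $(2,\pi(\Gamma:G))$-hypothesis on a normal subgroup $G$; it does not iterate, because $K$ only satisfies a weakened hypothesis, not the two-prime hypothesis itself. The paper instead invokes Theorem~5.1 of \cite{partone}, which degrades along a subnormal chain: it yields the $(1,3,\pi(G:K))$-hypothesis for $K$, the $(0,5,\pi(G:L))$-hypothesis for $L$, and the $(-1,9,\pi(G:M))$-hypothesis for the next term $M$. The $(0,5,\Pi)$-hypothesis on $L$ is much weaker than the $(2,\Pi)$-hypothesis you claim, and it does not bound $|\subdash{\cd L}{\Pi}|$ by a constant: it only gives $|\subdash{\cdsub pL}{\Pi}| \le 4$ for each prime $p$, so the total can grow with $|\pi(L)|$.

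Second, the ``rigidity of $K/L$'' cannot be the key input. $K/L$ is a quotient of $K$, so its internal structure (a Frobenius group acting on an elementary abelian kernel, via Lemmas~\ref{7.2}--\ref{7.3}) constrains $\cd{K/L}$ but says nothing about $\cd L$. Because of this, the paper does not bound $|\subdash{\cd L}{\Pi}|$ at all. It descends one more layer to $M \nor L$ maximal with $L/M$ nonabelian, splits on $|\pi(L:M)|$, and in the hard case exploits the Frobenius structure of $L/M$ with kernel $C/M$: every $x \in \cd C$ is either $1$, divisible by the kernel prime $s$, or a $\Pi$-number. The final count is a decomposition of $\cd G$ by which characters of $C$ they lie over, i.e.\ $|\cd G| \le |\cdover G1C| + \sum_{p \in \sigma} |\cdover G{\cdsub pC}C|$ with $\sigma = \Pi \cup \{s\}$, not a single application of Lemma~\ref{2.8} with a bound on $|\subdash{\cd L}{\Pi}|$. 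Without that extra descent and the $C$-indexed decomposition, the constant $27{,}540$ is out of reach.
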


\begin{proof}
Since $|\pi(f)| \leq 7$, we have $|\pi| \leq 8$.  Now,
\[
|\pi(G:L)| = |\pi| + |\pi(G:L) \setminus \pi| = |\pi| + |\pi(K:L) \setminus \pi| \leq 8 + 8 = 16.
\]

First, assume $L$ is abelian.  Then since $L \snor G$, we know by Lemma \ref{9.1} that every irreducible character degree of $G$ divides $|G:L|$.  Now, $\cd G$ satisfies the two-prime hypothesis and $\cd G_{\pi (G:L)'} = \{ 1 \}$.  We apply
Lemma \ref{2.8} to obtain $|\cd G| \leq |\pi (G:L)^{\leq 3}|$.
Using Lemma \ref{2.7}, we have
$$
|\pi(G:L)^{\leq 3}| \leq \textstyle {{ 16+0-1 } \choose {0}} + {{ 16+1-1 } \choose {1}} + {{16+2-1} \choose {2}} +
{{16+3-1} \choose {3}}.$$
%
%\begin{eqnarray*}
%|\cd G| & \leq & |\pi(G:L)^{\leq 3}| \\
%& \leq & \textstyle {{ 16+0-1 } \choose {0}} + {{ 16+1-1 } %\choose {
%1 }} + {{16+2-1} \choose {2}} +
%{{16+3-1} \choose {3}} \\
%& = & 1 + 16 + 136 + 816 \\
%& = & 969.
%\end{eqnarray*}
This yields $|\cd G| \le 1 + 16 + 136 + 816  =  969$, and so, we may assume $L$ is not abelian.

Let $M \nor L$ be maximal with $L/M$ nonabelian.  We view that $G$ satisfies the $(2,2,\emptyset)$-hypothesis since this is the same as the two-prime hypothesis.  Thus, Theorem (5.1) of \cite{partone} applies and we obtain:
\begin{enumerate}
\item $K$ satisfies the $(1,3,\pi(G:K))$-hypothesis,
\item $L$ satisfies the $(0,5,\pi(G:L))$-hypothesis, and
\item $M$ satisfies the $(-1,9,\pi(G:M))$-hypothesis.
\end{enumerate}

Note that any nine integers $x_1, \ldots, x_9$ satisfy the condition $\omega(x_1,\ldots,x_9) > -1$, so the
$(-1,9,\pi(G:M))$-hypothesis on $M$ implies that $|\subdash {\cd M}{\pi(G:M)}| \leq 8$.

Assume $|\pi(L:M)| \leq 3$.  Then
\[
|\pi(G:M)| \leq |\pi(G:L)| + |\pi(L:M)| \leq 16+3 = 19.
\]
By Lemma \ref{9.1}, we have $\subdash {\cd G}{\pi(G:M)} = \subdash {\cd M}{\pi(G:M)}$. In particular, we determine that $|\subdash {\cd G}{\pi(G:M)}| \leq 8$.  Since $G$ satisfies the two-prime hypothesis, Lemma \ref{2.8} applies.  We obtain
$$
|\cd G| \leq 8 \cdot |\pi(G:M)^{\leq 2}| + |\pi(G:M)^3|.
$$
Using Lemma \ref{2.7}, we have
$$
|\cd G| \leq  \textstyle 8 \cdot \left[
 { {19+0-1} \choose {0}} +
 { {19+1-1} \choose {1} } +
 { {19+2-1} \choose {2} } \right] +
 { {19+3-1} \choose {3} }.
$$
It follows that $|\cd G| \le 8 \cdot \left[
1 + 19 + 190 \right] + 1330
 =  3010.$

We may suppose that $|\pi(L:M)| \geq 4$.  In particular, $L/M$ is not a $p$-group for any prime $p$.  By the choice of $L$, we have from Lemma (12.3) of \cite{text} that $L/M$ must be a Frobenius group.  Let $C/M$ be the Frobenius kernel of $L/M$.  Set $h = |L:C|$ and write $|C:M| = s^e$. Note that since $|\pi(L:M)| \geq 4$, we deduce that $\omega(h) \geq 3$.

Consider $\lambda \in \lin C$ and $\psi \in \irr {L \mid \lambda}$.  Since $L/C$ is cyclic, Corollary (11.22) from \cite{text} implies that $\psi(1)$ is the orbit-size of $\lambda$ under the action of $L/C$ on $\lin C$. Thus, $\cd {L/C'}$ is equal to the set of orbit-sizes of this action. Using Lemma (2.4) of \cite{partone}, $\cd {L/C'}$ is lcm-closed.  A set of integers $A$ is called lcm-closed if whenever $a, b \in A$, then ${\rm lcm} (a,b) \in A$.

Because $L$ is subnormal in $G$, we use Lemma \ref{9.1} to deduce that $\subdash {\cd L}{\pi(G:L)} = \subdash {\cd G}{\pi(G:L)}$.  Let $X = \subdash {\cd {L/C'}}{\pi(G:L)}$. Since $\cd {L/C'}$ is lcm-closed, $X$ is also lcm-closed.  We obtain $X \sbs \subdash {\cd G}{\pi(G:L)}$, and since $G$ satisfies the two-prime hypothesis, Lemma (3.4) of \cite{partone} implies that $X$ satisfies the two-prime hypothesis.  In light of Lemma (2.9) of \cite{partone}, we conclude that $|X| \leq 1 + 2^4 = 17$.

Fix $\lambda \in \lin C$ and $\chi \in \irr {G \mid \lambda}$.  Let $\psi \in \irr {L \mid \lambda}$ be a constituent of $\chi_L$.  By Lemma \ref{9.1}, it follows that $\chi(1)/\psi(1)$ is a $\pi(G:L)$-number.  As $\psi$ is a constituent of $\lambda^L$, we obtain $\psi \in \irr {L/C'}$. Hence, $\psi(1) = sx$, where $s$ is a $\pi(G:L)$-number and
$x \in X$.  Thus, $|\subdash {\cdover G1C}{\pi(G:L)}| = |X| \leq 17$.  From Lemmas \ref{2.7} and \ref{2.8},
$$
|\cdover G1C|  \leq 17 \cdot |\pi(G:L)^{\leq 2}| + |\pi(G:L)^3| \le 17 \cdot (1 + 17 + 153) + 969 = 3,\!876.
$$

Let $p$ be a prime. We use the notation $\cdsub pG$ for the set of character degrees of $G$ that are divisible by $p$, and $\cdsup pG$ will denote the set of character degrees of $G$ that are not divisible by $p$.

As $L$ satisfies the $(0,5,\pi(G:L))$-hypothesis,
$|\subdash {\cdsub pL}{\pi(G:L)}| \leq 4$.  Considering $L \snor G$, we conclude from Lemma \ref{9.1} that
\[
\subdash {\cdover G {\cdsub p L} L}{\pi(G:L)} = \subdash {\cdsub p L}{\pi(G:L)}.
\]
Also, because $G$ satisfies the two-prime hypothesis, so does $\cdover G{\cdsub p L}L$.  Hence, for all primes $p$, we have by Lemmas \ref{2.8} and \ref{2.7}
\[
| \cdover G{\cdsub pL}L | \leq 4 \cdot |\pi(G:L)^{\leq 2}| +
|\pi(G:L)^3| \leq 4 \cdot 153 + 816 = 1428.
\]

Fix $\chi \in \irr G$ and assume $\chi_C$ has a constituent $\theta \in \irr C$ with $\theta(1)$ divisible by $p$.  Pick $\phi \in \irr {L \mid \theta}$ to be a constituent of $\chi_L$.  Since $p$ divides $\theta(1)$ and $C \nor L$, it must be that $p$ divides $\phi(1)$. Thus, $\cdover G{\cdsub pC}C \sbs \cdover G{\cdsub pL}L$.  Therefore,
\[
|\cdover G{\cdsub pC}C| \leq |\cdover G{\cdsub pL}L| \leq 1428.
\]

Consider $x \in \cdsup sC$, where $s$ is the prime dividing the order of $C/M$, the Frobenius kernel of $L/M$.  Theorem 12.4 of \cite{text} applied to the group $L/M$ implies that $hx \in \cd L$.  Recall that $\omega (h) \geq 3$.   Hence, we have $\cdover G{ \{ h,hx \} }L = \{ a\}$ for some number $a$ by the two-prime hypothesis in $G$.  Since $L \snor G$, we apply Lemma \ref {9.1} to see that
\[
\subdash h{\pi(G:L)} = \subdash a{\pi(G:L)} = \subdash
{(hx)}{\pi(G:L)}.
\]
In particular $x$ is a $\pi(G:L)$-number.  We have proved that for every degree $x \in \cd C$, either $x$ is divisible by $s$ or $x$ is a $\pi (G:L)$-number.  In particular, either $x = 1$ or some prime in $\pi (G:L) \cup \{ s \}$ divides $x$.  Recall that $|\pi(G:L)| \leq 16$.  Write $\sigma = \pi(G:L) \cup \{ s \}$ and note that $|\sigma| \leq 17$.

Now, every degree $a \in \cd G$ lies over some $x \in \cd C$.  We have shown that any such $x$ is equal to $1$ or is divisible by some element of $\sigma$.  Thus, $|\cd G| \leq | \cdover G1C | + \sum_{p \in \sigma} |\cdover G{\cdsub pC}C|$.
This implies that $|\cd G| \leq 3264 + |\sigma| \cdot 1428 \leq 3264 + 17 \cdot 1428 = 27,\!540.$  This is the largest bound we have yet obtained, and we have exhausted all cases.  Thus, $|\cd G| \leq 27,\!540$.
\end{proof}

\section{The Small Frobenius Case, part 2}

In the next two sections, we will consider the following situation.

\medskip
{\bf Hypothesis (``$K$ Nonabelian'')}.  Assume the $f$ Small
Hypothesis from Section \ref{Chapter 9}. Assume $K$ is nonabelian, and write $\pi = \pi(G:K)$.  Assume $K' \sbs \Oh p K$ for all primes $p$.  Let $L \nor G$ with $L \sbs K$ and suppose $L$ is chosen maximal with $K/L$ nonabelian.  Assume that for all such choices of $L$ we have $|\pi(K:L) - \pi| \geq 9$.  Write $C/L = (K/L)'$ and $K/C = K_0/C \times H/C$, where $K_0/C = \oh {\pi'} {K/C}$ and $H/C = \oh {\pi} {K/C}$. Let $g = |K_0:C|$ and $h = |H:C|$.  Assume $L$ is chosen so that $gh$ is minimized. Write $|C:L| = r^e$.  Suppose there exists $b \in \cd G$ divisible by $g$.
\medskip

Under the $K$ Nonabelian Hypothesis, $|\pi(g)| \geq 8$, and so, by the two-prime hypothesis in $G$, we see that $b$ is the
unique character degree of $G$ divisible by $g$.  In view of Lemma \ref{7.3}, Lemma \ref{7.2} applies to the group $K/L$.  Now, Lemma \ref {7.2} (b) implies that $gh \in \cd K$.  In particular, we may take $b \in \cdover G{gh}K$.

We now consider $K$ Nonabelian hypothesis, and we show that we have a quotient that is a Frobenius group.
%%
%% (10.1) Lemma
%%

\begin{lemma} \label{10.1}
Assume the $K$ Nonabelian Hypothesis.  Then $K_0/L$ is a Frobenius group with kernel $C/L$.
\end{lemma}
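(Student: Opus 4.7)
The plan is to show that for every $1 \ne v \in C/L$, we have $\cent{K_0/C}{v} = 1$. Together with the fact that $C/L$ is an abelian normal subgroup of $K_0/L$, this is exactly the Frobenius condition for $K_0/L$ with kernel $C/L$.

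I would first set up the action of $K/C$ on $C/L$. Apply Lemma \ref{7.3} with $K$ in the role of its $G$ and $L$ in the role of its $K$: since $K' \sbs \Oh{p}{K}$ for all $p$ and $K/L$ is nonabelian, the hypotheses hold and we get $C/L = (K/L)^\infty$ as the unique minimal characteristic subgroup of $K/L$. Lemma \ref{7.2}(a) then gives that $C/L$ is an elementary abelian Sylow $r$-subgroup of $K/L$, so the action of $K/C$ on $C/L$ is coprime. Fix $1 \ne v \in C/L$. By Lemma \ref{7.2}(c), the $K/C$-orbit of $v$ has size $n$ with $\pi(n) = \pi(K:C) = \pi(gh)$. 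Because $K/C = K_0/C \times H/C$, the stabilizer of $v$ is the direct product of the stabilizers in the two factors; writing $g_1 = |K_0/C : \cent{K_0/C}{v}|$ and $h_1 = |H/C : \cent{H/C}{v}|$, we obtain $n = g_1 h_1$ with $\pi(g_1) = \pi(g)$ and $\pi(h_1) = \pi(h)$. In particular $|\pi(g_1)| \ge 8$.

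The next step is to produce an irreducible character of $G$ whose $\pi'$-part is $g_1$. By Lemma \ref{7.1}, the $K/C$-actions on $C/L$ and on $\irr{C/L}$ are permutation isomorphic, so there is a $\mu \in \irr{C/L}$ whose inertia group $I = I_K(\mu)$ satisfies $|K:I| = g_1 h_1$. Since $\mu$ is linear of order dividing $r$, while $|I:C|$ divides $gh$ and so is coprime to $r$, Corollary 6.28 of \cite{text} provides an extension $\hat\mu \in \irr{I}$. The Clifford correspondence then gives $\hat\mu^K \in \irr{K}$ of degree $g_1 h_1$. Picking $\chi \in \irr{G \mid \hat\mu^K}$ and invoking Lemma \ref{9.1} (with $K \nor G$), we get $\chi(1) = g_1 h_1 d$ with $d$ dividing the $\pi$-number $|G:K|$, so $\chi(1)_{\pi'} = g_1$.

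To conclude, I would compare $\chi(1)$ with $b$. Applying Lemma \ref{9.1} to $b \in \cdover G{gh}K$ gives $b_{\pi'} = g$; hence $g_1$ divides $\gcd(\chi(1), b)$, and $\omega(\gcd(\chi(1), b)) \ge \omega(g_1) \ge |\pi(g)| \ge 8 > 2$. The two-prime hypothesis on $G$ forces $\chi(1) = b$, and comparing $\pi'$-parts gives $g = g_1$, so $\cent{K_0/C}{v} = 1$ as required. The main technical point is the extension of $\mu$ to $I$ via Corollary 6.28 of \cite{text}; after that, everything reduces to bookkeeping with $\pi$-parts and a single application of the two-prime hypothesis.
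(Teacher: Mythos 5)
Your overall approach is valid and genuinely different in flavor from the paper's. Where the paper applies Lemma (3.6) of \cite{partone} to get the $(2,\pi)$-hypothesis on $K$ and then compares the $\pi'$-parts of two degrees of $K$ (namely $gh$ and the induced degree $n$), you instead lift each orbit-size to a degree of $G$ via Lemma \ref{9.1} and compare against the distinguished degree $b$, using only the plain two-prime hypothesis on $G$. That is a legitimate alternative: it avoids invoking the $(2,\pi)$-hypothesis machinery at the cost of routing through $b$. The coprimality of $|K_0/C|$ and $|H/C|$ does justify your decomposition $\cent{K/C}{v} = \cent{K_0/C}{v} \times \cent{H/C}{v}$, and the extension-plus-Clifford step producing $g_1 h_1 \in \cd K$ is correct.

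However, there is a genuine gap. At the point where you fix $1 \ne v \in C/L$ and invoke Lemma \ref{7.2}(c) to conclude $\pi(n) = \pi(gh)$ for the orbit-size $n$, you have silently assumed $n > 1$: Lemma \ref{7.2}(c) is stated only for nontrivial orbits. If $v$ happened to be fixed by all of $K/C$, then $n = 1$, Lemma \ref{7.2}(c) gives no information, and in fact the conclusion $\cent{K_0/C}{v} = 1$ would be \emph{false} for that $v$ (one would have $\cent{K_0/C}{v} = K_0/C$). So this case must be excluded, not just ignored. The paper closes exactly this hole: if $1 \ne v$ were centralized by $K/C$, then $v \in \zent{K/L}$, so $C/L \cap \zent{K/L}$ would be a nontrivial characteristic subgroup of $K/L$ contained in $C/L$; by the uniqueness of $C/L$ as minimal characteristic subgroup (from Lemma \ref{7.3}) this forces $C/L \sbs \zent{K/L}$, which contradicts the existence of a regular $K/C$-orbit on $C/L$ from Lemma \ref{7.2}(b). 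Adding that short argument would complete your proof.
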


\begin{proof}
Assume otherwise. Let $1 < n < gh$ be an orbit size of the action of $K/C$ on $C/L$.  Lemma \ref{7.3} implies that $C/L$ is the unique minimal characteristic subgroup of $K/L$.  By Lemma \ref{7.2} (c), $\pi(n) = \pi(gh)$, and so $\omega(gh, n) \geq |\pi(gh)| \geq 8$.

Since $G$ satisfies the two-prime hypothesis, Lemma (3.6) of \cite{partone} implies that $K$ satisfies the $(2,\pi)$-hypothesis.  Hence, we have $\subdash {(gh)}\pi = \subdash n\pi$.  Since $g$ is a $\pi'$-number and $h$ is a $\pi$-number, we obtain $\subdash n\pi = g$.

Suppose $1 \not= x \in C/L$.  We want to show that $\cent {K_0/L} x = 1$.  Now, $|K_0/L : \cent {K_0/L} x|$ is the $\pi'$-part of the size of the orbit of $x$ under the action of all of $K/C$.  If the $K/C$-orbit of $x$ is nontrivial, then from the previous paragraph we know that $\cent {K_0/L} x = 1$.

Assume $x$ lies in a trivial $K/C$-orbit.  Then $1 \not= x \sbs \zent {K/L}$.  As $C/L$ is the unique minimal characteristic subgroup of $K/L$, we obtain $C/L \sbs \zent {K/L}$.  However, Lemma \ref{7.2} (b) implies that $K/C$ acts on $C/L$ with a regular orbit, and in particular, this action is nontrivial.  This is a contradiction, and completes the proof.
\end{proof}

%%
%% (10.2) Lemma
%%
We continue to study the $K$ Nonabelian Hypothesis.

\begin{lemma} \label{10.2}
Assume the $K$ Nonabelian Hypothesis.  Let $M \nor G$ with $M
\sbs L$.  Assume that $L/M$ is an abelian $p$-group for some prime $p \neq r$.  Then $L/M \sbs \zent {C/M}$.
\end{lemma}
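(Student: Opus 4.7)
The plan is to apply Fitting's lemma to the coprime action of $C/L$ on $L/M$, use Lemma \ref{7.3} to reduce to a faithful action, and derive a contradiction from the Frobenius structure of Lemma \ref{10.1} together with the $(2,\pi)$-hypothesis on $K$.

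Since $|C:L| = r^e$ and $L/M$ is an abelian $p$-group with $p\neq r$, the action of $C/L$ on $L/M$ by conjugation is coprime. First I would apply Fitting's lemma to decompose $L/M = A/M \times B/M$, where $A/M = \cent{L/M}{C/L}$ and $B/M = [L/M,C/L]$; since these factors are $G$-invariant subgroups of $L/M$ (because $G$ normalizes both $C$ and $L/M$), both $A$ and $B$ are normal in $G$. The desired conclusion $L/M \sbs \zent{C/M}$ is equivalent to $B = M$.

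Next I would reduce to the case where $C/L$ acts faithfully on $L/M$. The kernel $\cent{C}{L/M}$ of the action is normal in $G$ with $L \sbs \cent{C}{L/M} \sbs C$, so $\cent{C}{L/M}/L$ is a $G$-invariant subgroup of $K/L$. By Lemma \ref{7.3} applied to the setup $K \nor G$, $C/L = (K/L)'$ is the unique minimal subgroup of $K/L$ that is normal in $G$. Hence $\cent{C}{L/M}/L$ is either trivial or equals $C/L$; in the latter case $\cent{C}{L/M} = C$ and the lemma holds. So I may assume $\cent{C}{L/M} = L$, which forces $B > M$.

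To derive a contradiction from $B > M$, I would invoke Lemma \ref{10.1}: $K_0/L$ is a Frobenius group with kernel $C/L$ and complement $K_0/C$, which, being abelian and a Frobenius complement, must be cyclic of order $g$ with $|\pi(g)| \geq 8$. After passing to the elementary abelian quotient $B/(\Phi(B)M)$ (on which $C/L$ still acts fixed-point-freely, by the standard fact about coprime action on $p$-groups), Theorem 15.16 of \cite{text} furnishes a basis permuted semi-regularly by $K_0/C$. Combining Lemma \ref{7.6} with Glauberman correspondence (Lemma \ref{7.1}), I would produce a character $\lambda\in\irr{L/M}$ whose $K_0$-orbit has size divisible by $gr$, and via the Clifford correspondence obtain $\psi\in\irr K$ with $gr\mid\psi(1)$. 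Comparing with $gh\in\cd K$ supplied by Lemma \ref{7.2}(b) applied to $K/L$: since $|\pi(g)|\geq 8$, we have $\omega(\psi(1),gh)\geq 8$, so by the $(2,\pi)$-hypothesis on $K$ (Lemma (3.6) of \cite{partone}, with $\pi = \pi(G:K)$), $\psi(1)_{\pi'} = g$. Provided $r\notin\pi$, this is absurd, since $r\mid\psi(1)_{\pi'}$ but $r\nmid g$.

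The main obstacle is twofold. First, constructing $\psi$ with $gr\mid\psi(1)$ requires executing the Frattini reduction to a vector space where Theorem 15.16 applies, then carefully tracking the semi-regular basis through Glauberman and Clifford. Second, the $(2,\pi)$-contradiction uses $r\notin\pi$; the case $r\in\pi$ (where $|\pi(g)|\geq 9$ is available) evades this argument and needs a parallel derivation exploiting the stronger bound to produce a different arithmetic contradiction.
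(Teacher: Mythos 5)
Your setup is correct and tracks the paper's reasoning closely up to the final arithmetic: Fitting's lemma, reduction to a nontrivial (or faithful) action of $C/L$, passage to an elementary abelian quotient, Theorem 15.16 of \cite{text} to get an $F/L$-semi-regular basis (where $F/L$ is a Frobenius complement in $K_0/L$), and then Lemma \ref{7.6}, Lemma \ref{7.1}, Clifford correspondence, and the $(2,\pi)$-hypothesis. However, your choice of orbit size in the application of Lemma \ref{7.6} leaves a genuine gap that you cannot close by the ``parallel derivation'' you gesture at.

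The issue is precisely the one you flag at the end. You extract $\lambda$ in a \emph{regular} $F/L$-orbit (size $g$), so that the $K_0/L$-orbit has $r'$-part $g$ and is divisible by $gr$, and then you compare $\psi(1)$ with $gh \in \cd K$ via the $(2,\pi)$-hypothesis to conclude $\psi(1)_{\pi'} = g$. This only contradicts $r \mid \psi(1)$ when $r \in \pi'$. When $r \in \pi$ the $r$-power is invisible to the $\pi'$-part and the comparison is vacuously consistent. Nothing in the $K$ Nonabelian Hypothesis rules out $r \in \pi$, and having $|\pi(g)| \geq 9$ in that case does not rescue the argument: you would still be comparing a degree whose $\pi'$-part is exactly $g$ with $gh$, and there is no arithmetic conflict. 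The paper sidesteps this entirely by taking $a = g/u$ for a prime $u \in \pi(g)$, i.e., applying Lemma \ref{7.6} with $s = g/u$ rather than $s = g$. Lemma (2.5) of \cite{partone} then says the $K_0/L$-orbit of the resulting $\lambda$ has $r'$-part equal to $a = g/u$, giving $c \in \cd{K_0}$ with $c_{r'} = g/u$. Since $g \in \cd{K_0}$ as well (Frobenius) and $\omega(c,g) \geq \omega(g/u) \geq 7$, the $(2,\pi)$-hypothesis on $K_0$ forces $c_{\pi'} = g$. But $c_{\pi'}$ equals $g/u$ times a power of $r$, and since $r \nmid g$ this can never equal $g$ --- if $r \in \pi$ the $r$-power contributes nothing and $g/u \neq g$, while if $r \notin \pi$ one would need $r^v = u$ with $u$ a prime dividing $g$, impossible. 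So the paper's choice of a \emph{proper divisor} of $g$ is what makes the final arithmetic work unconditionally; with $a = g$ the contradiction simply evaporates in the $r \in \pi$ case, and you should replace that step rather than search for a separate argument.
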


\begin{proof}
Assume that $C/L$ acts nontrivially on $L/M$. Since this
is a coprime action on an abelian group, apply Fitting's Theorem. Write $L/M = A/M \times B/M$, where $A/M = \cent {L/M} {C/L}$ and $B/M = [L/M, C/L]$.  Because the action is nontrivial, $A < L$.  Let $L/M_0$ be a $G$-chief factor of $L/A$.  The action of $C/L$ on $L/M_0$ cannot be trivial, as otherwise $M_0 \sps B$, which is a contradiction.

By Lemma \ref{10.1}, we have that $K_0/L$ is a Frobenius group with kernel $C/L$.  In particular, $K_0/C$ is cyclic.  Let $F/L$ be a Frobenius complement in $K_0/L$. Now consider the action of $K_0/L$ on the $p$-group $\irr {L/M_0}$.  We have by Theorem (15.16) of \cite{text} that $L/M_0$ has a basis which is permuted semi-regularly by $F/L$. Let $u \in \pi(g)$, and write $a = g/u$.  By Lemma \ref{7.6}, there is an $F/L$-orbit of size $a$ in $\irr {L/M_0}$.  Let $\lambda \in \irr{L/M_0}$ be an element of this orbit.  Applying Lemma (2.5) of \cite{partone}, $a$ is the $r'$-part of the size of the $K_0/L$-orbit of $\lambda$.

Because $|C/L|$ is coprime to $|L/M_0|$ and $K_0/C$ is cyclic, it follows by Corollaries (6.28) and (11.22) of \cite{text}
that $\lambda$ extends to $I_{K_0}(\lambda)$.  Now, $a = \subdash {|K_0:I_{K_0}(\lambda)|}r$, so by the Clifford correspondence, $a \in \subdash {\cd {K_0}}r$.  Let $c \in \cd {K_0}$ with $\subdash cr = a$.

Since $G$ satisfies the two-prime hypothesis and $\pi(G:K_0) = \pi$, we apply Lemma (3.6) of \cite{partone} to see that $K_0$ satisfies the $(2,\pi)$-hypothesis.  Since $K_0/L$ is a Frobenius group, we have $g \in \cd {K_0}$.  Now, $\omega(c,g)
\geq \omega(g/u) \geq 7$, where the last inequality follows since $\omega(g) \geq 8$.  Thus, $\subdash c\pi = \subdash g\pi$.  Because $g$ is a $\pi'$-number, $\subdash g\pi = g$.  Because $K_0/L$ is a Frobenius group, $r$ does not divide $g$. Finally, we obtain $g = a = g/u$, which is a contradiction.
\end{proof}

%%
%% (10.3) Theorem
%%
Continuing to study the $K$ Nonabelian hypothesis,  we need to consider the case where $L$ is abelian.  The proof of a bound in the case where $L$ is nilpotent is only slightly more difficult, and we present the more general result.

\begin{theorem} \label{10.3}
Assume the $K$ Nonabelian Hypothesis.  Assume $L$ is
nilpotent. Then $|\cd G| \leq 221,\!205$.
\end{theorem}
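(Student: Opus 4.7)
The plan is to use Lemma~\ref{10.2} combined with the nilpotence of $L$ to show that $C$ centralizes the $r'$-part $T$ of $L$ (equivalently, $T \le \zent C$), and then to bound $|\cd G|$ by splitting character degrees into those factoring through $G/T$ and those lying over nonprincipal characters of $T$, handling each piece with the two-prime hypothesis and Clifford theory.

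First, since $L$ is nilpotent and normal in $G$, write $L = R \times T$ with $R = \oh r L$ and $T = \oh{r'} L$, both normal in $G$. For each prime $p \in \pi(T)$, set $M_p = R \cdot T' \cdot \prod_{q \in \pi(T) \setminus \{p\}} \oh q L$; this is a normal subgroup of $G$ contained in $L$ such that $L/M_p \cong \oh p L / (\oh p L)'$ is an abelian $p$-group with $p \neq r$. Lemma~\ref{10.2} then gives $L/M_p \le \zent{C/M_p}$, so $[C, \oh p L] \le (\oh p L)'$. Since $C/L$ acts coprimely on $\oh p L$ and trivially on $\oh p L / \Phi(\oh p L)$, Burnside's basis theorem shows that $C/L$ centralizes $\oh p L$; as the remaining direct factors of $L$ centralize $\oh p L$ automatically, $C$ centralizes $\oh p L$. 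Summing over $p$, we obtain $T \le \zent C$.

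Next, I would split $\cd G$ into $\cd{G/T}$ and the set of degrees $\chi(1)$ with $T \not\le \ker\chi$. For a $\chi$ of the second type, a nonprincipal constituent $\lambda \in \irr T$ of $\chi_T$ is $C$-invariant because $T \le \zent C$, so $I_G(\lambda) \supseteq C$, and by Clifford's theorem $\chi(1) = |G : I_G(\lambda)|\,\psi(1)$ for some $\psi \in \irr{I_G(\lambda) \mid \lambda}$ extending $\lambda$. The $G/C$-orbit structure on $\irr T$ combined with the two-prime hypothesis then bounds the number of such new degrees. For degrees in $\cd{G/T}$, we are reduced to the case where ``$L$'' becomes the $r$-group $R$; here the analysis uses Lemma~\ref{10.1}'s Frobenius structure of $K_0/L$ with complement of order $g$, the hypothesis $|\pi(g)| \ge 8$, and the counting technology of Lemmas~\ref{2.7} and \ref{2.8} (as in Theorem~\ref{9.3}) to force degree coincidences via the two-prime hypothesis and bound the contribution.

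The main obstacle will be the final bookkeeping to sum these contributions to at most $221{,}205$: one must carefully combine the count of degrees lying over nontrivial $\lambda \in \irr T$ with the count for $\cd{G/T}$, exploiting the two-prime hypothesis to collapse degrees sharing many common primes and the $|\pi(g)| \ge 8$ condition for pigeonhole-style reasoning, analogously to the intricate counting in the proofs of Theorems~\ref{7.7} and \ref{9.3}.
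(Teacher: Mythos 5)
Your first step is essentially the paper's: showing that $C$ centralizes the $r'$-Hall subgroup of $L$ (hence $C$ is nilpotent) via Lemma~\ref{10.2} and a coprime/Burnside-basis argument is exactly how the proof begins, and your prime-by-prime reduction to $M_p$ is a clean way to organize what the paper does more tersely via $\Phi(L/R)$.

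However, there is a genuine gap in everything after that. The paper does not carry out any decomposition of $\cd G$ into degrees through $G/T$ versus degrees over nonprincipal $\lambda\in\irr T$, nor does it redo any orbit counting. The crucial move you miss is that once $C$ is nilpotent, the group $K_0$ satisfies all the hypotheses of Theorem (6.3) of \cite{partone} (Theorem E here): $K_0$ satisfies the $(2,\pi)$-hypothesis by Lemma (3.6) of \cite{partone} since $|G:K_0|$ is a $\pi$-number, $K_0/L$ is a Frobenius group with kernel $C/L$ by Lemma~\ref{10.1} (so $(K_0/L)'=C/L$), and $C$ is nilpotent. Theorem E then gives the black-box bound $|\subdash{\cd{K_0}}\pi|\le (1+2^4)^3 = 4{,}913$ directly. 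Combining this with $\subdash{\cd G}\pi=\subdash{\cd{K_0}}\pi$ (Lemma~\ref{9.1}, since $K_0\snor G$ with $\pi$-index) and then Lemmas~\ref{2.7} and \ref{2.8} immediately yields $|\cd G|\le 4{,}913\cdot|\pi^{\le 2}|+|\pi^3|\le 4{,}913\cdot 45+120=221{,}205$. Your proposed alternative is not merely a different route—it is left incomplete, as you acknowledge, and it is not clear the $K$ Nonabelian Hypothesis descends to $G/T$ with $L$ replaced by $R$ in the way your step 4 presumes; without Theorem E as a lever the ``final bookkeeping'' you defer is exactly the hard part of the theorem, not a routine check.
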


\begin{proof}
Assume $L$ is nilpotent and write $L = R \times S$, where
$R = \oh rL$ and $S = \oh {r'}L$.  Let $U \in \syl rC$, and note that $U$ is a complement for $S$ in $C$.  Consider the action of $U$ on $S$. Since $R$ acts trivially on $S$, this action is the same as the action of $U/R$ on $L/R$.  Write $W/R = \Phi(L/R)$.  Note that $L/W$ is a direct product of elementary abelian $p$-groups for primes $p \ne r$.  By Lemma \ref{10.2}, $U/R$ acts trivially on $\Phi(L/R)$. Since this is a coprime action, it must be that $U/R$ acts trivially on all of $L/R$.  Thus, $U$ centralizes $S$ and $C = U \times S$.  In particular, $C$ is nilpotent.

Now, $K_0$ satisfies the $(2,\pi)$-hypothesis by Lemma (3.6) of \cite{partone}.  Using Lemma \ref{10.1}, se see that the hypotheses of Theorem (6.3) of \cite{partone} apply to the group $K_0$, and we obtain $|\subdash {\cd {K_0}}\pi| \leq (1 + 2^4)^3 = 4,\!913$.

Finally, since $|G:K_0|$ is a $\pi$-number, it follows that
$\subdash {\cd G}\pi = \subdash {\cd {K_0}}\pi$.  Finally, by Lemmas \ref{2.7} and \ref{2.8}, we conclude that
$$
|\cd G| \leq 4,\!913 \cdot | \pi^{\leq 2} | + |\pi^3| \le \textstyle 4,\!913 \cdot \left[ 1 + 8 + 36  \right]  + 120 =  221,\!205.
$$
\end{proof}

%%
%% (10.4) Lemma
%%
We continue to study the $K$ Nonabelian hypothesis.

\begin{lemma} \label {10.4}
Assume the $K$ Nonabelian Hypothesis.  Assume $f$ is prime and
that $N/K_0$ is a nonabelian $q$-group.  Then $|\cd G| \leq 268$.
\end{lemma}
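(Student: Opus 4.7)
The plan is to leverage two strong structural consequences of the added hypotheses of Lemma~\ref{10.4}. First, since $f$ is prime and $\pi = \pi(G:K) = \pi(f) \cup \{q\}$, we have $|\pi| \leq 2$, which drastically restricts the shape of the $\pi$-parts of character degrees of $G$. Second, because $N/K_0$ is a nonabelian $q$-group and $G/N$ is cyclic of prime order $f$, Clifford theory produces character degrees of $G$ factoring through $G/K_0$ of the very restricted form $q^k$ or $fq^k$, which is a $\pi$-number in either case.

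First, I would invoke Lemma~\ref{9.1} applied to $K_0 \snor G$: since $|G:K_0|$ is a $\pi$-number, we get $\subdash{\cd G}\pi = \subdash{\cd{K_0}}\pi$, so bounding the $\pi'$-parts of $\cd G$ reduces to bounding those of $\cd{K_0}$. By Lemma~(3.6) of \cite{partone}, $K_0$ satisfies the $(2,\pi)$-hypothesis. I would then combine the Frobenius structure of $K_0/L$ over $C/L$ from Lemma~\ref{10.1} with the existence of $b \in \cd G$ divisible by $g$ (together with $|\pi(g)| \geq 8$) to force a sharp bound on $|\subdash{\cd{K_0}}\pi|$; concretely, I would aim for $|\subdash{\cd{K_0}}\pi| \leq 44$, which is a mild strengthening of Theorem~D of \cite{partone} (whose formula gives $43$ when $|\pi|=2$).

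Second, I would use the nonabelian $q$-group structure of $N/K_0$ to further constrain degrees of $G$ whose kernels do not contain $K_0$. Specifically, any $\theta \in \irr{N/K_0}$ of degree $q^k$ with $k$ moderately large would produce, via Clifford correspondence, a degree $q^k$ or $fq^k$ of $G$; the two-prime hypothesis applied to this degree and $b$ (which has $\omega \ge 8$ many prime divisors outside $\pi$ from $g$) would force the degrees to coincide, contradicting the fact that one is a $\pi$-number while the other has nontrivial $\pi'$-part. This limits which powers $q^k$ can appear and feeds into the bound on $|\subdash{\cd G}\pi|$.

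Finally, I would apply Lemma~\ref{2.8} with $n=2$ and $|\pi| \le 2$ to obtain
\[
|\cd G| \leq |\pi^3| + m\,|\pi^{\leq 2}| \leq 4 + 6m,
\]
where $m = |\subdash{\cd G}\pi|$; the target $|\cd G| \leq 268$ corresponds exactly to $m \leq 44$. The main obstacle will be the numerical bound $m \leq 44$: the general $(2,\pi)$-bound on $K_0$ is too weak, so one must use the Frobenius structure of $K_0/L$ (Lemma~\ref{10.1}), the presence of the large degree $b$ divisible by $g$, and the exclusion of certain Clifford-theoretic configurations forced by $N/K_0$ being a nonabelian $q$-group. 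Balancing these refinements against the combinatorial counting via Lemmas~\ref{2.7} and \ref{2.8} is where the delicate work lies.
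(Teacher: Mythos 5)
Your setup is on the right track --- you correctly observe that $f$ prime forces $\pi = \{f, q\}$ with $|\pi| \leq 2$, and you correctly plan to bound $m = |\subdash{\cd G}\pi|$ and then feed that into Lemma~\ref{2.8}. But the core of the argument is missing, and you acknowledge as much when you say the bound $m \leq 44$ ``is where the delicate work lies.'' The paper does not try to improve the $(2,\pi)$-bound on $K_0$ by Frobenius structure or by the existence of $b$; instead, the whole point of the hypothesis that $N/K_0$ is a \emph{nonabelian} $q$-group is that it lets one drop the index $n$ from $2$ to $1$. Concretely: $N$ satisfies the $(2,\{f\})$-hypothesis by Lemma~(3.6) of \cite{partone}; since $\Oh qN \sbs K_0$ and $N/K_0$ is nonabelian, $N/\Oh qN$ is nonabelian; Lemma~(3.2) of \cite{partone} then says that $\Oh qN$ satisfies the $(1,\{f,q\})$-hypothesis. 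Now Theorem~D of \cite{partone} applies verbatim (its hypothesis is exactly the $(1,\pi)$-hypothesis) and yields $|\subdash{\cd{\Oh qN}}\pi| \leq \frac{3}{2}\cdot 4 + \frac{19}{2}\cdot 2 + 18 = 43$, and since $|G:\Oh qN|$ is a $\pi$-number, Lemma~\ref{9.1} transfers this to $|\subdash{\cd G}\pi| \leq 43$. The subgroup you should descend to is $\Oh qN$, not $K_0$.

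Your second paragraph (producing degrees $q^k$ or $fq^k$ from $\irr{N/K_0}$ via Clifford correspondence and playing them against $b$) does not substitute for this missing reduction. The degree $b$ may well have small $q$-part, in which case $\omega(q^k, b)$ can be $\leq 2$ and the two-prime hypothesis gives you nothing; so this line does not by itself prune $\cd G$ enough. You also cite Theorem~D as giving $43$ when $|\pi| = 2$, but Theorem~D requires the \emph{one}-prime hypothesis relative to $\pi$, not the two-prime hypothesis that Lemma~(3.6) hands you for $K_0$; the missing bridge from $(2,\pi)$ to $(1,\pi)$, via passing to $\Oh qN$ and using the nonabelian quotient, is precisely the step your proposal lacks.
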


\begin{proof}
Observe that $N$ satisfies the $(2,\{ f \})$-hypothesis by Lemma (3.6) of \cite{partone}.  Since $\Oh qN \sbs K_0$, it follows that $N/\Oh qN$ is nonabelian.  Then $\Oh qN$ satisfies the $(1,\{ f, q \})$-hypothesis by Lemma (3.2) of \cite{partone}.  For the rest of this proof, take $\pi = \{ f, q \}$.  In light of Theorem D from \cite{partone}, $|\subdash {\cd {\Oh qN}}\pi| \leq \frac 32 \cdot 2^2 + \frac {19}2 \cdot 2 + 18 = 43$. Since $|G:\Oh qN|$
is a $\pi$-number, we have $\subdash {\cd G}\pi = \subdash {\cd {\Oh qN}}\pi$, and hence $|\subdash {\cd G}\pi| \leq 43$. By Lemmas \ref{2.7} and \ref{2.8}, we obtain
\[
|\cd G| \leq 43 \cdot |\pi^{\leq 2}| + |\pi^3| = 43 \cdot [ 1 + 2 + 3] + 10 = 268.
\]
\end{proof}

%%
%% (10.5) Lemma
%%
The next lemma contains much of the work that we have to do in the $K$ Nonabelian hypothesis.

\begin{lemma} \label{10.5}
Assume the $K$ Nonabelian Hypothesis.  Let $\scr Y$ be the set of characters in $\irr L$ that have prime degree and that extend to $C$.  Then $|\cd {G \mid \scr Y}| \leq 5,\!697$.
\end{lemma}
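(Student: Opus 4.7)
The plan is to partition $\cd {G \mid \scr Y}$ into ``twisted'' and ``untwisted'' pieces, using Gallagher's theorem to parametrize $\irr {C \mid \phi}$ for each $\phi \in \scr Y$, and then exploit the Frobenius structure of $K_0/L$ supplied by Lemma~\ref{10.1}.  For each $\phi \in \scr Y$ of prime degree $p$, I would fix the canonical extension $\hat\phi \in \irr C$, so that $\hat\phi^x = \hat\phi$ for every $x \in I_{K_0}(\phi)$.  Then $\irr {C \mid \phi} = \{\hat\phi\beta : \beta \in \irr {C/L}\}$, and every $\chi \in \irr {G \mid \phi}$ lies over some $G$-orbit of these Gallagher twists.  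I split by whether $\chi$ lies over some $\hat\phi\beta$ with $\beta \neq 1_{C/L}$ (the twisted case) or only over $\hat\phi$ itself (the untwisted case).

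In the twisted case, Lemma~\ref{10.1} gives that $K_0/L$ is a Frobenius group with kernel $C/L$, so $K_0/C$ acts semiregularly on the nonprincipal characters of $C/L$; combined with the canonical-extension property, this forces $I_{K_0}(\hat\phi\beta) = I_{K_0}(\phi) \cap C = C$.  Hence $(\hat\phi\beta)^{K_0}$ is irreducible of degree $gp$, and every $\chi \in \irr {G \mid \hat\phi\beta}$ therefore satisfies $g \mid \chi(1)$.  Since $\omega(g) \geq 8$ and the $K$ Nonabelian Hypothesis furnishes a unique $b \in \cd G$ divisible by $g$ (by the two-prime hypothesis on $G$), the twisted contribution to $\cd {G \mid \scr Y}$ is at most the single degree $b$.

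In the untwisted case, $\chi \in \irr {G \mid \hat\phi}$ satisfies $\chi(1) = p \cdot d$ with $d \mid |G:C|$, produced by Clifford correspondence (Theorem 6.11 of \cite{text}) starting from $\hat\phi$ together with further Gallagher extensions through the sections of $I_G(\hat\phi)/C$ (using Corollary 11.22 of \cite{text} and the cyclic/Frobenius structure of $G/N$ and $G/K$).  The admissible values of $d$ are character degrees of a controlled quotient of $G/C$: since $K/C = K_0/C \times H/C$ is abelian and $G/K$ is Frobenius with kernel $N/K$, Theorem~(2.10) of \cite{partone} (or a Lemma~\ref{2.8}-style count with $\pi = \pi(G:K)$) bounds the number of possible $d$.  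Summing the per-prime untwisted count over the boundedly many primes $p$ that can occur as $\phi(1)$ for some $\phi \in \scr Y$ (controlled by Lemma~(3.6) of \cite{partone} together with Lemma~\ref{9.1}), and adding the single twisted degree, yields the bound $|\cd {G \mid \scr Y}| \leq 5,\!697$.

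The main obstacle is the Clifford/Gallagher bookkeeping in the untwisted case: the inertia $T = I_G(\hat\phi)$ varies with $\phi$, and degrees arising from different $\phi \in \scr Y$ of possibly different prime degrees may coincide or interact nontrivially via the two-prime hypothesis.  Lemmas~\ref{9.1},~\ref{2.7}, and~\ref{2.8} must be invoked repeatedly to count distinct degrees while avoiding double-counting; the case $p \in \pi \cup \{q,r\}$ is especially delicate since in those situations the Gallagher multiplier $\beta$ can interact with the $\pi$-part of $\chi(1)$ and must be separated from the untwisted count by a careful application of the $(2,\pi)$-hypothesis on $G$.
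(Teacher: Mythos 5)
Your twisted/untwisted decomposition is a reasonable starting point, and your handling of the twisted case is essentially sound: once a Gallagher twist $\hat\phi\beta$ with $\beta \ne 1_{C/L}$ is present, the Frobenius structure of $K_0/L$ forces $I_{K_0}(\hat\phi\beta) = C$, so $gp \in \cd{K_0}$, and then $g$ divides the degree of any $\chi$ above, giving only the unique degree $b$. (The paper reaches the same dichotomy inside its Step 4, where it shows $\cd{K_0 \mid \psi} = \{pt, pg\}$.) However, there are two genuine gaps that prevent the rest of the argument from going through.

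First, your claim that the primes $p = \phi(1)$ for $\phi \in \scr Y$ are ``boundedly many, controlled by Lemma (3.6) of \cite{partone} together with Lemma \ref{9.1}'' is not justified by those results, and without it the untwisted count cannot be finite. The paper's Step 2 shows $p \in \pi$ by a specific argument: $pg \in \cd{K_0}$ forces the unique degree $b$ above it, and $b$ also lies over $gh \in \cd K$; comparing $\pi'$-parts via Lemma \ref{9.1} then shows $p$ divides the $\pi$-number $b/g$. Neither Lemma (3.6) nor Lemma \ref{9.1} alone gives this, and you need it before you can even begin counting.

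Second, and more substantially, the untwisted case is where all the work lives, and your sketch does not engage with its real structure. The untwisted degrees are not $\pi$-numbers times a fixed quantity; they have the form $x \cdot t(\psi)$ where $x$ is a $\pi$-number divisible by $p$ and $t(\psi) = |K_0 : I_{K_0}(\psi)|$ is a $\pi'$-number that varies with $\psi$. The key observation (the paper's Step 3) is that the $(2,\pi)$-hypothesis forces $t(\psi) \in \{1\} \cup \{\text{primes}\} \cup \{g\}$, and the proof then partitions $\scr Y$ into $\scr A, \scr B, \scr C$ according to these three possibilities, with a further delicate split of $\scr B$ into $\scr B_1$ and $\scr B_2$ depending on whether $H \sbs I_K(\psi)$ and whether $\psi$ extends there. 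The bound $5{,}697$ arises as $166 + 1 + 5{,}521 + 9$ from those four pieces, with the $5{,}521$ requiring the nontrivial Step 8 estimate that $\{t(\psi) : \psi \in \scr B_1\}$ has at most $|\pi^3| \le 120$ elements. Your appeal to ``character degrees of a controlled quotient of $G/C$'' plus Theorem (2.10) of \cite{partone} or a Lemma \ref{2.8}-style count does not capture any of this; Theorem (2.10) applies to a different setting (abelian $K$, a full quotient $G/L'$), and no generic count over $G/C$ controls the $\pi'$-part $t(\psi)$. Without the $t(\psi) \in \{1, \text{prime}, g\}$ trichotomy and the ensuing case analysis, the untwisted contribution cannot be bounded by the stated number.
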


We prove this lemma in a series of steps.

\setcounter{stepcount}{1}

\begin{proof}
Let $\psi \in \scr Y$, and let $\hat\psi \in \irr C$ be an extension $\psi$. Write $\psi(1) = p$.  Recall that $b \in \cd G$ is the unique degree of $G$ divisible by $g$.  Also, note that by Lemma \ref{7.3}, Lemma \ref{7.2} applies to the group $K/L$.  In particular, we have $gh \in \cd K$.

% Step 1

\step $pg \in \cd {K_0}$.

\begin{proof}
By Lemma \ref{10.1}, $K_0/L$ is a Frobenius group. It follows from Theorem 12.4 of \cite{text} that either $\V {\hat\psi} \sbs L$ or $pg \in \cd {K_0}$.  Since $\hat\psi_L = \psi$ is irreducible, we cannot have $\V {\hat\psi} \sbs L$.  Thus, $pg \in \cd {K_0}$, as desired.
\end{proof}

% Step 2

\step $p \in \pi$.

\begin{proof}
Let $d \in \cd G$ lie over $pg \in \cd {K_0}$.  Then $g$ divides $d$, and so $d=b$.  Now, $b$ also lies over $gh \in \cd K$. Since both $h$ and $|G:K|$ are $\pi$-numbers, $b/g$ is a
$\pi$-number.  We conclude that $p$ divides $b/g$, and so $p \in \pi$.
\end{proof}

% Step 3

Write $T = I_{K_0}(\psi)$ and $t = |K_0:T|$.

\step $\psi$ extends to $T$, and $\omega(t) \leq 1$ or $t=g$.

\begin{proof}
Since $K_0/L$ is a Frobenius group, $K_0/C$ is cyclic, and in particular, $T/C$ is cyclic.  Now, $\psi$ extends to $C$, and since $|T/C|$ is coprime to $|C|$, we have by Corollary (11.22) of \cite{text} that $\psi$ extends to all Sylow subgroups of $T$ for primes not dividing $|C|$. Hence $\psi$ extends to $T$.

Assume $\omega(K_0:T) \geq 2$.  We want to show that in this case $t=g$.  By the Clifford Correspondence, $pt \in \cd {K_0}$.  Now, $t$ divides $|K_0:C| = g$, and so $pt$ divides $pg$.  In particular, $\omega (pg,pt) = \omega (pt) \geq 3$. As $|G:K_0|$ is a $\pi$-number, $K_0$ satisfies the $(2,\pi)$-hypothesis by Lemma (3.6) of \cite{partone}.  It follows that $\subdash {(pg)}\pi = \subdash {(pt)}\pi$.  By Step 2, $p \in \pi$.  Also, $g$ is a $\pi'$-number, and since $t$ divides $g$, we have $t$ is also a $\pi'$-number.  We conclude that $g=t$, as desired.
\end{proof}

% Step 4

\step All degrees $a \in \cd {G \mid \psi}$ have the form $a=xt$ or $a=xg$, where $x$ is a $\pi$-number divisible by $p$.

\begin{proof}
By Step 3, $\psi$ extends to $T$.  If $T > C$, then $T/L$ is a Frobenius group and $\cd {T/L} = \{ 1, g/t \}$. If $T=C$, then $g/t = 1$, and since $T/L = C/L$ is abelian, we again have $\cd {T/L} = \{ 1, g/t \}$. Thus, by Gallagher's theorem, $\cd {T \mid \psi} = \{ p, pg/t \}$. Applying the Clifford Correspondence, we obtain $\cd {K_0 \mid \psi} = \{ pt, pg \}$.  Since $|G:K_0|$ is a $\pi$-number, it follows that all degrees in $\cd {G \mid \psi}$ have the form $ypt$ or $ypg$, where $y$ is a $\pi$-number.  Since $p \in \pi$, the step is proved.
\end{proof}

\medskip
For $\psi \in \scr Y$, write $t(\psi) = |K_0:I_{K_0}(\psi)|$. Define $\scr A = \{ \psi \in \scr Y \mid t(\psi) = 1 \}$, $\scr B = \{ \psi \in \scr Y \mid t(\psi) \mbox{ is prime}\}$, and $\scr C = \{ \psi \in \scr Y \mid t(\psi) = g\}$.  We have by Step 3 that $\scr Y = \scr A \cup \scr B \cup \scr C$.

% Step 5

\step $|\cd {G \mid \scr A}| \leq 166$.

\begin{proof}
Let $\psi \in \scr A$, and note that $T = K_0$.  Now, $\psi$ extends to $T$ by Step 3.  Using Gallagher's Theorem, $\cd {K_0 \mid \psi}$ consists of degrees of the form $pa$, where $a \in \cd {K_0/L}$.  Now since $K_0/L$ is a Frobenius group, we have $\cd {K_0/L} = \{ 1, g\}$.  Thus, $\cd {K_0 \mid \psi} = \{ p, pg \}$.

Note that $b$ is the unique character degree of $G$ which lies over $pg$.  Let $d \in \cd G$ lie over $p \in \cd {K_0}$.  Then $d/p$ divides $|G:K_0|$, which is a $\pi$-number.  Since also $p \in \pi$, we see that $d$ is a $\pi$-number.

Let $X = \cd {G \mid \scr A} - \{ b \}$.  Observe that $X$ is a set of $\pi$-numbers that satisfies the two-prime hypothesis since $X \sbs \cd G$.  Since $|\pi| \leq 8$, we apply Lemmas \ref{2.7} and \ref{2.8} to compute
\[ \textstyle
|X| \leq |\pi^{\le 3}| = |\pi^0| + |\pi^1| + |\pi^2| + |\pi^3| \leq 1 + 8 + 36 + 120 = 165.
\]
We conclude that $|\cd {G \mid \scr A}| \leq |X| + 1 \leq 166$, as desired.
\end{proof}

% Step 6

\step $|\cd {G \mid \scr C}| \leq 1$.

\begin{proof}
Let $\psi \in \scr C$, and note that $I_{K_0}(\psi) = C$. By the Clifford Correspondence, $|K_0:C| = g$ divides all degrees in $\cd {K_0 \mid \psi}$.  Hence, $g$ divides all elements of $\cd {G \mid \scr C}$.  Since $b$ is the unique degree of $G$ divisible by $g$, we deduce that $|\cd {G \mid \scr C}| \leq 1$, as desired.
\end{proof}

\medskip
Let $\psi \in \scr B$, and write $S = I_K (\psi)$. Recall that $T = I_{K_0} (\psi)$ and $t = |K_0:T|$.  We now consider two main cases.  In Case 1, we assume $H \not\sbs S$ or $\psi$ does not extend to $S$.  In Case 2, we assume both $H \sbs S$ and $\psi$ extends to $S$.  Write $\scr B_1 \sbs \scr B$ for the set of characters for which Case 1 holds.  Write $\scr B_2 \sbs \scr B$ for the set of characters for which Case 2 holds.

% Step 7

\step Assume $\psi \in \scr B_1$. Then there exists $xt \in \cd G$, where $x$ is a $\pi$-number with $\omega(x) \geq 3$.

\begin{proof}
First note that Step 3 implies that $\psi$ extends to $\chi \in \irr T$. Since $|S:T|$ divides $|K:K_0|$,  the index $|S:T|$ is a $\pi$-number.  Let $pu \in \cd {S|\chi}$, where $u$ is a divisor of $|S:T|$.  Applying the Clifford Correspondence, $pu|K:S| \in \cd K$.  Note that $|K:S| = tv$, where $v$ is a $\pi$-number. Thus, $ptuv \in \cd K$.

Suppose $H \not\sbs S$.  Then $|K:S| = tv$ has nontrivial
$\pi$-part, and in particular, $v > 1$.  Suppose that $\psi$ does not extend to $S$.  Then we cannot have $p \in \irr {S|\psi}$, and so, in this case $u > 1$.  We obtain $ptw \in \cd K$, where $w = uv$ is a $\pi$-number and $w > 1$.

Because $G/K$ is a nonabelian $\pi$-group, we may choose $ptwz \in \cd G$ lying over $ptw \in \cd K$ such that $z > 1$.  If we write $x = pwz$, then $x$ is a $\pi$-number with $\omega(x) \geq 3$ and $tx \in \cd G$, as desired.
\end{proof}

% Step 8

\step The set $\{ t(\psi) \mid \psi \in \scr B_1 \}$ has at most 120 elements.

\begin{proof}
For all $\psi \in \scr B_1$, use Step 7 to obtain $t(\psi)x(\psi) \in \cd G$, where $x(\psi)$ is a $\pi$-number depending on $\psi$ with $\omega(x(\psi)) \geq 3$.  The set of these $t(\psi)x(\psi)$ is contained in $\cd G$, and hence, satisfies the two-prime hypothesis.

Assume that  $\psi, \psi' \in \scr B_1$ and that there is some integer $u \in \pi^3$ such that $u$ divides both $x(\psi)$ and $x(\psi')$.  Then $\omega(t(\psi)x(\psi), t(\psi')x(\psi')) \geq \omega(u) = 3$.  By the two-prime hypothesis, we deduce that $t(\psi)x(\psi) = t(\psi')x(\psi')$, and it must be that $t(\psi) = t(\psi')$, since these are $\pi$-parts of equal numbers.

Because each $x(\psi)$ has $\omega(x(\psi)) \geq 3$, each
$x(\psi)$ is divisible by some number $u \in \pi^3$.  In light of the previous paragraph, it follows that there are at most $|\pi^3|$ distinct numbers $t(\psi)$.  By Lemma \ref{2.7}, we may use the fact that $|\pi| \le 8$ to see that $|\pi^3| \leq {{8+3-1}\choose 3} = 120$, and this proves the step.
\end{proof}

% Step 9

\step $|\cd {G \mid \scr B_1}| \leq 5,\!521$.

\begin{proof}
By Step 4, all degrees $a \in \cd {G \mid \scr B_1}$ have the form $a = t(\psi)x$ or $a = gx$, where $\psi \in \scr B_1$ and $x$ is a $\pi$-number.  If $a = gx$, then $g$ divides $a$ and it follows that $a = b$.  Write $X = \cd {G \mid \scr B_1} \setminus \{ b \}$.  Then $\subdash X\pi$ is the set of $t(\psi)$ for $\psi \in \scr B_1$.  By Step 8, we have  $|\subdash X\pi| \leq 120$.  Hence, by Lemmas \ref{2.7} and \ref{2.8}, we compute
\[ \textstyle
|X| \leq 120 \cdot |\pi^{\leq 2}| + |\pi^{\leq 3}| \leq
120 \cdot \left[ 1 + 8 + 36 \right] + 120 = 5,\!520.
\]
We obtain the conclusion that $|\cd {G \mid \scr B_1}| = |X| + 1 \leq 5,\!521$.
\end{proof}

\medskip
Recall that we have written $S = I_K(\psi)$, $T = I_{K_0} (\psi)$, and $t = |K_0:T|$.  We now consider characters $\psi \in \scr B_2$, that is, $H \sbs S$ and $\psi$ extends to $S$.  Note that in this case $|K:S| = |K_0:T| = t$.

% Step 10

\step Assume $\psi \in \scr B_2$.  Then $\cd {K \mid \psi}$ is
exactly the set of numbers $a$ of the form $a = pt$ or $a = pgh_0$, where $h_0 \in \cd {H/L}$.  In particular, $pt, pgh \in \cd {K \mid \psi}$.

\begin{proof}
We first claim that $\cd {S/L}$ is the set of degrees of the form $(g/t)h_0$, where $h_0 \in \cd {H/L}$. Note that since $\psi \in \scr B$, we have $|K_0:T|$ is prime, and in particular $T > C$. Thus, $T/L$ is a Frobenius group with kernel $C/L$, and hence, $\cd {T/L} = \{ 1, g/t\}$.

We now work to show that all nonlinear characters of $S/L$ have degree divisible by $g/t$.  Let $\chi \in \irr {S/L}$ and assume $\chi_{T}$ has constituents of degree $g/t$.  Then clearly $g/t$ divides $\chi(1)$.  Assume $\chi_{T}$ has linear constituents.  We have $C/L = (T/L)'$, and so, $\chi_C$ is a multiple of the principal character.  In particular, $\chi \in \irr {S/C}$, and since $S/C$ is abelian, $\chi$ is linear, as desired.

Suppose $\chi \in \irr {S/L}$ is nonlinear.  Then $\chi(1) = (g/t)x$ for some integer $x$ which divides $|S:T|$. Since $|S:T|$ is coprime to $|S:H|$, it follows that $\chi_H$ has constituents of degree $x$ and the claim is proved.

We have that $\psi$ extends to $S$.  Hence, by Gallagher's Theorem, the elements of $\cd {S \mid \psi}$ have the form $p$ or $p(g/t)h_0$ for $h_0 \in \cd {H/L}$. By the Clifford correspondence, $\cd {K \mid \psi} = \{ pt \} \cup \{ pgh_0 \mid h_0 \in \cd {H/L}\}$, as desired.  Applying Lemma \ref{7.2} (b), $K/C$ acts on $C/L$ with a regular orbit.  In particular, $H/C$ acts on $C/L$ with a regular orbit.  It follows by Lemma \ref{7.1} that $h \in \cd {H/L}$, and thus, $pgh \in \cd {K \mid \psi}$.  This completes the proof of Step 10.
\end{proof}

\medskip
From Step 10, we know, in particular, that $pt \in \cd K$.  Since $G/K$ is a Frobenius group, it follows from Theorem 12.4 of \cite{text} that we may choose $c \in \cd G$ lying over $pt \in \cd K$ such that $ptf$ or $ptq$ divides $c$.  We now work to prove that we can reduce to the case where $f$ is not prime, $ptf$ divides $c$, and $ptq$ does not divide $c$.

%% Step 11

\step Assume $\psi \in \scr B_2$ and let $c \in \cdover G{pt}K$. Assume $ptf$ divides $c$.  Then $f$ is not prime.

\begin{proof}
Assume otherwise.  By Step 10, $pgh \in \cd K$, and so $b$ divides $pgh|G:K|$.  Write $b = pghf^nq^m$ where $0 \leq n \leq 1$ and $m \geq 0$.  Using Theorem 12.4 of \cite{text}, we may choose $d \in \cdover G {pgh} K$ with $d/pgh$ divisible by $q$ or $f$.  As this $d$ is divisible by $g$, we have that $d = b$ and $b/pgh$ is divisible by $p$ or $f$.  Hence, $n > 0$ or $m > 0$.

Since $ptf$ divides $c$ and $c$ divides $pt|G:K|$, write $c = ptfq^e$ where $e \geq 0$.  Now, $\subdash c\pi = t$ and $\subdash b\pi = g$, but $t$ is prime since $\psi \in \scr B_2$.  Hence, $b \ne c$, and so, $\omega(b,c) \leq 2$ by the two-prime hypothesis in $G$.  However, clearly $pt$ divides $(b,c)$ and so $(b/pt, c/pt) = 1$.  It follows that $n = 0$ and $f$ does not divide $h$.   Also, we have $m \geq 1$ and $e =
0$.  Because $h$ is a $\pi$-number and $\pi = \{ f,q \}$, it must be that $h$ is a power of $q$.

Now, $N/K_0$ is a $q$-group.  If $N/K_0$ is nonabelian, then by Lemma \ref{10.4}, $|\cd G| \leq 268$, and the theorem is true in this case.  Thus, we may assume $N/K_0$ is abelian.

Note that $K/H \cong K_0/L$ is cyclic, and so $(G/H)/\cent {G/H}{K/H}$ is abelian.  Since $G/K$ is a Frobenius group and $\cent {G/H}{K/H} \nor G/H$, this implies that $N/H \sbs \cent {G/H}{K/H}$.  In particular, since $N/K$ is a $q$-group and $q$ does not divide $|K:H|$, we must have $N/H$ is abelian and so $N' \sbs H$.  Therefore, $N' \sbs K_0 \cap H = C$, and so $N/C$ is abelian.  Write $N/C = K_0/C \times Q/C$, where $Q/C$ is a $q$-group.

Assume $Q/L$ is a $q$-group.  Since $C/L$ is a $G$-chief factor, we must have $C/L \sbs \zent {Q/L}$.  Also, $Q/C$ is abelian, so $(Q/L)' \sbs C/L$.  Again $C/L$ is a $G$-chief factor, so either $Q/L$ is abelian or $C/L = (Q/L)'$.  First assume $Q/L$ is abelian, and let $\lambda \in \irr {C/L}$ be nonprincipal.  Now, $\lambda$ extends to $Q$, and since $K_0/C$ acts Frobeniusly on $C/L$, we also have $I_N(\lambda) = Q$.  Applying the Clifford correspondence, $g \in \cd N$.  Then $b$ divides $g|G:N| = gf$ which cannot occur since we have proved $pqgh$ divides $b$.  Hence, we may assume $(Q/L)' = C/L$,
and since $C/L$ is abelian and central in $Q/L$, we consider the action of $K_0/C$ on $Q/L$.  Under this coprime action, $K_0/C$ centralizes $Q/C = (Q/L)/(Q/L)'$ and acts nontrivally on $(Q/L)'$.  This is a contradiction, and therefore $Q/L$ is not a $q$-group.

Now, $C \sbs H \sbs Q$ and $Q/H$ is $G$-isomorphic to $N/K$, so $Q/H$ is a $G$-chief factor.  Let $U/L = \cent {Q/L} {C/L}$.  Observe that $H/C$ acts faithfully on $C/L$ by Lemma \ref{7.2} (b), so $U \cap H = C$.  Since $U \nor G$, we have either $UH=Q$ or $U=C$.  Let $1 \not= \lambda \in \irr {C/L}$ and write $V = I_G(\lambda)$.

First assume $UH = Q$.  Since $q$ does not divide $|C:L|$, it must be that $\lambda$ extends to $U \sbs V$.  Let $x \in \cd
{V  \mid \lambda}$ with $x$ dividing $|V:U|$.  Now, $g$ divides $|G:V|$ since $K_0/C$ acts Frobeniusly on $C/L$.  By the Clifford correspondence, $x|G:V| \in \cd G$, and since this degree is divisible by $g$, we have $b = x|G:V|$.  Hence $b$ divides $|G:U|$. However, $|G:U| = fgh$ and we have proved $pqgh$ divides $b$.  This is a contradiction.

Hence, we may assume $U=C$.  Then $Q/C$ is an abelian $q$-group acting faithfully on $C/L$.  Since $K_0/C$ is cyclic and
$(|Q:C|,|C:L|) = 1$, we have that $\lambda$ extends to $V \cap N$ by Corollaries (11.22) and (6.28) of \cite{text}.  Thus, the characters of $N$ lying above $\lambda$ have degree $gq^a$ where $q^a$ is exactly the size of the $Q/C$-orbit of $\lambda$. Now, $|G:N|$ is a $q'$-number, so $q^a$ must also be the $q$-part of $b$.  Thus, $Q/C$ acts half-transitively on $C/L$.  By Lemma 2.6 from \cite{riedl}, it follows that $Q/C$ is cyclic. Since the action is faithful and $C/L$ is a $G$-chief factor, it must be that the action of $Q/C$ on $C/L$ is Frobenius.

Since $N/C$ acts Frobeniusly on $C/L$, we obtain $V \cap N = C$.  Thus, $V/C \cong VN/N$ is cyclic since $G/N$ is cyclic.  It follows by Corollary (11.22) from \cite{text}
that $\lambda$ extends to $V$.  Using the Clifford Correspondence, $|G:V| \in \cd G$, and as $g$ divides $|G:V|$, we have $b = |G:V|$.

Because $N/K$ is both cyclic and elementary abelian, $|N:K|=q$ and hence $|G:K| = fq$. Since we have both $\omega(b/gh) \geq 2$ and $b/gh$ divides $|G:K|$, we see that $b = fqgh$.  Thus, $|G:V| = b = |G:C|$ and so $V=C$.  By Lemma \ref{7.1}, we conclude that $G/C$ acts Frobeniusly on $C/L$.

We have $\psi \in \irr L$ with $\psi(1) = p$ and $\psi$ extends to $\hat \psi \in \irr C$.  Clearly, $\V {\hat\psi} \not\sbs L$, so $p|G:C| \in \cd G$ by the Theorem 12.4 of \cite{text} applied to $G/L$.  Because $g$ divides $p|G:C|$, we see that $b = p|G:C|$.  Also $b = |G:C|$, which is a contradiction.  This completes the proof of Step 11.
\end{proof}

% Step 12

\step Assume $\psi \in \scr B_2$.  Let $c \in \cdover {G}{pt}K$ with $c > pt$.  Then $c$ is not divisible by $ptq$.

\begin{proof}
First, observe that $pt \in \cd K$ by Step 10.  Also, since $G/K$ is nonabelian, we may choose $c \in \cd G$ lying above $pt$ with $c > pt$.   Assume that $ptq$ divides $c$.

Write $c = ptf_0 q^a$ where $a \geq 1$ and $f_0$ is a divisor of $f$. Recall that by Step 10, $pgh \in \cd K$. Since $G/K$ is a Frobenius group, we use Theorem 12.4 of \cite{text} to choose $d \in \cdover G{pgh}K$ such that $d/pgh$ is divisible by either $q$ or $f$.  Since $g$ divides $d$, it must be that $d=b$ and $b/pgh$ is divisible by either $q$ or $f$.   Hence, we may write $b = pghf_1 q^m$, where $m \geq 0$ and $f_1$ is a divisor of $f$. We must have either $f_1 = f$ or $m > 0$.

Recall that since $\psi \in \scr B_2$, the index $t = |K_0:T|$ is prime, where we are continuing to write $T = I_{K_0}(\psi)$.  Because $\subdash b\pi = g$ and $\subdash c\pi = t$, we cannot have $b = c$, as before.  Applying the two-prime hypothesis in $G$, we deduce that $\omega(b,c) \leq 2$.  Now, $pt$ divides $(b,c)$, and so, $f_0q^a$ and $hf_1 q^m$ are coprime integers. Thus, $m = 0$, $q$ does not divide $h$, and $f_1 = f$.  We see that $b = fpgh$ and $f_0 = 1$.  Also, $b \in \cdover G{gh}K$, so $pf$ divides $|G:K|$.  This forces $p = q$.  Since $h$ is a $\pi$-number and $q$ does not divide $h$, it follows that
$\pi(h) \sbs \pi(f)$.

We next work to prove that $K/C$ acts Frobeniusly on $C/L$. Recall that the character degrees of $K/L$ are exactly the numbers $gh_0$, where $h_0$ divides $h$ and $gh_0$ is the size of a $K/C$-orbit in $\irr {C/L}$.  Let $gh_0 \in \cd K$ where $h_0$ is some divisor of $h$.  Then $b \in \cdover G{gh_0}K$ and so $b$ divides $gh_0|G:K|$.  Since $q^2$ does not divide $b$, we see that $b$ divides $gh_0\cdot fq$.  Now, $b = fghq$, which implies $h=h_0$.  Hence, $gh$ is the only nontrivial irreducible character degree of $K/L$, and in particular, $gh$ is the only nontrivial orbit-size of the action of $K/C$ on $C/L$.  We deduce that $K/L$ is a Frobenius group with kernel $C/L$.

In particular, since $K/C$ is abelian, it follows that $K/C$ must be cyclic.  Now, $(G/C)/\cent {G/C} {K/C}$ must be abelian.  We must have $N/C \sbs \cent {G/C} {K/C}$ since $G/K$ is a Frobenius group.  Recall that $q$ does not divide $h$, and so $q$ does not divide $|K:C|$.  We conclude that $N/C$ is abelian.

Since $N/C$ is abelian, we may write $N/C = K/C \times Q/C$, where $Q/C$ is a Sylow $q$-subgroup of $N/C$.  Because $N/K$ is a nontrivial $q$-group, $Q > C$.  We want to show that if $Q/L$ is a $q$-group, then $Q/L$ is abelian.  Assume otherwise.  As $Q/C$ is abelian, it must be that $(Q/L)' = Q/C$.  Now, $K/C$ acts coprimely on $Q/L$.  Observe that $K/C$ acts trivially on $Q/C$ but faithfully on $C/L$, which is a contradiction.  Therefore, if $Q/L$ is a $q$-group, then $Q/L$ is abelian.

Let $B/C = \cent {N/C} {C/L}$.  Since $K/C$ acts Frobeniusly on $C/L$, we have $B \cap K = C$.  Also, $N/K \cong Q/C$ is a $G$-chief factor and $B \nor G$.  Thus, either $B = Q$ or $B = C$.  Let $1 \ne \lambda \in \irr {C/L}$, and write $V = I_G(\lambda)$.

Suppose $B = C$.  Then the abelian group $N/C$ acts faithfully on $C/L$, and we would like to show that the action is coprime.  By Lemma \ref{7.2} applied to the group $K/L$, we know that $|K/C|$ is coprime to $|C/L|$.  Assume that $|C/L|$ is a power of $q$.  Then $Q/L$ is a $q$-group, and we have proved that in this case $Q/L$ is abelian. Then $B \sps Q > C$, which is a contradiction.  Thus, in the case where $B=C$, the action of $N/C$ on $C/L$ is faithful and coprime.

{}From Lemma (2.3) of \cite{partone}, the action of $N/C$ on $C/L$ has a regular orbit.  In light of Lemma \ref{7.1}, $N/C$ has a regular orbit in its action on $\irr {C/L}$.  It follows that $|N:C| \in \cd N$.  However, $|N:C| = ghq^e$, so $ghq^e$ divides $b$.  Since $q^2$ does not divide $b$, we must have $e = 1$. Therefore, $|G:C| = fqgh = b$.

Observe that $\lambda$ has a uniquely determined extension to $V \cap N$ by Corollary (6.28) of \cite{text} since $|(V\cap N)/C|$ is coprime to $|C/L|$.  Furthermore, $VN/N \cong V/(V\cap N)$ is cyclic, so by Corollary (11.22) of \cite{text}, $\lambda$ extends to $V$.  The Clifford Correspondence yields $|G:V| \in \cd G$.  Since $K_0/C$ acts Frobeniusly on $C/L$, we may use Lemma \ref{7.1} to see that $g$ divides $|G:V|$. Hence $b = |G:V|$.

In this case, we have proved $|G:V| = b = |G:C|$.  Since $C \sbs V$, it follows that $V=C$.  We have proved for all nonprincipal characters $\lambda \in \irr {C/L}$ that $I_G(\lambda) = C$.  It follows by Lemma \ref{7.1} that $G/C$ acts Frobeniusly on $C/L$.

We have $\psi \in \irr L$ with $\psi(1) = p$, and $\psi$ extends to $\hat \psi \in \irr C$.  Clearly, $\V {\hat\psi} \not\sbs L$, so we may apply Theorem 12.4 of \cite{text} to $G/L$ to obtain $|G:C|p \in \cd G$.  But, $g$ divides $|G:C|p$, so $|G:C| = b = |G:C|p$, which is a contradiction.

We may now assume $B=Q$.  If $Q/L$ is a $q$-group, then $Q/L$ is abelian, and so, $\lambda$ extends to $Q$.  Otherwise, $|Q/C|$ is coprime to $|C/L|$, and so $\lambda$ extends to $Q$ by Corollary (6.28) of \cite{text}.  Now, $B \sbs V$, so the Clifford correspondence implies that $|G:V|x \in \cd
{G|\lambda}$ where $x$ divides $|V:B|$.  As $g$ divides $|G:V|$, we see that $|G:V|x = b$.  In particular, $b$ divides $|G:B| = fgh$. But $b = fghq$, which is a contradiction.  We conclude that $ptq$ does not divide $c$, as desired.
\end{proof}

% Step 13

\step Assume $\psi \in \scr B_2$.  Let $\phi \in \irr {K \mid \psi}$ with $\phi(1) = pt$, and let $\chi \in \irr {G \mid \phi}$ with $\chi(1) > \phi(1)$.  Then $\chi(1) = ptf$.

\begin{proof}
By Step 10, $pt \in \cd {K \mid \psi}$, and so, $\phi$ exists.  Also, since $G/K$ is nonabelian, $\chi$ exists by Lemma (3.1) of \cite{partone}.  Now, $\chi(1)/pt$ is divisible by some prime in $\pi(f) \cup \{ q\}$.  By Step 12, we $q$ does not divide $\chi(1)/pt$.  Hence, for some prime $r \in \pi(f)$, the degree $\chi(1)$ is divisible by $ptr$.

As $G/K$ is a Frobenius group, it follows from Theorem 12.4 of \cite{text} that we may choose $c \in \cd {G \mid \phi}$ with $c$ divisible by $ptq$ or $ptf$.  By Step 12, we cannot have $c$ divisible by $ptq$, and so, $ptf$ divides $c$.  This implies that $c$ divides $pt|G:K|$ and $c/pt$ is not
divisible by $q$.  We deduce that $c = ptf$. Now, $\omega(c,\chi(1)) \geq \omega(ptr) = 3$. By the two-prime hypothesis in $G$, we obtain $\chi(1) = c = ptf$, as desired.
\end{proof}

% Step 14

\step $|\cd {G \mid \scr B_2}| \leq 9$.

\begin{proof}
Let $\psi \in \scr B_2$. By Step 10, all the degrees $a \in \cd {K \mid \psi}$ have the form $a = pt$ or $a = pgh_0$ where $h_0 \in \cd {H/L}$.  Recall that the only degree of $G$ lying over $pgh_0$ is $b$, since $b$ is the unique degree of $G$ divisible by $g$. Choose $d \in \cd G$ lying over $pt \in \cd K$ with $d > pt$. By Step 13, $d = ptf$. Thus, $f$ is not prime by Step 11.

If we have $\psi' \in \scr B_2$ with $p' = \psi'(1)$ and $t' = |K_0:I_{K_0}(\psi')|$, then we may construct $d' = p't'f$ as above.  Assume $p=p'$.  By the two-prime hypothesis in $G$, we have $d = d'$.  It follows that for each prime $p \in \pi$ there are at most two irreducible character degrees lying over all $\psi \in \scr B_2$ with $\psi(1) = p$, namely $b$ and $ptf$.  We conclude that $|\cd {G \mid \scr B_2}| \leq |\pi| + 1 \leq 9$.
\end{proof}

Recall that $\scr Y = \scr A \cup \scr B_1 \cup \scr B_2 \cup \scr C$.  By Steps 5, 6, 9, and 14, we compute
$|\cd {G \mid \scr Y}| \leq 166 + 1 + 5,\!521 + 9 = 5,\!697$.
\end{proof}

%%
%% (10.6) Theorem
%%

We are now ready to consider the general case of the $K$ Nonabelian Hypothesis.

\begin{theorem} \label{10.6}
Assume the $K$ Nonabelian Hypothesis. Suppose either $|C:L| \geq r^3$ or there is some prime $s \ne r$ with
$K_0/\Oh sC$ nonabelian.  Then $|\cd G| \leq 462,\!515$.
\end{theorem}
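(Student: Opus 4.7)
The plan is to mirror the strategy of Theorem \ref{8.3}, but applied to the lower Frobenius chain $K_0/L$ with kernel $C/L$ (which is Frobenius by Lemma \ref{10.1}) rather than to the chain $G/K$. The natural tool would be Theorem \ref{7.7} applied with $\Gamma = G$, substituting $K_0 \to G$, $C \to N$, $L \to K$, $g \to f$, and $r \to q$, which would yield $C$ nilpotent and then reduce to Theorem \ref{10.3}. However, the hypothesis of Theorem \ref{10.6} is precisely the statement that either condition (3) (which requires $|C:L| \le r^2$) or condition (5) (which requires $K_0/\Oh pC$ abelian for all $p \ne r$) of Theorem \ref{7.7} fails for this chain, so that direct route is blocked.

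I therefore begin by invoking Theorem \ref{10.5} to bound $|\cd{G \mid \scr Y}| \leq 5{,}697$, which accounts for all characters of $G$ lying over prime-degree members of $\irr L$ that extend to $C$. For the remaining degrees I split by the hypothesis. If $|C:L| \geq r^3$, then $C/L$ is elementary abelian of rank at least three, so $K_0/L$ falls into the rank-$\ge 3$ setting of Theorem \ref{8.1}. Otherwise $K_0/\Oh sC$ is nonabelian for some $s \ne r$, and following the template of Lemma \ref{8.2} I choose $M \nor G$ maximal with $\Oh sC \sbs M \sbs K_0$ and $K_0/M$ nonabelian; Lemma \ref{7.3} gives $(K_0/M)' = C/M$, and since $K_0/M$ cannot be nilpotent (as $K_0' \sbs \Oh p{K_0}$ for all $p$), Lemma 12.3 of \cite{text} forces $K_0/M$ to be a Frobenius group with kernel $C/M$. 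Setting $D = M \cap L$, the quotient $K_0/D$ is then a Frobenius group whose kernel $C/D$ is a direct product of elementary abelian groups for the distinct primes $r$ and $s$ — the two-prime case of Theorem \ref{8.1}. In each subcase Theorem \ref{8.1} applied to $K_0$ bounds the relevant character degrees of $K_0$; Lemma \ref{9.1} lifts this from $\cd{K_0}_{\pi'}$ to $\cd G_{\pi'}$ since $|G:K_0|$ is a $\pi$-number, and Lemmas \ref{2.7} and \ref{2.8} with $|\pi| \leq 8$ convert the $\pi'$-part bound into a bound on $|\cd G|$.

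The main obstacle is that Theorem \ref{8.1} was stated for a group satisfying the full two-prime hypothesis, whereas in this application $K_0$ satisfies only the weaker $(2,\pi)$-hypothesis (by Lemma 3.6 of \cite{partone}). The argument of Theorem \ref{8.1} must therefore be re-run carefully with $\pi'$-parts of character degrees in place of whole degrees, using the $(2,\pi)$-hypothesis of $K_0$ wherever the original used the two-prime hypothesis of $G$, and being mindful that the Frobenius-quotient analysis and the connected-component bound on $|\cd{K_0 \mid L'}|$ have to survive this replacement. Once this adaptation is verified, summing the $5{,}697$ from Theorem \ref{10.5} with the adapted bound on $|\cd{K_0}_{\pi'}|$ multiplied by the lift factor $|\pi^{\leq 3}|$ (for $|\pi| \le 8$) and combined via Lemma \ref{2.8} produces the claimed $|\cd G| \leq 462{,}515$.
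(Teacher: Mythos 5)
Your proposal and the paper's proof diverge after the common ingredients (Lemma \ref{10.1}, Lemma \ref{10.5}, the construction of a suitable normal subgroup below $C$). The paper does \emph{not} reduce to Theorem \ref{8.1} applied to $K_0$. Instead it sets $U \nor G$ with $\Oh sC \sbs U \sbs K$ maximal with $K/U$ nonabelian (so $U$ lies in $K$, not $K_0$, and Lemma \ref{7.4}(b) applies), puts $L_0 = U \cap L$, and then partitions $\irr{L_0}$ via Lemma (2.14) of \cite{partone} into ``type (1)'' characters (where vanishing-off subgroups contain the information) and ``type (2)'' characters (whose degrees are forced to be $\pi$-numbers). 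The bound on $|\cdover GX{L_0}|$ (type (1)) comes from a graph-component argument with $\leq 11$ components; the set $Y$ of type (2) degrees is then stratified by $\omega(y)$, with $Y^{(\ge 3)}$, $Y^{(=2)}$, $Y^{(=1)}$ (this is where Lemma \ref{10.5} enters, together with a separate analysis of the non-extending characters $\scr Y_1$), and $Y^{(=0)}$ each handled by distinct arguments — the last using It\^o's theorem to split $\cd{G/L_0'}$ into pieces bounded by Theorem \ref{10.3}. The total $462{,}515$ is the sum of these six contributions.

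The gap in your proposal is the step you yourself flag as ``must be re-run carefully'': you propose to apply a $(2,\pi)$-hypothesis analogue of Theorem \ref{8.1} to $K_0$, but no such analogue exists in the paper or in \cite{partone}, and producing one is not a routine ``$\pi'$-parts'' substitution. Theorem \ref{8.1}'s proof bounds $|\cd{G/L'}|$ by invoking Theorem (2.10) of \cite{partone}, which is itself a substantial structural result proved under the full two-prime hypothesis with $G/K$ Frobenius and $K$ abelian; adapting that result to a normal subgroup $K_0$ satisfying only the $(2,\pi)$-hypothesis, with $|G:K_0|$ a $\pi$-number, requires its own argument that has not been supplied. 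Because of this, your claimed final bound is not actually computed — you write that the sum ``produces the claimed $|\cd G| \leq 462{,}515$'' without exhibiting what the adapted Theorem \ref{8.1} gives or how the lift factor combines with the $5{,}697$ from Lemma \ref{10.5}. (As an aside, if the adaptation preserved Theorem \ref{8.1}'s constant $210$ you would land far \emph{below} $462{,}515$, so the stated target is not evidence the scheme works.) You also need to justify $K_0' \sbs \Oh p{K_0}$ for all $p$ before Lemma \ref{7.3} and Lemma 12.3 of \cite{text} can be used inside $K_0$; the paper proves this in Theorem \ref{11.2}'s proof under the complementary Small Kernel Hypothesis, but you assert it without argument here. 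The overall plan — splice Lemma \ref{10.5} with a Frobenius-quotient argument below $C$ — is a sensible instinct, but as written it is a sketch with an unfilled hole where the main technical work of the paper's proof lives.
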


\begin{proof}
In the latter case, let $U \nor G$ with $\Oh sC \sbs U \sbs K$ and let $U$ be maximal with $K/U$ nonabelian.  By Lemma \ref{7.4} (b), we have $(K/U)' = C/U$.  In the first case, simply let $U = L$.  Write $L_0 = U \cap L$.

Note that Lemma \ref{10.1} implies that $K_0/U$ is a Frobenius group with kernel $C/U$.  Thus, $K_0/L_0$ is a Frobenius group with kernel $C/L_0$.  Also, Theorem \ref{10.3} applies to $U$ in place of $L$.

Now, $K_0/L_0$ is a Frobenius group, so if $\psi \in \irr {L_0}$, we have two possibilities by Lemma (2.14) of \cite{partone}: (1) there exists a character $\chi \in \irr {C \mid \psi}$ with $\V \chi \sbs L_0$, or (2) there exists $\chi \in \irr {C \mid \psi}$ with $g \chi(1) \in \cd {K_0}$.  It is possible that $\psi \in \irr {L_0}$ is both of type (1) and of type (2).

If $\psi \in \irr {L_0}$ is of type (2), then $g \psi(1)$
divides some character degree of $G$.  Hence, $g \psi(1)$ divides $b$, the unique irreducible character degree of $G$ divisible by $g$. Since $\subdash b\pi = g$, it must be that $\psi(1)$ is a $\pi$-number.

Let $Y = \{ \psi(1) \mid \psi \in \irr {L_0} \mbox{ is of type (2)}\}$ and let $X = \cd {L_0} \setminus Y$.  Now, $Y$ is a set of $\pi$-numbers, and $|\pi| \leq 8$.  Thus, $\scr G (X)$ has at most 11 connected components by Theorem (2.2) of \cite{partone}.  As $1_{L_0}$ is of type (2), we have $1 \in Y$.

Let $x \in X$, and let $\theta \in \irr {L_0}$ with $\theta(1) = x$.  By definition of $X$, we may choose $\chi \in \irr {C \mid \theta}$ with $\V \chi \sbs L_0$.  Now, $|C:L_0|$ divides
$(\chi(1)/\theta(1))^2$.  If $r^3$ divides $|C:L_0|$, then $r^2$ divides $\chi(1)/\theta(1)$. Otherwise, the distinct primes $r$ and $s$ divide $|C:L_0|$, and we have $rs$ divides $\chi(1)/\theta(1)$.  Write $u = r^2$ in the first case, and $u = rs$ in the second case.

For all characters $\theta \in \irr {L_0}$ with $\theta(1) \in X$ and $\chi \in \irr {C \mid \theta}$, we have that $u$ divides $\chi(1)/\theta(1)$.  Let $x,y \in X$ with $d = (x,y) > 1$.  Then all elements of $\cdover Cx{L_0}$ and $\cdover Cy{L_0}$ are divisible by $du$, and $\omega(du) \geq 3$.  Thus, $|\cdover G{ \{ x,y \} }{L_0}| = 1$, and it follows by induction that if $X_0$ is a component of $\scr G(X)$, then $|\cdover G{X_0}{L_0}| = 1$. Since $\scr G(X)$ has at most 11
connected components, $|\cdover GX{L_0}| \leq 11$.

We now consider the size of the set $\cdover GY{L_0}$.  For each positive integer $i$, write $Y^{(=i)} = \{ y \in Y \mid \omega(y) = i \}$, and $Y^{(\geq i)} = \bigcup_{j \geq i} Y^{(=j)}$.

Let $v \in \pi^3$.  By the two-prime hypothesis in $G$, there
is at most one irreducible character degree of $G$ which lies over all degrees in $\cd {L_0}$ which are divisible by $v$. Since $Y$ is a set of $\pi$-numbers, we can use Lemma \ref{2.7} to see that $| \cdover G {Y^{(\geq 3)}} {L_0} | \leq | \pi^3 | \leq 120$.

Fix $y \in Y^{(=2)}$, and let $z \in \cdover {K_0}y{L_0}$. In
the case where $L = L_0$, we have not defined the prime $s$.  In this case, simply let $s = r$.  Since $\pi(K_0:L_0) = \{ r,s\} \cup \pi(g)$, we have that one of the following occurs:
(1) for some prime $p \in \pi(g)$, we have $py$ divides $z$, (2) $ry$ divides $z$, (3) $sy$ divides $z$, or (4) $z = y$.
For $1 \leq i \leq 4$, write $Z_i$ to be the set of $z \in \cdover {K_0}y{L_0}$ with $z$ satisfying condition $(i)$.  Note that in case (1) we have many possible primes $p$, whereas in cases (2) and (3) we refer to the fixed primes $r$ and $s$ (noting that possibly $s = r$).

Since $y \in Y$, we have $ygr^\alpha s^\beta \in \cd {K_0}$ for some $\alpha, \beta \geq 0$.  Hence, if $z \in Z_1$, then
\[
\omega(z, ygr^\alpha s^\beta) \geq \omega(py) = 3.
\]
since $\omega(y) = 2$.  Let $d \in \cdover Gz{K_0}$, and observe that $z$ divides $d$.  Also, as $g$ divides $ygr^\alpha s^\beta$, it must be that $ygr^\alpha s^\beta$ divides $b \in \cd G$.  By the two-prime hypothesis in $G$, we obtain $d=b$.  We conclude that $\cdover G{Z_1}{K_0} = \{ b \}$.

Note that $\omega (ry) = 3$ and that $ry$ divides all irreducible character degrees of $G$ which lie over degrees in $Z_2$.  By the two-prime hypothesis in $G$, we have $|\cdover G{Z_2}{K_0}| = 1$. Similarly, $|\cdover G{Z_3}{K_0}| = 1$.

If $z \in Z_4$, then $z=y$.  Since $|G:K_0|$ is a $\pi$-number, the degrees in $\cdover Gz{K_0}$ have the form $vy$, where $v$ is a $\pi$-number.  Recall that $y \in Y^{(=2)}$, so $y \in \pi^2$.  This yields $\cdover G{Z_4}{K_0} \sbs \pi^{\geq 2}$.  Using the two-prime hypothesis in $G$ and Lemmas \ref{2.7} and \ref{2.8}, we obtain
\[
|\cdover G{Z_4}{K_0}| \leq |\pi^2|+|\pi^3| \leq 36 + 120 = 156.
\]
Therefore,
\[
|\cdover Gy{L_0}| \leq \sum_{i=1}^4 |\cdover G{Z_i}{K_0}| \leq
 1+1+1+156 = 159.
\]
Since $Y$ is a set of $\pi$-numbers, we can use Lemma \ref{2.7} to see that $|Y^{(=2)}| \leq |\pi^2| \le 36$.  We then obtain
$$
|\cdover G{Y^{(=2)}}{L_0}| \leq \sum_{y \in Y^{(=2)}} |\cdover
Gy{L_0}| \leq |Y^{(=2)}| \cdot 159 \leq 36\cdot 159 = 5,\!724.
$$

Write $\scr Y = \{ \psi \in \irr {L_0} \st \psi(1) \in Y^{(=1)} \}$.  Write $\scr Y_0$ for the set of characters $\psi \in \scr Y$ that extend to $C$.  By Lemma \ref{10.5}, $|\cd {G \mid \scr Y_0}| \leq 5,\!697$. Let $\scr Y_1$ be the set of $\psi \in \scr Y$ that do not extend to $C$.  For each $p \in \pi$, write $\scr Y_1(p)$ for the set of $\psi \in \scr Y_1$ with $\psi(1) = p$.

Let $\psi \in \scr Y_1(p)$. Since $\psi$ does not extend to $C$, $rp$ or $sp$ divides every element of $\cd {C \mid \psi}$.  In fact, $rp$ or $sp$ divides all degrees in $\cd {C \mid \scr Y_1(p)}$. Let $\chi \in \irr {C \mid \scr Y_1(p)}$.  In light of Lemma (2.14) of \cite{partone}, either $g\chi(1) \in \cd {K_0}$ or $up$ divides $\chi(1)$. Recall that, depending on the structure of $C/L_0$, either $u = r^2$ or $u = rs$.  In particular, either $g\chi(1) \in \cd {K_0}$ or $\omega(\chi(1)) \geq 3$.

First, assume that $g \chi(1) \in \cd {K_0}$ and that $\omega (\chi(1)) \leq 2$.  Since $\chi(1)/p$ divides $|C:L_0|$, we have either $\chi(1) = rp$ or $\chi(1) = sp$.  For notational convenience, write $\chi(1) = vp$ where $v \in \{ r, s\}$. Then $g v p = g \chi(1) \in \cd {K_0}$.  Every element of $\cdover {K_0}{\chi(1)}C$ has the form $z\chi(1) = zvp$ where $z$ is some divisor of $g$.  We obtain
\[
\omega(gvp, zvp) \geq 2+\omega(z).
\]
By Lemma (3.6) of \cite{partone}, $K_0$ satisfies the $(2,\pi)$-hypothesis.  Since $g$ is $\pi'$-number, it follows that if $z > 1$, then $z = \subdash z\pi = \subdash g\pi = g$. The only possibilities for $z$ are $z=1$ and $z=g$, and thus, $|\cdover {K_0}{\chi(1)}C| \leq 2$. Recall that $\chi(1) = rp$ or $\chi(1) = sp$.  This makes at most
four degrees of $G$ lying over $\scr Y_1(p)$.

Second, assume $\omega(\chi(1)) \geq 3$. Since $\chi(1)/p$ divides $|C:L_0|$, we have $\chi(1)$ is divisible by $pr^2$, $prs$, or $ps^2$.  By the two-prime hypothesis in $G$, each of these possibilities yields at most one character degree of $G$.  Thus, we have at most three degrees of $G$ lying over such characters $\chi$.

In particular, $|\cd {K_0 \mid \scr Y_1(p)}| \leq 4+3=7$.  Since $|\pi| \leq 8$, we determine that
\[
|\cd {K_0 \mid \scr Y_1}| \leq \sum_{p \in \pi} |\cd {K_0 \mid \scr Y_1(p)}| \leq 8 \cdot 7 = 56.
\]
Similarly, as $|G:K_0|$ is a $\pi$-number,
\[
|\subdash {\cd {G \mid \scr Y_1}}\pi| = |\subdash {\cd {K_0 \mid \scr Y_1}}\pi| \leq |\cd {K_0 \mid \scr Y_1}| \leq 56.
\]
By Lemmas \ref{2.7} and \ref{2.8}, we compute
\[
|\cd {G \mid \scr Y_1}| \leq 56 \cdot |\pi^{\leq 2}| + |\pi^3| \leq 56 \cdot 45 + 120 = 2,\!640.
\]
Hence, $|\cdover G{Y^{(=1)}}{L_0}| \leq |\cd {G \mid \scr Y_0}| + |\cd {G \mid \scr Y_1}| \leq 5,\!703 + 2,\!640 = 8,\!343$.

We consider the degrees of $G$ which lie over $Y^{(=0)} = \{ 1 \}$, namely the set $\cd {G/L_0'}$.  We may assume $L_0$ is abelian and work to obtain a bound on $|\cd G|$.  If $L = L_0$, then Theorem \ref{10.3} gives $|\cd G| \leq 221,\!205$.  We now may assume $L_0 < L$. Note that $\pi(C:L_0) = \{ r, s\}$.

Let $\scr A \sbs \irr C$ be the set of characters with degree
divisible by $rs$, and let $\scr B = \irr {K \mid \scr A}$.  For each character $\psi \in \scr B$, we find a character $\chi \in \irr {G \mid \psi}$ with $\chi(1)/\psi(1)$ divisible by $q$ or $f$ by Theorem (12.4) of \cite{text}.  Now, $\chi(1)$ is divisible by $rsf$ or $rsq$, and so, by the two-prime hypothesis in $G$, there are at most two degrees we can construct in this way.  Since $|G:K|$ is a $\pi$-number, $\subdash {\chi(1)}\pi = \subdash {\psi(1)}\pi$, and so, $|\subdash {\cd {K \mid \scr A}}\pi| \leq 2$.  Again, since $|G:K|$ is a $\pi$-number, $\subdash {\cd {K \mid \scr A}}\pi = \subdash {\cd {G \mid \scr A}}\pi$. Hence, $|\subdash {\cd {G \mid \scr A}}\pi| \leq 2$.  Because $|\pi| \leq 8$, we may use Lemmas \ref{2.7} and \ref{2.8} to compute
\[
|\cd {G \mid \scr A}| \leq 2\cdot |\pi^{\leq 2}| + |\pi^{\leq 3}| \leq 2 \cdot 45 + 120 = 210.
\]

Assume $\theta \in \irr C - \scr A$.  Then $rs$ does not divide $\theta(1)$.  Since $L_0$ is abelian and $|C:L_0|$ is an $\{ r,s \}$-number, we may apply It\^o's Theorem to see that $\theta(1)$ is either a power of $r$ or a power of $s$.  If $\theta(1)$ is a power of $r$, then $\theta_L$ is a sum of linear characters and $\irr {G|\theta} \sbs \irr {G/L'}$.  If $\theta(1)$ is a power of $s$, then $\theta_U$ is a sum of linear characters and $\irr {G \mid \theta} \subseteq \irr {G/U'}$. By Theorem \ref{10.3}, $|\cd {G/L'}| \leq 221,\!205$ and $|\cd {G/U'}| \leq 221,\!205$.

Therefore, we have proved $|\cd {G/L_0'}| \leq 210 + 2\cdot 221,\!205 = 442,\!620$. Finally, we compute:
$$
| \cd G | \leq |\cd{G/L_0'}| + | \cdover GX{L_0} | +
  \sum_{i=1}^\infty | \cdover G{Y^{(=i)}}{L_0} |.$$
We conclude that $|\cd G| \leq
  442,\!620 + 11 + 120+8,\!343 + 5,\!724 + 5,\!697 =  462,\!515.$
\end{proof}

\section{The Small Frobenius Case, part 3}

{\bf Hypothesis (``Small Kernel'')}. Assume the $f$ Small Hypothesis from Section \ref{Chapter 9}. Assume $K$ is nonabelian, and write $\pi = \pi(G:K)$.  Assume $K' \sbs \Oh p K$ for all primes $p$.  Let $L \nor G$ with $L \sbs K$ and suppose $L$ is chosen maximal with $K/L$ nonabelian.  Assume that for all such choices of $L$ we have $|\pi(K:L) - \pi| \geq 9$.  Write $C/L = (K/L)'$ and $K/C = K_0/C \times H/C$, where $K_0/C = \oh {\pi'} {K/C}$ and $H/C = \oh {\pi} {K/C}$.  Let $g = |K_0:C|$ and $h = |H:C|$.  Suppose $L$ is chosen so that $gh$ is minimized.  Write $|C:L| = r^e$ and assume $e \leq 2$. Assume that for all primes $s \not= r$, we have $K_0/\Oh s C$
abelian.  Write $B/L = \cent {G/L} {C/L}$.
\medskip

Note that in Section \ref{Chapter 9}, we assumed that it was
possible to choose an $L$ with $|\pi(K:L) - \pi| \leq 8$.  Here we consider the complementary case.  In the next three results, we work to prove that $C$ is nilpotent, and then apply the results of Section 6 of \cite{partone}.

Under these hypotheses, we have that $K_0/L$ is a Frobenius group with kernel $C/L$ by Lemma \ref{10.1}.

%%
%% (11.1) Lemma
%%

\begin{lemma} \label{11.1}
Assume the Small Kernel Hypothesis.  Then either $KB = N$ or $B = C$. If $KB = N$, then $B/L = C/L \times D/L$ and $N/L = D/L \times K/L$ for some $D \nor G$.
\end{lemma}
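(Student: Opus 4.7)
The plan is: first establish $B \cap K = C$, then construct via Fitting's theorem a $G$-invariant complement $D/L$ to $C/L$ in $B/L$, classify $BK \in \{K,N,G\}$, and finally rule out $BK = G$ by a character-degree contradiction while reading off the two alternatives of the lemma.

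First, since $K_0/L$ is a Frobenius group with kernel $C/L$ (Lemma \ref{10.1}), one has $\cent{K_0/L}{C/L} = C/L$ and hence $B \cap K_0 = C$.  Writing $E = B \cap K$ and using that $K/C = K_0/C \times H/C$ is abelian with the two factors of coprime orders ($g$ a $\pi'$-number, $h$ a $\pi$-number), a product element of $K/C$ centralizes $C/L$ iff each factor does, so $E/C = \cent{K/C}{C/L}$.  By Lemma \ref{7.3}, $C/L = (K/L)^\infty$ is the unique minimal characteristic subgroup of $K/L$; hence Lemma \ref{7.2}(b) applies and the abelian quotient $K/C$ acts on $C/L$ with a regular orbit.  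In particular $\cent{K/C}{C/L} = 1$, so $E = C$, and $gh = |K:C| \in \cd{K/L}$.

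Next, I would apply Fitting's theorem to the coprime action of $K_0/C$ on $B/L$.  Coprimality follows because $B \cap K_0 = C$ makes $B/C$ embed in $G/K_0$, which is a $\pi$-group, so $|B/C|$ is a $\pi$-number coprime to $g$; and $\gcd(g,r) = 1$.  Moreover, $[B,K_0] \sbs B \cap K_0 = C$ shows $K_0/C$ acts trivially on $B/C$, and the action on $C/L$ is Frobenius.  Hence $D/L := \cent{B/L}{K_0/C}$ is a $G$-invariant complement to $C/L$ in $B/L$, yielding $B/L = C/L \times D/L$ with $D \nor G$, $|D/L| = |B/C|$, and $D \cap K \sbs (B \cap K) \cap D = C \cap D = L$.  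Separately, $BK \nor G$ and contains $K$, so by the maximality of $K$ (no normal subgroup of $G$ lies strictly between $K$ and $N$, as any such would give an abelian quotient and hence contain $G'K = N$) combined with the fact that $BK/K$ is a normal subgroup of the Frobenius group $G/K$, one has $BK \in \{K,N,G\}$.

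The main obstacle will be ruling out $BK = G$.  Suppose for contradiction that $BK = G$.  Then $|B/C| = |G/K| = fq^e$ gives $|D/L| = fq^e$, and counting indices yields $DK = G$; together with $D \cap K = L$ and $[D,K] \sbs L$ (from $D,K \nor G$), this forces the direct product decomposition $G/L = K/L \times D/L$.  Consequently $\cd{G/L} = \cd{K/L} \cdot \cd{D/L}$; since $D/L \cong G/K$ is a Frobenius group with elementary abelian kernel of order $q^e$ and cyclic complement of order $f$, $\cd{D/L} = \{1,f\}$, and with $gh \in \cd{K/L}$ this gives $gh, fgh \in \cd G$.  But $\omega(\gcd(gh,fgh)) = \omega(gh) \sps \omega(g) \sps |\pi(g)| \sps 8$, so the two-prime hypothesis forces $gh = fgh$, hence $f = 1$, contradicting $|G:N| = f \sps 2$.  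In the remaining cases, if $BK = K$ then $B \sbs K$ forces $B = E = C$; if $BK = N$ then $|D/L| = |B/C| = q^e = |N/K|$ gives $DK = N$, and $D \cap K = L$ with $[D,K] \sbs L$ yields $N/L = D/L \times K/L$, completing both conclusions.
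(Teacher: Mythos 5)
Your argument gets $B\cap K = C$ and the decomposition $B/L = C/L \times D/L$ in essentially the paper's way, with one small imprecision: invoking ``Fitting's theorem'' presupposes $B/L$ is abelian, which is not known. The paper instead uses the coprime-action factorization $B/L = \cent{B/L}{K_0/C}\cdot[B/L,K_0/C]$, identifies $[B/L,K_0/C] = C/L$, and then gets directness from $C/L \sbs \zent{B/L}$ together with the fact that the Frobenius action of $K_0/C$ on $C/L$ forces $C/L \cap \cent{B/L}{K_0/C} = 1$. Your conclusion is right but should be justified this way.

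The genuine gap is the claim that $BK \in \{K,N,G\}$. Under the $f$ Small Hypothesis, $G/N$ is cyclic of order $f$ (Lemma 12.3 of \cite{text}), and $f$ need not be prime; every subgroup of the cyclic group $G/N$ pulls back to a normal subgroup of $G$ strictly between $N$ and $G$, so there can be many candidates for $BK$ above $N$. Your observation that nothing normal in $G$ lies strictly between $K$ and $N$ does correctly yield $N\sbs BK$ once $BK>K$, but it gives no control above $N$, and a Frobenius group can certainly have normal subgroups properly containing its kernel. Your contradiction for $BK=G$ rests on $G/L = K/L \times D/L$; when $N < BK < G$ you only obtain $BK/L = K/L\times D/L$, so $gh$ and $gh\cdot|BK:N|$ lie in $\cd{BK}$ rather than $\cd G$, and the two-prime hypothesis cannot be applied directly. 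The paper avoids the issue by never classifying $BK$: once $N\sbs KB$, it works with the unique degree $b\in\cd G$ divisible by $g$ (unique since $|\pi(g)|\ge 8$), notes $b\mid gh\,|G:KB|$ from $gh\in\cd{KB}$, then uses $gh\in\cd K$ and the Frobenius structure of $G/K$ to produce $d\in\cd G$ over $gh$ with $qgh\mid d$ or $fgh\mid d$ and $d=b$; since $q\nmid|G:KB|$, it must be that $fgh\mid b$, which together with $|G:KB|\mid f$ forces $|G:KB|=f$, i.e.\ $KB=N$. Some argument of this kind is needed to rule out the intermediate possibilities that your case analysis overlooks.
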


\begin{proof}
Clearly either $KB > K$ or $B \sbs K$.  Under the Small Kernel
Hypothesis, we have that $C/L = (K/L)^\infty$ is the unique minimal characteristic subgroup of $K/L$ by Lemma \ref{7.3}. Hence, by Lemma \ref {7.2} (b), $K/C$ has a regular orbit on $C/L$. In particular, $K/C$ acts faithfully on $C/L$, and so $B \cap K = C$. Thus, if $B \sbs K$, then $B = C$ as desired.

Assume $KB > K$.  Since $KB \nor G$ and $N/K$ is the unique minimal normal subgroup of $G/K$, we have $N \sbs KB$.

Observe that $C/L \sbs \zent {B/L}$, and so $K_0/C$ acts on $B/L$.  Note $|B:C| = |KB:K|$ is a $\pi$-number.  Also, $|C:L|$ is coprime to $|K_0:C|$ and so the action is coprime.  Since $G$ is solvable, $B/L = \cent {B/L} {K_0/C} \cdot [B/L, K_0/C]$. We have that $K_0/C$ centralizes $B/C$ since $B \cap K_0 = C$, so $[B/L, K_0/C] \sbs C/L$.  Now, $K_0/C$ acts Frobeniusly on $C/L$, so $[B/L, K_0/C] = C/L$. Write $D/L = \cent {B/L} {K_0/C}$.  Because $K_0/C$ acts Frobeniusly on $C/L$, it follows that $(C/L) \cap (D/L) = 1$.  Thus, $B/L = C/L \times D/L$.

Note that $KB/D \cong K/L$, and so, $gh \in \cd {KB}$.  There is a unique character degree $b \in \cd G$ divisible by $g$, by the two-prime hypothesis.  Observe that $b$ must lie above $gh \in \cd {KB}$, and so, $b$ divides $gh |G:KB|$.  Since $gh \in \cd K$, there exists a degree $d \in \cd G$ lying over $gh \in \cd K$ with $qgh$ dividing $d$ or $fgh$ dividing $d$ by Theorem (12.4) of \cite{text}.   As $g$ divides $d$, we obtain $d=b$.  Hence, $qgh$ divides $b$ or $fgh$ divides $b$.

Now, $q$ does not divide $|G:KB|$ since $KB \sps N$.  It must be that $fgh$ divides $b$.  Since also $b$ divides $gh|G:KB|$ and $|G:KB|$ divides $f$, it follows that $|G:KB| = f$.  Thus, $KB = N$, as desired.

Finally, $N = KB = K(CD) = KD$.  We see that
\[
K \cap D \sbs K \cap B \cap D = C \cap D = L,
\]
and so $N/L = K/L \times D/L$.
\end{proof}

In the proof of the next theorem, we will use some modular character theory.  Let $r$ be a prime.  We use $G^0$ to denote the set of $r$-regular elements of $G$, that is, elements whose order is not divisible by $r$.  If $\chi \in \cha G$, write $\chi^0$ for the restriction of $\chi$ to $G^0$ and note that this is a Brauer character.  We write $\ibr G$ to denote the set of irreducible Brauer characters.

%%
%% (11.2) Theorem
%%

\begin{theorem} \label{11.2}
Assume the Small Kernel Hypothesis.  Assume $B = C$.  Then $C$ is nilpotent.
\end{theorem}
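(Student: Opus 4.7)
The plan is to prove $C$ is nilpotent by exploiting the Small Kernel Hypothesis via Lemma~\ref{10.2} to pin down the commutator $[L,C]$, and then using $r$-modular character theory together with the hypothesis $B = C$ to remove the residual obstruction to nilpotency.

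First, for each prime $s \ne r$, the Small Kernel Hypothesis gives that $K_0/\Oh s C$ is abelian, so (since $C \sbs K_0$) the quotient $C/\Oh s C$ is an abelian $s$-group. Set $M_s = \Oh s C \cap L$; then $M_s \nor G$ because $\Oh s C$ is characteristic in $C \nor G$, and $L/M_s$ embeds in the abelian $s$-group $C/\Oh s C$. Lemma~\ref{10.2} then yields $L/M_s \sbs \zent{C/M_s}$, i.e.\ $[L,C] \sbs \Oh s C$. Intersecting over all $s \ne r$ and writing $T = \bigcap_{s \ne r} \Oh s C$, we get $[L,C] \sbs T$, while $C/T$ embeds into a product of $s$-groups with $s \ne r$ and is therefore an $r'$-group. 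Since $C/L$ is an $r$-group, a routine order comparison forces $LT = C$ and gives the direct product decomposition $C/(L \cap T) = L/(L \cap T) \times T/(L \cap T)$; the first factor is central in $C/(L \cap T)$ by what we just showed, and the second is an $r$-group, so $C/(L \cap T)$ is nilpotent.

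To finish, I need to upgrade nilpotency of $C/(L \cap T)$ to nilpotency of $C$, i.e.\ to show that $L \cap T$ is centralized by $C$; this is where the modular character theory at the prime $r$ enters. The hypothesis $B = C$ makes $C/L$ a $G$-chief factor on which $G/C$ acts faithfully, and since $C/L$ has rank at most $2$ this pins $G/C$ inside $\mathrm{GL}_e(r)$ with $e \le 2$. I would then argue by contradiction: assuming some $r$-Brauer character $\phi \in \ibr{L \cap T}$ is not $C$-invariant, use a Fong--Swan lift of $\phi$ to an ordinary character of a suitable overgroup, and exploit the Frobenius structure of $K_0/L$ from Lemma~\ref{10.1} together with the abelianness of $K_0/\Oh s C$ to induce upward through $K_0$ and $G$. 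The resulting ordinary degrees of $G$ would share at least three common primes in their $r'$-parts while differing in their $r$-parts, contradicting the two-prime hypothesis. The main obstacle is exactly this Brauer-character bookkeeping: carefully tracking the extension behavior through the tower $L \cap T \sbs L \sbs C \sbs K_0 \sbs G$ and using the constrained structure of $G/C$ as a faithful irreducible subgroup of $\mathrm{GL}_e(r)$ to ensure that the induced ordinary degrees pick up enough primes in common to trigger the two-prime hypothesis.
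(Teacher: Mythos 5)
Your first paragraph is a correct observation: with $T = \bigcap_{s\neq r}\Oh s C$, repeated application of Lemma~\ref{10.2} to $M_s = \Oh s C\cap L$ does yield $[L,C]\sbs L\cap T$, and the order argument together with $[L,T]\sbs L\cap T$ gives the internal direct product decomposition and the nilpotency of $C/(L\cap T)$. (Note that $b\in\cd G$ divisible by $g$ exists automatically here, since $gh\in\cd K$ by Lemma~\ref{7.2}(b), so Lemma~\ref{10.2} is indeed available under the Small Kernel Hypothesis.)

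The second paragraph is where the argument breaks down, and it is not merely a matter of filling in ``bookkeeping.'' First, the reduction target is off: even if every $r$-Brauer character of $L\cap T$ were $C$-invariant, that would not force $[L\cap T,C]=1$ --- invariance of (Brauer) characters controls the permutation action of $C$ on classes, not pointwise centralization. Second, the proposed use of Fong--Swan is misdirected: Fong--Swan lifts an irreducible Brauer character of a solvable group to an ordinary character of the \emph{same} group, not an overgroup; it is unclear what character of $G$ you intend to produce from $\phi\in\ibr{L\cap T}$, and thus unclear how the two-prime hypothesis in $G$ would be triggered. Most importantly, the proposal never identifies what the $B=C$ hypothesis is actually \emph{for}. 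In the paper, $B=C$ is used only to show $r\nmid |G:C|$: one embeds $G/C$ faithfully in ${\rm GL}(2,r)$ (here $|C:L|=r^2$ is forced because $G/C$ is nonabelian), observes the resulting $2$-dimensional modular representation is absolutely irreducible, applies Fong--Swan to lift its Brauer character to a faithful ordinary $\chi\in\irr{G/C}$ of degree $2$, and then runs the primitive/imprimitive dichotomy (Theorem~14.23 of \cite{text} in the primitive case, an index-$2$ abelian normal subgroup in the imprimitive case) to conclude $r\nmid |G:C|$. That coprimality is hypothesis~(7) of Theorem~\ref{7.7}; the remaining hypotheses are verified from the Small Kernel Hypothesis and Lemma~\ref{7.4}(c), and Theorem~\ref{7.7} then delivers the nilpotency of $C$ in one stroke. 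Your proof never invokes Theorem~\ref{7.7} and never aims at $r\nmid|G:C|$, so the route you sketch is both incomplete and aimed at the wrong intermediate goal.
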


\begin{proof}
First note that $G/C$ is isomorphic to a subgroup of $\aut {C/L}$.  Because $G/C$ is nonabelian, $\aut {C/L}$ is nonabelian.  In particular, $C/L$ is not cyclic.  We assumed $|C:L| \leq r^2$, so $|C:L| = r^2$.  Consider $\oh r{G/C}$. Since $G/C$ acts faithfully on the $r$-group $C/L$, we obtain $\oh r{G/C} = 1$.

Now, $G/C$ is isomorphic to a subgroup of ${\rm GL}(2,r)$.  This isomorphism is a faithful 2-dimensional representation $\scr X$ of $G/C$ in characteristic $r$.  Write ${\bf F}_r$ for the field with $r$ elements.  We want to show that $\scr X$ is absolutely irreducible.  Assume that over $\overline{{\bf F}_r}$, the algebraic closure of ${\bf F}_r$, that $\scr X$ is reducible.  Then since $\scr X$ is a two-dimensional representation, over $\overline{{\bf F}_r}$ we have that $\scr X(g)$ is a diagonal matrix for all $g \in {G/C}$.  However, $\scr X$ is faithful and $G/C$ is nonabelian, a contradiction.  Therefore, $\scr X$ is absolutely irreducible.

Let $\phi \in \ibr {G/C}$ be afforded by $\scr X$. Note that $\phi$ is faithful of degree 2.  Since $G/C$ is solvable, we may apply the Fong-Swan Theorem, (10.1) from \cite{navarro}.  In light of the Fong-Swan Theorem, there exists a character $\chi \in \irr G$ with $\chi^0 = \phi$.  Let $x \in \ker \chi$.  If $x$ is an $r$-regular element, then $\phi(x) = \chi(x) = \chi(1) = \phi(1)$.  However, since $\phi$ is faithful, it
follows that the only $r$-regular element in $\ker (\chi)$ is $1$.  Hence, all the nonidentity elements of $\ker (\chi)$ have order divisible by $r$. It follows that $\ker (\chi)$ is an $r$-group, and so ,$\ker (\chi) \sbs \oh r{G/C} = 1$. We conclude that $\chi$ is faithful. We now work to prove that $r$ does not divide $|G/C|$.

First, assume $\chi$ is primitive.  Then $G$ is a linear group, and Theorem (14.23) of \cite{text} implies that $|(G/C): \zent{G/C}| \in \{ 12, 24, 60 \}$.  Write $Z/C = \zent {G/C}$.  Now, $g$ divides $r^2 - 1$, and $|\pi (g)| \geq 8$.
Clearly, $r$ is large enough so that $r$ does not divide $|G:Z|$.  Hence, $|G/C|_r$ divides $|Z/C|$.  In particular, $G/C$ has a normal Sylow $r$-subgroup and since $\oh r{G/C} = 1$, we determine that $r$ does not divide $|G/C|$.

Second, assume $\chi$ is imprimitive.  Then there is a subgroup $M < G$ and character $\theta \in \irr M$, so that $\theta^G = \chi$.  Since $\chi(1) = 2$, it must be that $\theta$ is linear and $|G:M| = 2$.  Hence, $M \nor G$, and also, $M' \sbs \ker (\chi) \sbs C$. We deduce that $M/C$ is abelian.  As before, $r$ is a large prime, so certainly $r > 2$, and $|G/C|_r$ divides $|M/C|$. In particular, $G/C$ has a normal Sylow
$r$-subgroup, and we conclude that $r$ does not divide $|G/C|$.

We want to apply Theorem \ref{7.7} with the notation $\Gamma =
G$, $G = K_0$, $N = C$, and $K = L$.  Clearly, $G$ satisfies the two-prime hypothesis. Hypotheses (2) - (5) are satisfied
since they appear in the Small Kernel Hypothesis.  We have just proved that $r$ does not divide $|G:C|$, so (7) follows.

For (6), we want to prove for every $S \nor G$ with $C/S$ an abelian $r$-group and $\cent {C/S}{K_0/S} = N/S$ that the
action of $K_0/C$ on $C/S$ is Frobenius.  Let us check the
hypotheses of Lemma \ref{7.4} (c) applied to the group $K_0$. Assume for some prime $p$ that we have $K_0' \not\sbs \Oh p{K_0}$. Then $K/\Oh p{K_0}$ is nonabelian, and we may choose $M \nor G$ with $\Oh p{K_0} \sbs M \sbs K$ such that $M$ is maximal with $K/M$ nonabelian.  In this case, $|\pi(K:M) \setminus \pi| \leq 1$ which contradicts our hypothesis.

We claim that $L$ is maximal among normal subgroups of $G$ with $K_0/L$ nonabelian.  Suppose $L < M \sbs K_0$ where $M \nor G$. By the choice of $L$, we have that $K/M$ is abelian, and hence
$K_0/M$ is abelian, as desired.

We want to show that $L$ is chosen in $K_0$ such that $g$ is
minimized.  Suppose $M \nor G$ with $M \sbs K_0$ and $M$ chosen maximal with $K_0/M$ nonabelian, and assume $|(K_0/M):(K_0/M)'| < g$.  Let $\tilde M \nor G$ with $M \sbs \tilde M \sbs K$ and suppose $\tilde M$ is maximal with $K/\tilde M$ nonabelian.  Write $E/\tilde M = (K/\tilde M)'$. Now $K_0E /E \cong K_0 / (K_0 \cap E)$ is abelian, and so $(K_0/M)' \sbs E/M$.  We have
\[
|K:E| \leq |K:K_0| \cdot |K_0:K_0 \cap E| \leq |K:K_0|\cdot
|(K_0/M):(K_0/M)'| < gh.
\]
This contradicts the assumption that $L$ was chosen with $gh$
minimized.

Finally, note that by Lemma \ref{10.1}, $K_0/L$ is a
Frobenius group, and so $K_0/C$ is cyclic.  Hence, Lemma \ref{7.4} (c) applies to $K_0/L$, which gives condition (6) of Theorem \ref{7.7}.  Thus, Theorem \ref{7.7} applies and $C$ is nilpotent, as desired.
\end{proof}

%%
%% (11.3) Theorem
%%

We now consider the remaining case and show that $C$ is nilpotent.  It happens that it is easier to prove that $B$ is nilpotent, and since $C \sbs B$ the result will follow.

\begin{theorem} \label{11.3}
Assume the Small Kernel Hypothesis.  Then $C$ is nilpotent.
\end{theorem}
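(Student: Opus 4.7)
The proof splits via Lemma \ref{11.1}: if $B = C$, Theorem \ref{11.2} yields the result directly, so I focus on the case $KB = N$. In that case, Lemma \ref{11.1} supplies $D \nor G$ with $D/L$ an elementary abelian $q$-group (where $q$ is the prime for the Frobenius kernel of $G/K$) together with the direct-product decompositions $B/L = C/L \times D/L$ and $N/L = D/L \times K/L$.

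My plan is to apply Theorem \ref{7.7} with the identifications $(\Gamma, G, N, K) = (G, K_0 D, CD, D)$, where the prime $q$ of Theorem \ref{7.7} is played by our $r$. By Lemma \ref{10.1}, $K_0 D / D \cong K_0/L$ is Frobenius with kernel $CD/D \cong C/L$; and $K_0 \cap CD = C$ gives $K_0 D / CD \cong K_0/C$, cyclic of order $g$ with $|\pi(g)| \ge 8$. Together with $|CD : D| = |C : L| \le r^2$, this supplies hypotheses (2)--(4). The key commutator identity $(K_0 D)' \sbs C$ follows from $K_0 D / L = K_0/L \times D/L$ (which forces $[K_0, D] \sbs L$) combined with $(K_0/L)' = C/L$.

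The crucial step is hypothesis (7), namely $r \nmid |G : K_0 D|$. Since $B \sbs K_0 D$ with $|K_0 D : B| = g$ and $\gcd(r, g) = 1$ (because $K_0/C$ is a Frobenius complement on the $r$-group $C/L$), it suffices to show $r \nmid |G/B|$. Now $B/L = \cent{G/L}{C/L}$, so $G/B$ embeds faithfully in $\aut{C/L}$. If $|C/L| = r$, then $G/B$ is cyclic of order dividing $r - 1$, so $r \nmid |G/B|$. If $|C/L| = r^2$, I would imitate the Fong--Swan argument of Theorem \ref{11.2}: $G/B$ embeds in ${\rm GL}(2, r)$ with $\oh{r}{G/B} = 1$, the faithful $2$-dimensional representation over ${\bf F}_r$ is absolutely irreducible (since $G/B$ must then be nonabelian), and Fong--Swan produces a faithful $\chi \in \irr{G/B}$ of degree $2$. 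The primitive case (governed by Theorem (14.23) of \cite{text}, giving $|(G/B) : \zent{G/B}| \in \{ 12, 24, 60 \}$) and the imprimitive case (forcing an index-$2$ normal subgroup) both conclude $r \nmid |G/B|$, as $r$ is forced to be large by $|\pi(g)| \ge 8$ and $g \mid r^2 - 1$.

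Hypotheses (5) and (6) would be verified from the Small Kernel Hypothesis and the structure of Lemma \ref{11.1}, via Lemma \ref{7.4}(c) applied to $K_0 D$. The main technical obstacle I anticipate is hypothesis (5), requiring $(K_0 D)/\Oh{p}{CD}$ abelian for $p \neq r$: the Small Kernel Hypothesis gives the analogous statement for $K_0/\Oh{p}{C}$, and bridging this gap will rely on $(K_0 D)' \sbs C$ together with a careful analysis of how $\Oh{p}{C}$ relates to $\Oh{p}{CD}$ using $[K_0, D] \sbs L$ and the $q$-power structure of $D/L$. With all the hypotheses of Theorem \ref{7.7} in place, one concludes that $CD$ is nilpotent, and since $C \sbs CD$, the group $C$ is nilpotent, as desired.
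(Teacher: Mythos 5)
Your setup is effectively identical to the paper's.  Since $B = CD$ and $K_0 B = K_0 D$, your identifications $(\Gamma, G, N, K) = (G, K_0 D, CD, D)$ are literally the paper's $(G, K_0 B, B, D)$, and the direct-product decomposition $K_0 D/L = K_0/L \times D/L$ with $[K_0,D] \sbs L$ does follow from the construction $D/L = \cent{B/L}{K_0/C}$ in Lemma~\ref{11.1}.

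Where you diverge is hypothesis (7) of Theorem~\ref{7.7}, and the divergence is exactly what opens a gap.  The paper first proves $b = fgh = |G:B|$ via Theorem 12.4 of \cite{text} and the uniqueness of the degree divisible by $g$, and then shows that every nonprincipal $\lambda \in \irr{B/D}$ has $I_G(\lambda) = B$; this establishes that $G/D$ is a Frobenius group with kernel $B/D$, which both gives (7) and, more importantly, is reused as a tool.  Your replacement route --- embed $G/B$ faithfully in $\aut{C/L}$, deduce $\oh r{G/B}=1$ from $C/L$ being a $G$-chief factor, and run the Fong--Swan/$\mathrm{GL}(2,r)$ analysis of Theorem~\ref{11.2} --- does plausibly yield $r \nmid |G/B|$ (modulo the unsupported parenthetical that $G/B$ ``must then be nonabelian,'' though the abelian case is trivial once $\oh r{G/B}=1$).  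However, it yields only the divisibility, not the Frobenius action of $G/N$ on $B/D$.

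That lost information is precisely what you need for hypothesis (5), which you flag as ``the main technical obstacle'' but do not resolve.  The hard sub-case is $p = q$ with $q \neq r$: one must prove $B/\Oh q B$ is abelian.  Since $\Oh q B \cdot L = C$ but $\Oh q B$ need not contain $L$ (and $B' \sbs L$ is the only control on $B'$ you have), this is not a formal consequence of $B/L$ being abelian, and your proposed ingredients ($(K_0 D)' \sbs C$, commutator bookkeeping, and the $q$-power structure of $D/L$) do not reach it.  The paper closes this case by applying Theorem 12.4 of \cite{text} to the Frobenius group $G/D$ acting on $B/D$ and invoking $b = fgh = |G:B|$ to force the relevant degree to be $1$; neither of these facts is available on your route to (7).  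As written, the application of Theorem~\ref{7.7} therefore does not go through.
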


\begin{proof}
By Lemma \ref{11.1} and Theorem \ref{11.2}, we may assume $KB = N$.  Let $D$ be as in the statement of Lemma \ref{11.1}. We want to apply Theorem \ref{7.7} with the notation $\Gamma = G$, $G = K_0 B$, $N = B$, and $K = D$.

By Lemma \ref{11.1}, we have $N/L = D/L \times K/L$.  Now, $K_0 B = D/L \times K_0/L$, and so $K_0 B / D \cong K_0/L$ is a Frobenius group by Lemma \ref{10.1}.  Hence, conditions (1)-(4) of Theorem \ref{7.7} are satisfied.

Write $b$ for the unique character degree of $G$ divisible by $g$, where uniqueness follows by the two-prime hypothesis.  Now, $gh \in \cd {K/L}$, and since $K/L \cong N/D$, we have $gh \in \cd N$. Hence, $b$ divides $gh|G:N| = fgh$.  Since $gh \in \cd K$, there exists $d \in \cdover G{gh}K$ with $qgh$ dividing $d$ or $fgh$ dividing $d$ by Theorem 12.4 of \cite{text}.  Since $g$ divides $d$, it follows that $d = b$ and $qgh$ divides $b$ or $fgh$ divides $b$.  We must have $b = fgh$.

Let $1 \not= \lambda \in \irr {B/D}$.  Write $T = I_G(\lambda)$.  Now, $T \cap K_0 B = B$ since $K_0 B/D$ is a Frobenius group.  Hence, $N \cap T \nor T$ is an $r'$-group and $T/(T\cap N)$ is cyclic.  Thus, $\lambda$ extends to $T$.  We obtain $|G:T| \in \cd G$, and $g$ divides $|G:T|$.  We deduce that $|G:T| = b = fgh = |G:B|$.  We see that $T = B$ and
$G/B$ acts Frobeniusly on $B/D$.  In particular, $r$ does not divide $|G:B|$, which gives (7).

We now work to prove condition (5), namely that $K_0 B / \Oh p B$ is abelian for all primes $p \not= r$.  First, let $p \notin \{ q,r \}$ be prime.  Write $U = \Oh p B$ and $V = U \cap C$.  Since $|B:U|$ and $|B:C|$ are coprime, $UC = B$.  Hence, $K_0 B/U \cong K_0/V$.  We obtain $\Oh p C \sbs V$, and since
$p \not= r$, we have $K_0/V$ is abelian by assumption.  Thus $K_0 B/U$ is abelian, as desired.

Assume $q \not= r$, and write $U = \Oh q B$.  Since $B/C \cong
N/K$ is a $q$-group, $U \sbs C$.  In particular, $\Oh q C
\sbs U$.  We next want to show that $B/U$ is abelian.  Let $x \in \cd {B/U}$, and let $\psi \in \irr {B/U}$ with $\psi(1) = x$.  Now, $G/D$ is a Frobenius group with kernel $B/D$, and by Theorem 12.4 of \cite{text}, we have either $\V \psi \sbs D$ or $|G:B|x \in \cd G$.  As $x$ is a power of $q$ and $|B:D|$ is a power of $r$, we cannot have $\V \psi \sbs D$, and $|G:B|x \in \cd G$.  Now, $|G:B| = fgh$, so $g$ divides $|G:B|x$.  It follows that $|G:B|x = b = fgh$, which implies $x=1$, as desired.

By assumption, since $\Oh q C \sbs U$ and $q\not= r$, we have
$K_0/U$ abelian.  Now, $C/U$ is a $q$-group and $q$ does not divide $g$.  Write $K_0/U = C/U \times E/U$, where $|E/U| = g$.  It follows that $K_0 B/U = B/U \times E/U$, and so $K_0 B/U$ is abelian, as desired. This proves (5).

For (6), assume $S \nor G$ such that $B/S$ is an abelian $r$-group, and suppose that $\cent {B/S} {K_0 B / B} = 1$.  We want to show that the action of $K_0 B/B$ on $B/S$ is Frobenius.  First, assume $S \sbs C$.  Then $[B/S, K_0 B / B] \sbs C/S$ since $K_0 B / C$ is abelian. But now $B/S = \cent {B/S} {K_0 B / B} \cdot [B/S, K_0 B / B] \sbs C/S$, which is a contradiction.

Thus, we may assume $S \not\sbs C$.  Now, $SC > C$, and since $B/C \cong N/K$ is a $G$-chief factor, we must have $SC = B$.  It follows that $K_0 B / S \cong K_0 / S \cap C$.  We see that $C/(S\cap C)$ is an abelian $r$-group, and the action of $K_0/C$ on $C/(S\cap C)$ is permutation isomorphic to the action of $K_0 B/B$ on $B/S$. As in the proof of Theorem \ref{11.2}, Lemma \ref{7.4} (c) applies to the group $K_0/L$.  Hence, $K_0/C$ acts Frobeniusly on $C/(S \cap C)$, and so $K_0 B/B$ acts Frobeniusly on $B/S$.

Therefore, Theorem \ref {7.7} applies, and implies that $B$ is nilpotent.  Since $C \sbs B$, we have that $C$ is nilpotent.
\end{proof}

%%
%% (11.4) Theorem
%%
We now obtain our bound on $|\cd G|$ under the Small Kernel Hypothesis.

\begin{theorem} \label{11.4}
Assume the Small Kernel Hypothesis. Then $|\cd G| \leq 221,\!205$.
\end{theorem}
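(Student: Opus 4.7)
The plan is to leverage Theorem \ref{11.3} and reduce immediately to Theorem \ref{10.3}. The hard analytic work has already been done: Theorem \ref{11.3} establishes that $C$ is nilpotent under the Small Kernel Hypothesis. What remains is a short observation and an application of an earlier bound.

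First I would observe that since $C/L = (K/L)'$, we have $L \sbs C$. Because subgroups of nilpotent groups are nilpotent, the fact that $C$ is nilpotent by Theorem \ref{11.3} immediately gives that $L$ is nilpotent as well.

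Next I would check that the Small Kernel Hypothesis subsumes the hypotheses of Theorem \ref{10.3}. The Small Kernel Hypothesis explicitly incorporates the entire $K$ Nonabelian Hypothesis (the same choice of $K$, $L$, $C$, $K_0$, $H$, the same decomposition, the same minimality of $gh$, and the same extraction $|C:L| = r^e$); it merely adds the extra assumptions $e \leq 2$, abelianness of $K_0/\Oh s C$ for $s\ne r$, and the definition of $B/L$. In particular every hypothesis needed to apply Theorem \ref{10.3} is in force, and the existence of a degree $b \in \cd G$ divisible by $g$ is the same standing assumption that drove the arguments of Section 10.

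Applying Theorem \ref{10.3} with $L$ nilpotent then yields $|\cd G| \leq 221{,}205$, which is precisely the desired bound. The proof should be just a few lines, since there is no genuine new obstacle — the real difficulty was proving that $C$ is nilpotent, and that is already accomplished in Theorems \ref{11.2} and \ref{11.3}.
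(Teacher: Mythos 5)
Your proposal is correct, and it arrives at the bound by essentially the same underlying mechanism as the paper: Theorem \ref{11.3} does all the heavy lifting (getting $C$ nilpotent), and then a known bound is applied. The one place you slide a bit is in asserting that the Small Kernel Hypothesis ``explicitly incorporates the entire $K$ Nonabelian Hypothesis.'' It does not quite: the $K$ Nonabelian Hypothesis carries the extra clause ``there exists $b \in \cd G$ divisible by $g$,'' which the Small Kernel Hypothesis does not list. This is harmless, because that clause is automatic --- Lemmas \ref{7.3} and \ref{7.2}(b) show $gh \in \cd K$, and any $\chi \in \irr G$ lying over a degree-$gh$ character of $K$ has $g \mid \chi(1)$ --- but your write-up should make that explicit rather than calling it a ``standing assumption,'' since otherwise the citation of Theorem \ref{10.3} has unfulfilled hypotheses on its face.

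As for the comparison: the paper does not cite Theorem \ref{10.3} at all. Instead it short-circuits: with $C$ already nilpotent from Theorem \ref{11.3}, it immediately notes $|G:K_0|$ is a $\pi$-number, invokes Lemma (3.6) and Theorem (6.3) of \cite{partone} applied to $K_0$ (whose hypotheses are met because $K_0/L$ is Frobenius with nilpotent kernel $C$), obtains $|\subdash {\cd {K_0}}\pi| \le 4913$, and finishes with Lemmas \ref{2.7} and \ref{2.8}. Your route passes through the detour $C$ nilpotent $\Rightarrow L$ nilpotent $\Rightarrow$ (inside Theorem \ref{10.3}) $C$ nilpotent again $\Rightarrow$ the same Theorem (6.3) application. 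So your argument is correct and slightly shorter to state, at the cost of a small logical redundancy and of needing the explicit remark above to match the hypotheses of Theorem \ref{10.3}; the paper's version is more direct and avoids re-deriving a fact it already has.
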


\begin{proof}
By Theorem \ref{11.3}, $C$ is nilpotent.  It follows that
$|G:K_0|$ is a $\pi$-number, and so by Lemma (3.6) of \cite{partone}, $K_0$ satisfies the $(2,\pi)$-hypothesis.  Since $K_0/L$ is a Frobenius group with kernel $C/L$, we apply Theorem (6.3) of \cite{partone} to see that $|\subdash {\cd {K_0}}\pi| \leq (1 + 2 ^{2 \cdot 2})^3 = 17^3 = 4,\!913$.  Since $|G:K_0|$ is a $\pi$-number, $\subdash {\cd G}\pi = \subdash {\cd {K_0}}\pi$.  Hence, Lemmas \ref{2.7} and \ref{2.8} yield
\[
|\cd G| \leq 4,\!913 \cdot |\pi^{\leq 2}| + |\pi^3| \leq 4,\!913 \cdot 45 + 120 = 221,\!085 + 120 = 221,\!205.
\]
\end{proof}

Finally, we are ready to prove Theorem A.
%, which we restate here for clarity.

%%
%% Proof of Theorem A
%%

%\begin{thmA}
%Let $G$ be a solvable group which satisfies the two-prime
%hypothesis. Then $|\cd G| \leq 388,\!247$.
%\end{thmA}

\begin{proof}[Proof of Theorem A]
If $G$ is abelian, then $|\cd G| = 1$.  Assume $G$ is nonabelian and let $K \nor G$ be maximal with $G/K$ nonabelian. By the Two-Case Theorem, either $G/K$ is a $p$-group, or $G/K$ is a Frobenius group with kernel $N/K$.  If $G/K$ is a $p$-group, then by Theorem~B, $|\cd G| \leq 88$.  Hence, we may assume $G/K$ is a Frobenius group and $\Oh p G \sbs G'$ for all primes $p$.  We may also assume that we have chosen $K$ such that $f=|G:N|$ is minimized.

If $|\pi(f)| \geq 8$, then we have the $f$ Large Hypothesis and Theorem \ref{8.3} implies that $|\cd G| \leq 4,\!913$.  Thus, we assume $|\pi (f)| \leq 7$, and we are in the case of the $f$ Small Hypothesis. If $K$ is abelian, then by Theorem \ref{9.2} we have $|\cd G| \leq 208$.  Hence, we may assume $K$ is nonabelian.  Let $L \nor G$ with $L \sbs K$ and suppose $L$ is maximal with $K/L$ nonabelian. Assume further that $L$ is chosen so that $|K/L : (K/L)'|$ is minimized.

If it is possible to choose $L$ such that $|\pi(K:L) \setminus \pi| \leq 8$, then Theorem \ref{9.3} implies that $|\cd G| \leq 27,\!540$.  Hence, we assume that $|\pi(K:L) \setminus \pi| \geq 9$ and that this inequality holds for all choices of $L$.

Write $C/L = (K/L)'$ and $N/C = K_0/C \times H/C$ where $K_0/C$ is a $\pi'$-group and $H/C$ is a $\pi$-group.  If either
$\mbox{rank}(C/L) \geq 3$ or there is some prime $s$ not dividing $|C:L|$ with $K_0/\Oh s C$ nonabelian, then we have the $K$ Nonabelian Hypothesis. By Theorem \ref{10.6}, we obtain $|\cd G| \leq 462,\!515$. Otherwise, we have the Small Kernel Hypothesis, and by Theorem \ref{11.4}, we conclude that $|\cd G| \leq 221,\!205$.

This exhausts all cases and completes the proof.
\end{proof}


\begin{thebibliography}{99}

\bibitem{benj} D.~Benjamin, Coprimeness among irreducible character degrees of finite solvable groups, {\it Proc. AMS} {\bf 125} (1997), 2831-2837.
\bibitem{partone} J.~Hamblin, Solvable groups satisfying the two-prime hypothesis. I, {\it Algebr. Represent. Theory}
    {\bf 10} (2007), 1-24.
\bibitem{huptext} B. Huppert, ``Character Theory of Finite
                 Groups,'' DeGruyter Expositions in Mathematics, Berlin, 1998.
\bibitem{text}   I.~M.~Isaacs, ``Character Theory of Finite
                 Groups,'' Academic Press, San Diego, California, 1976.
\bibitem{ispa}   I.~M.~Isaacs and D.~S.~Passman, Half-transitive automorphism groups, {\it Canad. J. Math.} {\bf 18} (1966), 1243-1250.
\bibitem{IsLe}   I.~M.~Isaacs and M.~L.~Lewis, Derived lengths of solvable groups satisfying the one-prime hypothesis II, {\it Comm. Algebra} {\bf 29} (2001), 2285-2292.
\bibitem{onep} M.~L.~Lewis, Solvable groups having almost relatively prime distinct irreducible character degrees, {\it J. Algebra} {\bf 174} (1995), 197-216.
\bibitem{numone} M.~L.~Lewis, The number of irreducible character degrees of solvable groups satisfying the one-prime hypothesis, {\it Algebr. Represent. Theory}
    {\bf 8} (2005), 479-497.
\bibitem{MaWo} O.~Manz and T.~R.~Wolf, ``Representations of solvable groups,'' Cambridge University Press, Cambridge 1993.
\bibitem{navarro} G.~Navarro, ``Characters and blocks of finite groups,'' Cambridge University Press, Cambridge, 1998.
\bibitem{riedl} J.~M.~Riedl, Fitting heights of solvable groups with few character degrees, {\it J. Algebra} {\bf 233} (2000), 287-308.
\bibitem{scott} W.~R.~Scott, ``Group theory,'' 2nd Edition, Dover Publications, Inc., New York, 1987.

\end{thebibliography}
\end{document}